\documentclass[11pt,letterpaper,reqno]{amsart}
\usepackage{url}
\usepackage{amsthm}
\usepackage{amsmath}
\usepackage{amssymb}
\usepackage{amsfonts}

\usepackage{enumitem}
\usepackage{mathscinet}
\usepackage{lipsum}
\usepackage[english]{babel}
\usepackage[autostyle]{csquotes}
\usepackage{bbold}

\usepackage[english]{babel}

\usepackage[colorlinks=true,
    linkcolor=red,
    citecolor=blue]{hyperref}

\usepackage{graphicx}

\usepackage[utf8]{inputenc}
\usepackage[english]{babel}

\usepackage{color}
\usepackage{comment}

\newtheorem{thm}{Theorem}[section]
\newtheorem{definition}[thm]{Definition}
\newtheorem{theorem}[thm]{Theorem}
\newtheorem{corollary}[thm]{Corollary}
\newtheorem*{oseledec*}{Oseledec Theorem}
\newtheorem{cor}[thm]{Corollary}

\newtheorem{lemma}[thm]{Lemma}
\newtheorem{prop}[thm]{Proposition}

\newtheorem{remark}[thm]{Remark}
\theoremstyle{definition}

\makeatletter
\def\moverlay{\mathpalette\mov@rlay}
\def\mov@rlay#1#2{\leavevmode\vtop{%
   \baselineskip\z@skip \lineskiplimit-\maxdimen
   \ialign{\hfil$\m@th#1##$\hfil\cr#2\crcr}}}
\newcommand{\charfusion}[3][\mathord]{
    #1{\ifx#1\mathop\vphantom{#2}\fi
        \mathpalette\mov@rlay{#2\cr#3}
      }
    \ifx#1\mathop\expandafter\displaylimits\fi}
\makeatother

\let\wh\widehat
\let\wt\widetilde

\newcommand{\nocontentsline}[3]{}
\newcommand{\tocless}[2]{\bgroup\let\addcontentsline=\nocontentsline#1{#2}\egroup}

\usepackage{wasysym}

\def\Jac{\ensuremath{\mathrm{Jac}}}

\def\Hh{\ensuremath{\mathfrak{H}}}

\def\L{\ensuremath{\widehat{\mathcal{L}}}}

\def\Uu{\ensuremath{{\mathcal{U}}}}

\title[Local Limit Theorem and Statistical Properties]{Thermodynamic Formalism Out of Equilibrium Part III: Local Limit Theorem and Statistical Properties}

\begin{document}

\author{S. Ben Ovadia, Y. Hafouta}
\date{}

\begin{abstract}
We study limit theorems for random dynamical systems. We recast the random dynamics via the skew-product of the two-point motion (the Varadhan trick). Using this approach we prove several statistical properties for the system, including quenched central limit theorems with rates, quenched local limit theorem, large deviations, and decay of correlations. Then, in applications we show how to verify the general conditions using spectral properties of appropriate operators. A key application is the study of effectively expanding on average random diffeomorphisms (in any dimension, which are allowed to be dissipative). In particular, this is the first instance of proving local limit theorems in such a broad setting, without imposing restrictions on the dynamics. We provide several new examples.
\begin{comment}
Given a random dynamical system, the two point motion is the skew product generated  by the cartesian square of the  system.
 We prove effective quenched decay of correlations, effective exponential concentration inequalities, central limit theorems with rates and local limit theorems relying on properties of the skew products and the two point motion, reducing the random dynamical system to a deterministic one. All of our results are new already for iid maps, but we provide several non iid examples. Our methods neither  pass through a spectral gap for the random transfer operators, nor through non-invertible dynamics. 
%The $N$-point motion is deterministic dynamical system induced by the original system. We show that exponential mixing for the $N$-point motion for every $N$ implies a plethora of statistical properties for the random system, including the local limit theorem. In particular, our methods do not pass through a spectral gap, nor through non-invertible dynamics.
\end{comment}
\end{abstract}

\newcommand{\Addresses}{{% additional braces for segregating \footnotesize
  \bigskip
  \footnotesize

  S.~Ben Ovadia, \textsc{Einstein Institute of Mathematics, The Hebrew University of Jerusalem, 91904 Jerusalem, Israel}. \\ \textit{E-mail address}: \texttt{Snir.BenOvadia@mail.huji.ac.il}
  
  Y.~Hafouta, \textsc{Department of Mathematics, Ben-Gurion University of the Negev, 8410501 Be'er Sheva, Israel}. \\ \textit{E-mail address}: \texttt{yeor.hafouta@mail.huji.ac.il}
}}

\maketitle

\tableofcontents

\section{Introduction}
Let $(\Sigma,\mathcal F,\mu, T)$ be a probability preserving system. Let $X$ be a measurable space and let $f_\omega:X\to X,\, \omega\in\Sigma$ be measurable maps. A random dynamical system is  formed by fixing  $\omega$ and considering orbits of  point $x$ given by 
$$
x,\, f_\omega x,\, f_{T\omega}\circ f_\omega x,...
$$
This means that the local laws of physics are governed by the orbits $(T^j\omega)_{j\geq0}$.
The system $(\Sigma,\mathcal F,\mu, T)$ is very often referred to as the driving system, and it is interpreted as a random noise or an external force. For instance, one can consider $(\Sigma,\mathcal F,\mu, T)$ as a bigger dynamical system like the sun (vs planet earth) or like the moon when analyzing the motion of the ocean.

Ergodic theory of random dynamical systems has attracted a lot of attention in the past decades, see \cite{Arnold98, Cong97, Crauel2002, Kifer86, LiuQian95, KiferLiu}.
We refer to  the introduction of \cite[Chapter 5]{KiferLiu} for a historical discussion and applications to, for instance, statistical physics, economy and meteorology etc.

Our main interest in the paper is quencehd limit theorems for random dynamical systems.
Let $A:X\to\mathbb R$ be a sufficiently regular function. For a fixed $\omega$ we consider random Birkhoff sums 
\begin{equation}\label{SumsIntro}
S_n^\omega A=\sum_{j=0}^{n-1}A\circ f_{T^{j-1}\omega}\circ\cdots\circ f_{T\omega}\circ f_{\omega}.    
\end{equation} 
Quenched limit theorems are limit theorems for the sequences $S_n^\omega A$, considered as random variables with respect to an appropriate measure $\widehat{\mu}_\omega$ on $X$, where $\omega$ is fixed but comes from a set of probability one.

The literature on statistical properties (i.e. limit theorems) of random dynamical systems  exploded in recent years. In \cite{Cogburn} central limit theorems were studied for Markov chains in random dynamical environment.
In \cite{Kifer1998} central limit theorems were studied for a variety of random dynamical systems, while in \cite{Kifer1996} large deviations were obtained.
In \cite{DavorCMP, DavorTAMS} quenched central limit theorems and large deviations were obtained. Quenched (standardized) Berry-Esseen theorems were obtained in \cite{DH1, HK, YHYT}. The quenched local CLT was studied in  \cite{HK, DavorCMP, DavorTAMS, YHYT}. We stress that most of these results apply to either random expanding transformation $f_\omega$ or small random perturbations of Anosov maps. The main idea in the proofs in the results above is based on properties of the random transfer operators (namely on random Perron-Frobenius theorems). 
See also \cite{Bk95} for results for certain classes of uniformly hyperbolic maps.

Exponential decay of correlations for random hyperbolic dynamical systems has been also studied extensively in literature. Uniform exponential decay of correlations for uniformly expanding maps by now is a classical result. Using symbolic representations the same holds for small random perturbations of Axiom A maps. 
Let us also mention  \cite{Buzzi} for random non-uniformly expanding maps and \cite{ABR} for some classes of random partially hyperbolic dynamical systems. In \cite{Buzzi} there is no regularity on the constant appearing the decay of correlations, and we refer to \cite{Haf1, Haf2} for effective quenched decay of correlations (namely, where these random constants satisfy some integrability conditions).  However, like in the case of the central limit theorem, all of the current results assume some kind of hperbolicity.

Our approach in this manuscript is different. Instead of imposing restrictions like hyperbolicity we use an approach initiated by Varadhan  \cite{Var1,Var2}, and reduce several limit theorems to verifying classical conditions on the level of the skew product 
$$
F_1(\omega,x)=(T\omega,f_\omega x)
$$ 
and the two point motion 
$$
F_2(\omega,x,y)=(T\omega, f_\omega x,f_\omega y)
$$ 
which is the skew product of $f_\omega\times f_\omega$. This approach resembles the one taken in  \cite{ALS,ANV,DimaJonEquivalence, DKK04}, 
when $(\Sigma,\mathcal F,\mu, T)$ is a Bernoulli shift and $f_{\omega}$ depends only on the $0$-th coordinate. However, our approach does not require the system $(\Sigma,\mathcal F,\mu, T)$ to be  a Bernoulli shift, and even in that case most of our results are new. In particular the local central limit theorem is new.

Our results include central limit theorems for $S_n^\omega A$ with rates, local central limit theorems, effective exponential decay of correlations and effective exponential concentration inequality. We then will verify our general assumptions for five classes of examples.  The first one is iid effectively expanding on average deffeomorphims (which include conservative co-expanding on average iid random diffeomorphims as in \cite{JonDima}). These form a $C^1$-open class random diffeomorphisms, which are allowed to be (very) dissipative. We prove our limits theorems under a decay of correlations assumption for the (not necessarily invariant) volume measure. The second one is iid non-invertible expanding interval maps (c.f. \cite{ANV}).
The third are random hyperbolic maps, with $(\Sigma,\mathcal F,\mu,T)$ being a hyperbolic system (the main novelty is that the maps are not close to a single map). The fourth is random contracting on average diffeomorphisms, with $(\Sigma,\mathcal F,\mu,T)$ being a subshift of finite type (c.f. \cite{Mostly} in the iid case). The last example are Markov chains in random Markovian environment (i.e. $(\Sigma,\mathcal F,\mu,T)$ is a Markov shift).

\section{Preliminaries and main results}\label{Prem}
Let $(\Sigma,\mathcal F, \mu)$ be a probability space and let $T:\Sigma\to\Sigma$ be a measurable probability preserving transformation. Let us consider a measurable space $X$ and let $X_\omega,\omega\in\Sigma$ be random subsets of $X$.  We  assume that the set 
$$
\mathcal E=\{(\omega,x): \omega\in\Sigma, x\in X_\omega\}\subset \Sigma\times X
$$
is a measurable subset of $\Sigma\times X$.
For instance we can consider the case when $X_\omega=X$ or when $X$ is a metric space and $X_\omega$ are random closed subsets. 
 
Let $f_\omega:X_\omega\to X_{T\omega}$ be maps such that the skew product $F_1:\mathcal E\to\mathcal E$ given by 
$$
F_1(\omega,x)=(T\omega, f_\omega x)
$$
is measurable. Denote 
$$
\mathcal E_2=\{(\omega,x,y): \omega\in\Sigma,\, x,y\in\mathcal E_\omega\}\subset \Sigma\times X\times X.
$$
Recall that the two point motion is the map $F_2:\mathcal E_2\to \mathcal E_2$ given by 
$$
F_2(\omega,x,y)=(T\omega, f_\omega x, f_\omega y).
$$
This is the skew product corresponding to the maps $f_\omega\times f_\omega$.
Henceforth, we fix some probability measure $\wh \nu$ on $\mathcal E$ 
and write
$$
\wh \mu=\int \widehat{\mu}_\omega d\mu(\omega)
$$
where $\widehat{\mu}_\omega$ is a probability measure on $X_\omega$.
Note that when $\wh \mu$ is
invariant under $F_1$ then $(f_\omega)_*\widehat{\mu}_\omega=\mu_{T\omega}$ for $\mu$-a.a. $\omega$. Denote $\wh \mu^{(2)}=\int \wh{\mu}_{\omega}\times\widehat{\mu}_\omega\,d\mu(\omega)$. Note that when $\widehat{\mu}$ is $F_1$-invariant then $\wh \mu^{(2)}$ is an $F_2$ invariant probability measure.

 Given a measurable function $A:X\to\mathbb R$ our goal is to prove limit theorems for partial sums of the form 
$$
S_n^\omega A=\sum_{j=0}^{n-1}A\circ f_{T^{j-1}\omega}\circ\cdots\circ f_{T\omega}\circ f_{\omega}
$$
considered as random variables on the probability space $(X_\omega,\widehat{\mu}_\omega)$, where $\omega$ belongs to a measurable set $\Gamma$ such that $\mu(\Gamma)=1$. It is also convenient to lift $A$ to $\mathcal E$ and write $A(\omega,x)=A(x)$ and 
$$
S_n A=\sum_{j=0}^{n-1}A\circ F_1^j.
$$
Then 
$$
S_n^\omega(\cdot)=S_n A(\omega,\cdot).
$$
Denote $\wh A(x,y)=A(x)-A(y)$, which is a function on $X\times X$, and lift it to a function on 
$\mathcal E_2$ by setting $\wh A(\omega,x,y)=\wh A(x,y)$. Denote also 
$$
S_n \wh A=\sum_{j=0}^{n-1}\wh A\circ F_2^j.
$$
Then, our approach in proving statistical properties of $S_n^\omega A$ is to reduce them the certain standard assumptions on the characteristic functions of $S_n A$ and $S_n \wh A$.

\subsection{Central limit theorem (CLT) with rates}\label{CLT}
\begin{definition}[Quenched CLT]
We say that $S_n^\omega A$ obeys the quenched CLT (with deterministic centering) if there is $b>0$ such that for $\mu$-a.a. $\omega$ for all $t\in\mathbb R$,
$$
\lim_{n\to\infty}\widehat{\mu}_\omega(\{x: (bn)^{-1/2}S_n^\omega A(x)\leq t\})=\Phi(t):=\frac{1}{\sqrt{2\pi}}\int_{-\infty}^t e^{-x^2/2}dx.
$$
\end{definition}
Next, let 
$$
\varphi_n(t)=\int e^{it S_n\wh A/\sqrt n}d\widehat{\mu}^{(2)}, \,\,\phi_n(t)=\int e^{it S_nA/\sqrt n}d\widehat{\mu}
$$
and 
$$
\varphi_{\omega,n}(t)=\int e^{it S_n^\omega  A/\sqrt n}d\widehat{\mu}_\omega.
$$

\begin{theorem}[Quenched CLT with rates]\label{BE}
 Suppose that there exist $\eta>\zeta>0$, $c>0$ and $0\leq a\leq 1$ such that 
 \begin{equation}\label{CF1}
 \int_{0\leq |t|\leq T_n}\frac{|\varphi_n(t)-e^{-t^2}|}{|t|^a}dt=O(n^{-\eta})    
 \end{equation}
 and 
 \begin{equation}\label{CF2}
\int_{0\leq |t|\leq T_n}\frac{|\phi_n(t)-e^{-t^2/2}|}{|t|^a}dt=O(n^{-\eta})
 \end{equation}
 for some sequence increasing sequence $T_n>0$ such that $T_n=O(n^{\zeta})$. Take $\alpha>\frac{1}{\eta-\zeta}$ and then take $r$ such that $\alpha(\eta-\zeta)>1+r$. Set $\epsilon=\alpha(\eta-\zeta)-1-r$. Assume also that there exist $\delta_0<1/\alpha$ such that  $\|S_n^\omega A-S_k^\omega\|_{L^1}=O(n^{\delta_0/2}(n-k)^{1/2})$, for all $k<n$, $\mu$-a.a. $\omega$.
 
 Then for every $0<\delta<\epsilon/2$  for $\mu-$almost all $\omega$ we have 
 $$
\int_{|t|\leq T_n}\frac{|\varphi_{\omega,n}(t)-e^{-t^2/2}|}{|t|^a}dt=O(n^{-(2-a)r\alpha+\delta_0/2})+
O(n^{-\frac{1}{2\alpha}+(2-a)\zeta})
$$
$$
+O(n^{-\delta/\alpha})+O(n^{-1/(2\alpha)+\delta_0/2}):=O(n^{-\beta}).
 $$
 In particular, if $a=1$ then 
 $$
\sup_{t\in\mathbb R}|\widehat{\mu}_\omega\{S_n^\omega A\leq t\sqrt n\}-\Phi(t)|=O(n^{-\beta}+T_n^{-1}).
 $$
\end{theorem}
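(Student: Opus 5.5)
The plan is to run the Varadhan second-moment argument on characteristic functions, along a polynomial subsequence, and then interpolate. Fix $t$ and set $Z_n(\omega,t)=\varphi_{\omega,n}(t)=\int e^{itS_n^\omega A/\sqrt n}d\widehat{\mu}_\omega$. By Fubini,
$$\int Z_n\,d\mu=\phi_n(t), \qquad \int |Z_n|^2 d\mu=\int\!\!\int e^{it(S_n^\omega A(x)-S_n^\omega A(y))/\sqrt n}\,d\widehat{\mu}_\omega(x)d\widehat{\mu}_\omega(y)d\mu(\omega)=\varphi_n(t).$$
Hence
$$\int |Z_n-e^{-t^2/2}|^2d\mu=\varphi_n(t)-e^{-t^2}-2\,\mathrm{Re}\big(e^{-t^2/2}(\phi_n(t)-e^{-t^2/2})\big),$$
so that
$$\int|Z_n(\omega,t)-e^{-t^2/2}|^2d\mu\le |\varphi_n(t)-e^{-t^2}|+2|\phi_n(t)-e^{-t^2/2}|.$$
Dividing by $|t|^a$ and integrating over $|t|\le T_n$, \eqref{CF1} and \eqref{CF2} give
$$\mathbb E_\mu\int_{|t|\le T_n}\frac{|Z_n-e^{-t^2/2}|^2}{|t|^a}dt=O(n^{-\eta}).$$
To return to the $L^1$-type quantity in $t$, I will use Cauchy--Schwarz with weight $|t|^{-a}$ over $[-T_n,T_n]$. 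This produces a factor $(\int_{|t|\le T_n}|t|^{-a})^{1/2}=O(T_n^{(1-a)/2})$. Any extra polynomial loss in $T_n=O(n^\zeta)$ is what explains the $\eta-\zeta$ appearing in the statement.

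Next I pass to the subsequence $n_k=\lfloor k^\alpha\rfloor$. With $I_n(\omega)=\int_{|t|\le T_n}|Z_n-e^{-t^2/2}||t|^{-a}dt$, Markov/Chebyshev gives $\mu(I_{n_k}>n_k^{-r'})\le C n_k^{-(\eta-\zeta)+2r'}$ or similar. The exponents in the statement, $\alpha(\eta-\zeta)>1+r$ with $\epsilon=\alpha(\eta-\zeta)-1-r$, are exactly what make $\sum_k k^{-\alpha(\eta-\zeta)+r}<\infty$. Borel--Cantelli then yields, for $\mu$-a.e. $\omega$ and all large $k$, a bound $I_{n_k}(\omega)=O(n_k^{-\delta/\alpha})$ for $\delta<\epsilon/2$; the factor $1/2$ comes from the square root. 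This produces the $O(n^{-\delta/\alpha})$ term.

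Interpolation between $n_k\le n<n_{k+1}$ is the main obstacle. Here $n_{k+1}-n_k=O(k^{\alpha-1})=O(n^{1-1/\alpha})$. I will write
$$|\varphi_{\omega,n}(t)-\varphi_{\omega,n_k}(t)|\le |t|\,\Big\|\frac{S_n^\omega A}{\sqrt n}-\frac{S_{n_k}^\omega A}{\sqrt{n_k}}\Big\|_{L^1(\widehat{\mu}_\omega)},$$
and split the difference into $(S_n^\omega A-S^\omega_{n_k}A)/\sqrt n$ and $S_{n_k}^\omega A(n^{-1/2}-n_k^{-1/2})$. The hypothesis $\|S_n^\omega A-S_k^\omega A\|_{L^1}=O(n^{\delta_0/2}(n-k)^{1/2})$ bounds the first piece by $n^{\delta_0/2}n^{-1/(2\alpha)}$. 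Applying the same hypothesis with $k=0$ bounds the second piece by $n^{\delta_0/2}\cdot n^{1/2}\cdot n^{-1}\cdot n^{1-1/\alpha}$, which is of the same order or smaller. Multiplying by $|t|^{1-a}$ and integrating up to $T_n$ produces the factor $T_n^{2-a}=O(n^{(2-a)\zeta})$. This gives the terms $O(n^{-1/(2\alpha)+(2-a)\zeta})$ (using $\delta_0<1/\alpha$) and $O(n^{-1/(2\alpha)+\delta_0/2})$. The Gaussian term $e^{-t^2/2}$ needs no interpolation, and the window change from $T_{n_k}$ to $T_n$ is absorbed in the same way.

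A residual term of the form $O(n^{-(2-a)r\alpha+\delta_0/2})$ arises from handling the small-$|t|$ region, or from choosing Chebyshev thresholds at level $n_k^{-r}$. I would track it by choosing thresholds carefully; I expect that matching the exact exponents is the fiddly part. Finally, for $a=1$ the Kolmogorov-distance bound follows from Esseen's smoothing inequality
$$\sup_t|F-\Phi|\le C\int_{|t|\le T}\frac{|\varphi-e^{-t^2/2}|}{|t|}dt+C/T$$
with $T=T_n$.
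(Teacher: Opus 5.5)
Your overall architecture matches the paper's. Both use the identity $\int|\varphi_{\omega,n}(t)-e^{-t^2/2}|^2\,d\mu=\varphi_n(t)+e^{-t^2}-2e^{-t^2/2}\mathrm{Re}\,\phi_n(t)$, then Markov plus Borel--Cantelli along $n\approx m^\alpha$, interpolation through the $L^1$ hypothesis, and Esseen's inequality at the end.

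The gap is in your Cauchy--Schwarz step. You bound $I_n(\omega)^2$ by $\int_{|t|\le T_n}|t|^{-a}\,dt$ times the weighted second moment, and claim the factor $(\int_{|t|\le T_n}|t|^{-a}dt)^{1/2}$ is $O(T_n^{(1-a)/2})$. For $a=1$, however, $\int_{|t|\le T_n}|t|^{-1}\,dt=\infty$. So your bound is vacuous exactly in the case needed for the Kolmogorov-distance conclusion. Tuning the Chebyshev thresholds will not fix this. Averaged second-moment information alone cannot control the singularity of $|t|^{-1}$ at $t=0$; some pathwise regularity of $\varphi_{\omega,n}$ at $0$ has to enter.

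The missing idea is to cut the $t$-range at a small threshold $\epsilon$; the paper takes $\epsilon_m=m^{-r}$ along the subsequence. This split is exactly where the term $O(n^{-(2-a)r\alpha+\delta_0/2})$ that you could not place comes from.
\begin{itemize}
\item On $\epsilon\le|t|\le T$, apply Cauchy--Schwarz and then Markov/Borel--Cantelli as you planned. The paper uses $|t|^{-2a}\le\epsilon^{-a}|t|^{-a}$ and pays a factor $2T\epsilon^{-a}$; your weighted version would pay only $2\log(T/\epsilon)$ at $a=1$.
\item On $|t|\le\epsilon$, no probability is used. The hypothesis with $k=0$ gives, for a.e. $\omega$, $|\varphi_{\omega,n}(t)-1|\le|t|\,\|S_n^\omega A\|_{L^1(\widehat{\mu}_\omega)}/\sqrt n\le C_\omega n^{\delta_0/2}|t|$. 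Together with $|1-e^{-t^2/2}|\le t^2/2$, the integrand is $O(n^{\delta_0/2}|t|^{1-a})$, so this region contributes $O(n^{\delta_0/2}\epsilon^{2-a})$ for every $a\le 1$.
\end{itemize}
For $0\le a<1$ your full-window weighted Cauchy--Schwarz is fine and even avoids the split. This includes $a=0$, the case used in Theorem~\ref{LLT1}.

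There is also a smaller arithmetic slip in the interpolation. Since $|n^{-1/2}-n_k^{-1/2}|\asymp n^{-3/2}(n-n_k)$, the second piece is $n^{\delta_0/2}\cdot n^{1/2}\cdot n^{-1/2-1/\alpha}=n^{\delta_0/2-1/\alpha}$. The product you wrote equals $n^{\delta_0/2+1/2-1/\alpha}$, which would not be of the claimed order.
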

\begin{remark}\label{Rem1}
In applications, conditions \eqref{CF1} and \eqref{CF2} can be verified through spectral gap assumptions of complex perturbations of appropriate operators. Yet we decided to state Theorem \ref{BE} under these weaker conditions.   
\end{remark}

To see when the growth conditions on $\|S_{n}^\omega A-S_{k}^\omega A\|_{L^1(\wh\mu_\omega)}$ are satisfied we have the following result.

\begin{prop}\label{CondProp}
Suppose either that the measure $\wh\mu$ is $F_1$-invariant or that $\widehat{\mu}_\omega$ and $X_\omega$ do not depend on  $\omega$. In both cases assume that $\nu(A)=0$ where $\nu$ is the projection of $\wh\mu$ on $X$. 
\vskip0.1cm
%(i) Let $k\in\mathbb N$ and let 
%$$
%F_{2k}(\omega,x_1,...,x_{2k})=(T\omega, f_{\omega}x_1,...,f_{\omega}x_{2k})
%$$
%be the $2k$-point motion. Let $\wh\mu^{(2k)}_\omega=\mu_\omega^{\otimes 2k}$ and $\wh\mu^{(2k)}=\int\wh\mu^{(2k)}_\omega\, d\mu(\omega)$. Denote $A_i(x_1,...,x_{2k})=A(x_i)$.
%Then for all $\delta>0$ we have 
%$$
%\wh{\mu}_\omega(S_n^\omega A)=O(n^{1/2+1/(2k)+\delta}), \mu-a.s.
%$$
%if there exists $\rho\in(0,1), C>0$ such that for all $j_1\leq j_2\leq \ldots \leq j_{2k}$ with at least one $j_i$ appearing only once 
%we have
%$$
%\left|\mathbb E_{\wh\mu^{(2k)}}\left[\prod_{i=1}^{2k}A_i\circ F_{2k}^{j_i}\right]\right|\leq C\rho^{L}
%$$
%where $L$ is the maximum of the distance of $j_i$ to the set $\{j_\ell: \ell\not=i\}$ for $i$'s such that $j_i$ appears only once.
If $A$ is bounded then $\|S_n^\omega A-S_k^\omega A\|_{L^2(\wh\mu_\omega)}=O(n^{\delta}(n-k)^{1/2})$  ($\mu$-almost surely) for all $\delta>0$ if 
$$
|\mathrm{Cov}_{\wh\mu^{(2)}}(\widetilde A,\widetilde A\circ F_2^n)|
$$
decays exponentially fast in $n$, where $\widetilde A(x,y)=A(x)A(y)$.
\end{prop}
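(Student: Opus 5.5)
The plan is to control the quenched second moment by rewriting its average over $\omega$ as a correlation of $\widetilde A$ along the two-point motion, and then to upgrade this to an almost sure statement with Borel--Cantelli. For $k\le j\le l$ put
$$
C_{j,l}(\omega)=\int A\circ f^{j}_\omega\cdot A\circ f^{l}_\omega\,d\wh\mu_\omega ,\qquad f^j_\omega=f_{T^{j-1}\omega}\circ\cdots\circ f_\omega .
$$
Then
$$
\|S_n^\omega A-S_k^\omega A\|^2_{L^2(\wh\mu_\omega)}=\sum_{j,l=k}^{n-1}C_{j,l}(\omega)\le \sum_{j,l=k}^{n-1}|C_{j,l}(\omega)| .
$$

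\textbf{Step 1: the Varadhan trick for the squares.} Squaring $C_{j,l}$ and writing the square as an integral over two independent copies gives
$$
C_{j,l}(\omega)^2=\int \widetilde A\circ F_2^{j}\cdot\widetilde A\circ F_2^{l}\,d\wh\mu^{(2)}_\omega ,
$$
and hence
$$
\int C_{j,l}^2\,d\mu=\int \widetilde A\circ F_2^{j}\cdot \widetilde A\circ F_2^{l}\,d\wh\mu^{(2)} .
$$
In the invariant case, $F_2$-invariance of $\wh\mu^{(2)}$ reduces this to $\int \widetilde A\cdot\widetilde A\circ F_2^{l-j}\,d\wh\mu^{(2)}$. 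The hypothesis then gives
$$
\int C_{j,l}^2\,d\mu\le C\rho^{\,l-j}+\Big(\int\widetilde A\,d\wh\mu^{(2)}\Big)^2 .
$$
In the case where $\wh\mu_\omega=\nu$ and $X_\omega$ are fixed, I would read the decay hypothesis as applying uniformly along the orbit (i.e. to $\mathrm{Cov}(\widetilde A\circ F_2^j,\widetilde A\circ F_2^l)$) and proceed in the same way.

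\textbf{Step 2: removing the mean term (expected main obstacle).} Note that $\int\widetilde A\,d\wh\mu^{(2)}=\int Z_0^2\,d\mu$, where $Z_0(\omega)=\wh\mu_\omega(A)$ is the quenched mean. In the constant-fiber case $Z_0\equiv\nu(A)=0$, so the mean term vanishes. In the invariant case only $\int Z_0\,d\mu=\nu(A)=0$ is given, and I expect this to be the delicate point. The first thing I would try is to use the decay of $\mathrm{Cov}(\widetilde A,\widetilde A\circ F_2^n)$ itself to force $Z_0=0$ a.s. (equivalently, a vanishing mean term) from $\nu(A)=0$. Failing that, I would split off the centering $\sum_j Z_{j}$ and treat it separately. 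From here on I assume the bound $\int C_{j,l}^2\,d\mu\le C\rho^{\,l-j}$.

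\textbf{Step 3: almost sure control.} Fix a large $K$. Pairs with $l-j\le K\log n$ are bounded trivially by $|C_{j,l}|\le\|A\|_\infty^2$; there are at most $(n-k)K\log n$ of them. For pairs with $m=l-j>K\log l$, Chebyshev gives
$$
\mu\big(|C_{j,l}|>\rho^{m/4}\big)\le C\rho^{m/2}.
$$
Summing over such pairs yields
$$
\sum_l\sum_{m>K\log l}\rho^{m/2}\lesssim\sum_l l^{-K|\log\rho|/2}<\infty
$$
once $K$ is large. By Borel--Cantelli, for $\mu$-a.e. $\omega$ all but finitely many such pairs satisfy $|C_{j,l}|\le\rho^{m/4}$. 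The remaining finitely many pairs contribute an $\omega$-dependent constant. Since $\log l\le\log n$, every pair with $l-j>K\log n$ is of this type, and summing $\rho^{m/4}$ over $m$ gives $O(n-k)$. Altogether
$$
\|S_n^\omega A-S_k^\omega A\|^2_{L^2(\wh\mu_\omega)}=O\big((n-k)\log n\big)
$$
almost surely. This is $O\big(n^{2\delta}(n-k)\big)$ for every $\delta>0$, which is the claim.
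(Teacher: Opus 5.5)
The identity in Step 1 and the Borel--Cantelli argument in Step 3 are correct. Step 2, however, is a genuine gap, and no argument can close it under the hypotheses as stated. Passing from covariance decay to $\int C_{j,l}^2\,d\mu\le C\rho^{l-j}$ requires the product of means $\int\widetilde A\circ F_2^j\,d\widehat\mu^{(2)}\cdot\int\widetilde A\circ F_2^l\,d\widehat\mu^{(2)}$ to vanish, i.e.\ quenched centering. Neither of your two routes can supply this, because the statement fails without it.

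\emph{Invariant case.} Take constant maps $f_\omega\equiv g(T\omega)$ and $\widehat\mu_\omega=\delta_{g(\omega)}$, with $A\circ g\in\{\pm1\}$ and $\int A\circ g\,d\mu=0$. Then $\widehat\mu$ is $F_1$-invariant, $\nu(A)=0$, and $\widetilde A\equiv1$ $\widehat\mu^{(2)}$-a.e., so every covariance vanishes. Yet $S_n^\omega A=\sum_{j<n}A(g(T^j\omega))$ is just a Birkhoff sum over the base, about which nothing is assumed; for $T$ the identity on a two-point space with uniform $\mu$ it equals $\pm n$. So covariance decay cannot force $Z_0=0$, and there is nothing available to control $\sum_j Z_j$.

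\emph{Constant-fiber case.} Your treatment has the same defect. The mean term vanishes only for $j=0$; for $j\ge1$ it is $\int\nu(A\circ f_\omega^j)^2\,d\mu$, and $\nu$ is not preserved. For instance, take $X=\{p,q\}$, $A(p)=1=-A(q)$, $\nu$ uniform and $f_\omega\equiv p$. Then $\widetilde A\circ F_2^j\equiv1$ for $j\ge1$, so even your strengthened covariances $\mathrm{Cov}(\widetilde A\circ F_2^j,\widetilde A\circ F_2^l)$ vanish for all $l>j$, while $S_n^\omega A-S_1^\omega A\equiv n-1$.

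What is actually needed, in both cases, is the uncentered bound $\int C_{j,l}^2\,d\mu\le C\rho^{l-j}$. This holds, for example, under $F_1$-invariance together with $\widehat\mu_\omega(A)=0$ for $\mu$-a.e.\ $\omega$. The paper's proof makes exactly this tacit assumption in two places:
\begin{itemize}
\item it writes $\int C_n^2\,d\mu=\int\widetilde A\cdot(\widetilde A\circ F_2^n)\,d\widehat\mu^{(2)}=O(\zeta^n)$, treating the uncentered correlation as the covariance;
\item it later passes through $C_n(T^k\omega)$, which presupposes $(f^k_\omega)_*\widehat\mu_\omega=\widehat\mu_{T^k\omega}$.
\end{itemize}
So the missing piece is a missing hypothesis, not a missing idea, and you were right to flag it.

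Granting that bound, your Step 3 is correct and genuinely differs from the paper's route. The paper applies Borel--Cantelli only to the single-lag correlations $C_n(\omega)$ and packages the result as $|C_n(\omega)|\le K_2(\omega)e^{-cn}$ with $K_2\in L^p(\mu)$. It then transports this along the orbit via $C_{j,l}(\omega)=C_{l-j}(T^j\omega)$ and $K_2(T^j\omega)=O(j^{1/p})$, obtaining $O(n^{1/p}(n-k))$ for the second moment. You instead apply Borel--Cantelli directly to the two-parameter family of pairs with lag exceeding $K\log l$ and bound the short lags trivially. This has three advantages:
\begin{itemize}
\item it gives the sharper bound $O((n-k)\log n)$;
\item it avoids the integrability bookkeeping for a random constant;
\item it never uses the translation identity, so it works verbatim whenever the pairwise bounds on $\int C_{j,l}^2\,d\mu$ are available, stationary or not.
\end{itemize}
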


%{\color{red}
%\begin{corollary}[Spectral gap CLT]
%Suppose that there exist operator measure...and operator $\mathcal L_t$...    
%\end{corollary}
%}

In \cite{DimaJonEquivalence} the CLT was proven using properties of the two point motion for iid map $(f_{T^j\omega})_{j=0}^\infty$. In the above theorem we will show that a similar approach yields CLT rates without the iid assumption. The rates are far from optimal, but will be heavily used in the proof of the local CLT below (for that we apply the results with $T_n=\ln n$ and $a=0$ and so we can take $\zeta$ arbitrarily small).

Let us note that as opposed to the case of random small perturbations of Anosov maps \cite{DavorTAMS} and expanding maps $f_\omega$ \cite{HK}, the CLT holds without fiberwise centering, and instead in applications we will only need that $\int A\,d\widehat{\mu}=\int\widehat{\mu}_\omega(A)d\mu(\omega)=0$. Also, we are able to get rates with the usual $\sqrt{n}$ normalization instead of the random normalization by $\sqrt{\mathrm{Var}_{\widehat{\mu}_\omega}(S_n^\omega A)}$. The price to pay is that the rates are suboptimal.

Finally, when $a=1$ the theorem becomes effective when $\zeta$ is small. Note also that when the skew product and the two point motion have spectral gap (or the annealed operators in the iid case) then we can take $a=1$ and $\eta=1/2$ as long as $\zeta<\frac12$.

\subsection{The local CLT (LLT)}\label{LCLT}
The CLT concerns the asymptotic behavior of probabilities of the form $\widehat{\mu}_\omega(S_n^\omega A\in I_n)$ for intervals $I_n$ of size of order $\sqrt n$. The LLT concerns the asymptotic behavior  of probabilities of the form $\widehat{\mu}_\omega(S_n^\omega A\in I)$ for intervals $I$ of unit size.
The LLT  has origins in the De Moivre-Laplace theorem. In fact, the early calculation of De Moivre (1738) seem to be the fist work in which the CLT appears, so in a sense the LLT precedes the CLT itself. The LLT has applications to various areas of mathematics including mathematical physics
\cite{BLRB, DN16, DN-Mech, DT77, EM22, Kh49, LPRS, PS23}, number theory \cite{ADDS, Beck10, DS17}, 
geometry (\cite{LL15}), PDEs \cite{HKN}, and combinatorics (\cite{Can76,BR83, GR92, Hw98}).
Applications to dynamics include  abelian covers (\cite{BFRT, CaRa, DNP, OP19}),
suspensions flows (\cite{DN-llt}), skew products (\cite{Br05, DSL16, DDKN1, DDKN2, LB06}), and homogeneous dynamics (\cite{BeQu, BRLLT05, BRLLT23, Guiv2015, Hough2019}).

Consider partial sums $S_n^\omega A$ of the form \eqref{SumsIntro}. Let $\widehat{\mu}_\omega$ be a random measure on $X$ and let $\widehat{\mu}=\int \widehat{\mu}_\omega d\mu(\omega)$ and $\widehat{\mu}^{(2)}=\int \widehat{\mu}_\omega\times\widehat{\mu}_\omega d\mu(\omega)$.

\begin{definition}[Quenched LLT]
We say  that $S_n^\omega A$ obeys the  LLT (with deterministic centreing) with local Radon measure $\nu$, arithmeticity set $R$ and variance $a>0$ if $\mu$-almost surely
for every continuous function $G:\mathbb R\to \mathbb R$ with compact support or an indicator of a bounded interval we have
$$
\lim_{n\to\infty}\sup_{u\in R}\left|\sqrt{2a\pi n}\int G(S_n^\omega A-u)d\widehat{\mu}_\omega-e^{-\frac{u^2}{2an}}\int G\,d\nu\right|=0.
$$
\end{definition}
In general, the optimal CLT uniform rate is $O(n^{-1/2})$. For a fixed interval  length, the LLT provides the right correction term to the optimal CLT rates, with error term of size $o(n^{-1/2})$.

%{\color{red}
%\begin{corollary}[Spectral gap non-lattice LLT]
%Suppose that there exist operator measure...and operator $\mathcal %L_t$...    
%\end{corollary}
%}

The next result is a quenched LLT.
Let $\pi_1:\Sigma\times X\to X$ be the natural projection. Define $S_n:\Sigma\times X\to\mathbb R$ by
$$
S_n A=\sum_{j=0}^{n-1}A\circ \pi_1\circ F_1^j.
$$
Denote $\wh A(x,y)=A(x)-A(y)$, $\pi_2(\omega,x,y)=(x,y)$ and $S_n \wh A:\Sigma:\times X\times X\to\mathbb R$ by
$$
S_n \wh A=\sum_{j=0}^{n-1}\wh A\circ\pi_2\circ F_2^j.
$$
As before, let 
$$
\varphi_n(t)=\int e^{it S_n\wh A/\sqrt n}d\widehat{\mu}^{(2)}
$$
and
$$
\phi_n(t)=\int e^{it S_nA/\sqrt n}d\widehat{\mu}.
$$
Then the classical Fourier approach for proving  CLT rates and the local CLT for both $S_n A$ and $S_n \wh A$ passes through certain asymptotic conditions on $\varphi_n(t)$ and $\phi_n(t)$. In the theorem below  we will show that the same conditions are, in fact, sufficient for the quenched CLT and the quenched local CLT.
%Let $\mathcal L_0$ and $\widehat{\mathcal L}_0$ be the duals of the Koopman operators $H\to H\circ F$ and $H\to H\circ F_2$ with respect to the measures $\widehat{\mu}$ and $\widehat{\mu}^{(2)}$, respectively. Let set $L_t(H)=L(e^{itA}H)$ and $\widehat{\mathcal L}_t(H)=\widehat{\mathcal L}_0(e^{it\widehat A}H)$. 
%The the asymptotic conditions conditions are satisfied when these operators are $C^3$ in $t$, they are quasi compact and $L_0$ and $\widehat{\mathcal L}_0$ have a spectral gap with multiplicity $1$ (see \cite{HH}).

\begin{theorem}[Non-lattice LLT]\label{LLT1}
Let the conditions of Theorem \ref{BE} be in force with $a=0,1$ and $T_n=\ln n$.  Assume also that there exist constants $\delta\in(0,1)$, $C,c>0$ and $a>2$ such that for all $\theta\in[-\delta,\delta]$,
\begin{equation}
\left|\int e^{i\theta S_n\wh A}d\widehat{\mu}^{(2)}\right|\leq Ce^{-c\theta^2 n}
\end{equation}
 and  for every $N\in\mathbb N$, 
\begin{equation}
\int_{\delta\leq |\theta|\leq N}\left|\int e^{i\theta S_n\wh A}d\widehat{\mu}^{(2)}\right|d\theta=O(n^{-a}).     
\end{equation}
Then for every continuous function $G$ with compact support or an indicator of a bounded interval we have
$$
\lim_{n\to\infty}\sup_{u\in\mathbb R}\left|\sqrt{2\pi n}\int G(S_n^\omega A-u)d\widehat{\mu}_\omega-e^{-\frac{u^2}{2n}}\int G(x)dx\right|=0
$$
$\mu$-almost surely. 
\end{theorem}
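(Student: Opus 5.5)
Throughout, $G$ is first taken to have compactly supported Fourier transform $\hat G$; the general continuous compactly supported $G$, and indicators of bounded intervals, are then recovered by the standard sandwiching argument (Breiman's approximation with $\hat G$ compactly supported). For such $G$, Fourier inversion gives
$$
\sqrt{2\pi n}\int G(S_n^\omega A-u)\,d\widehat{\mu}_\omega=\frac{\sqrt{2\pi n}}{2\pi}\int \hat G(\theta)e^{-i\theta u}\,\Psi_{\omega,n}(\theta)\,d\theta,
\qquad
\Psi_{\omega,n}(\theta)=\int e^{i\theta S_n^\omega A}\,d\widehat{\mu}_\omega .
$$
I split the $\theta$-integral into three ranges: $|\theta|\le \ln n/\sqrt n$, $\ln n/\sqrt n\le|\theta|\le\delta$, and $\delta\le|\theta|\le N$, where $\mathrm{supp}\,\hat G\subset[-N,N]$. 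The main term comes from the first range. Substituting $\theta=t/\sqrt n$ turns it into $\frac{1}{2\pi}\int_{|t|\le\ln n}\hat G(t/\sqrt n)e^{-itu/\sqrt n}\varphi_{\omega,n}(t)\,dt$. Theorem \ref{BE} with $a=0$ and $T_n=\ln n$ gives $\int_{|t|\le \ln n}|\varphi_{\omega,n}(t)-e^{-t^2/2}|\,dt\to0$ almost surely. Since $\hat G$ is continuous at $0$ with $\hat G(0)=\int G$, this range converges to $\frac{1}{2\pi}\hat G(0)\int e^{-t^2/2-itu/\sqrt n}dt=\frac{1}{\sqrt{2\pi}}e^{-u^2/2n}\int G$, uniformly in $u$. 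Comparing with the normalisation $\sqrt{2\pi n}$ yields the claimed limit.

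The remaining two ranges must be shown to be $o(n^{-1/2})$ quenched, and this is where the two-point motion enters through the Varadhan trick. The key identity is
$$
|\Psi_{\omega,n}(\theta)|^2=\int e^{i\theta S_n\wh A(\omega,x,y)}\,d(\widehat{\mu}_\omega\times\widehat{\mu}_\omega)(x,y),
$$
and therefore $\int|\Psi_{\omega,n}(\theta)|^2\,d\mu(\omega)=\int e^{i\theta S_n\wh A}\,d\widehat{\mu}^{(2)}$. This quantity is real and nonnegative.

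For the far range $\delta\le|\theta|\le N$, Fubini and the second hypothesis give $\mathbb E_\mu\int_{\delta\le|\theta|\le N}|\Psi_{\omega,n}|^2\,d\theta=O(n^{-a})$ with $a>2$. Markov's inequality and Borel--Cantelli then show that almost surely $\int_{\delta\le|\theta|\le N}|\Psi_{\omega,n}|^2=O(n^{-a+1+\epsilon})$. By Cauchy--Schwarz, $\sqrt n\int|\Psi_{\omega,n}|=O(n^{1/2-(a-1-\epsilon)/2})\to0$, since $a>2$.

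For the middle range, the first hypothesis gives $\mathbb E_\mu|\Psi_{\omega,n}(\theta)|^2\le Ce^{-c\theta^2 n}$. Integrating over $\ln n/\sqrt n\le|\theta|\le\delta$ gives an expectation of $O(n^{-1/2}e^{-c\ln^2 n})$, which is superpolynomially small. The same Borel--Cantelli and Cauchy--Schwarz argument (the range has length at most $2\delta$) makes $\sqrt n\int|\Psi_{\omega,n}|$ over this range tend to $0$ almost surely. All these error bounds are independent of $u$, so the supremum over $u$ is harmless.

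I expect the main obstacle to be the passage from the countable family of test functions to arbitrary $G$ on a single full-measure set of $\omega$. Each exceptional null set above depends on $N$ only, so I would take $N\in\mathbb N$ and intersect over countably many $N$. The approximation of compactly supported continuous functions and interval indicators then goes through the standard upper/lower approximation argument. That argument uses the positivity of the functionals $G\mapsto\sqrt{2\pi n}\int G(S_n^\omega A-u)\,d\widehat{\mu}_\omega$, the a priori bound these functionals obtain on shifted unit intervals, and the assumption that $a=1$ in Theorem \ref{BE} controls tails uniformly. A secondary subtlety is that Theorem \ref{BE} is applied with $T_n=\ln n$ and $a=0$, so $\zeta$ can be taken arbitrarily small and its rate exponent $\beta$ is positive, which suffices for the main-range convergence.
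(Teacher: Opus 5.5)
Your proposal is correct and follows the paper's proof essentially step for step. It uses the same Breiman reduction to $G$ with compactly supported $\wh G$ and handles the central window $|t|\le \ln n$ by Theorem~\ref{BE} with $a=0$. It then treats the windows $\ln n\le |t|\le \delta\sqrt n$ and $\delta\le|\theta|\le N$ through $\int|\Psi_{\omega,n}(\theta)|^2d\mu=\int e^{i\theta S_n\wh A}d\wh\mu^{(2)}$ together with Markov, Borel--Cantelli and Cauchy--Schwarz; these steps are exactly the paper's $I_1,I_2$, $I_3$ and $I_4$.

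Using only continuity of $\wh G$ at $0$ (instead of the paper's H\"older bound) and explicitly intersecting the $N$-dependent null sets before approximating are small refinements rather than a different route. One correction to your obstacle paragraph: the tail control in the approximation step comes from the a priori unit-interval bound, obtained via a positive test function such as $(\sin(\lambda x)/(\lambda x))^2$. It does not come from the $a=1$ CLT, whose rate is too weak at the $\sqrt n$ scale.
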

\begin{remark}
In applications, the conditions of Theorem \ref{LLT1} can be verified through  quasi-compactness  of appropriate complex operators, together with a spectral gap property and irreducibility of the function $\wh A$. Yet we decided to state Theorem \ref{LLT1} under these weaker conditions.   
\end{remark}

The following result complements Theorem \ref{LLT1} and it proven essentially in the same way (see the proof of \cite[Theorem 2.2.3]{HK}).
\begin{theorem}[Lattice LLT]\label{LLT2}
Let conditions of Theorem \ref{BE} are in force with $a=0$ and $T_n=\ln n$.  Assume also that there exist constants $\delta\in(0,1)$, $C,c>0$ and $a>2$ such that 
\begin{equation}
\left|\int e^{i\theta S_n\wh A}d\widehat{\mu}^{(2)}\right|\leq Ce^{-c\theta^2 n}
\end{equation}
for all $\theta\in[-\delta,\delta]$ and
\begin{equation}
\int_{\delta\leq |\theta|\leq \pi}\left|\int e^{i\theta S_n\wh A}d\widehat{\mu}^{(2)}\right|=O(n^{-a}).     
\end{equation}
Then for every continuous function $G$ with compact support or an indicator of a bounded interval we have
$$
\lim_{n\to\infty}\sup_{u\in\mathbb Z}\left|\sqrt{2\pi n}\int G(S_n^\omega A-u)d\widehat{\mu}_\omega-e^{-\frac{u^2}{2n}}\sum_{k\in\mathbb Z}G(k)\right|=0
$$
$\mu$-almost surely.     
\end{theorem}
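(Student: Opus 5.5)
The plan follows the Fourier route used for Theorem \ref{LLT1}, with the continuum of frequencies replaced by the circle $[-\pi,\pi]$. I assume $A$ is integer valued; this is implicit in the statement, since $u\in\mathbb Z$ and the limit is $\sum_k G(k)$. First reduce $G$ to finitely many integer-point evaluations. For an integer valued $S_n^\omega A$,
$$\int G(S_n^\omega A-u)\,d\wh\mu_\omega=\sum_{k\in\mathbb Z} G(k)\,\wh\mu_\omega(S_n^\omega A=u+k).$$
When $G$ has compact support (or is an indicator of a bounded interval) this sum is finite, so it suffices to prove, uniformly in $u\in\mathbb Z$ and $\mu$-a.s.,
$$\sqrt{2\pi n}\,\wh\mu_\omega(S_n^\omega A=m)-e^{-m^2/(2n)}\to0.$$
Replacing $e^{-(u+k)^2/2n}$ by $e^{-u^2/2n}$ costs $o(1)$ uniformly in $u$, because $k$ ranges over a bounded set.

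By Fourier inversion on $\mathbb Z$,
$$\wh\mu_\omega(S_n^\omega A=m)=\frac1{2\pi}\int_{-\pi}^{\pi}e^{-i\theta m}\,\Psi_{\omega,n}(\theta)\,d\theta,\qquad \Psi_{\omega,n}(\theta)=\int e^{i\theta S_n^\omega A}\,d\wh\mu_\omega .$$
Split the integral into $|\theta|\le \ln n/\sqrt n$, $\ln n/\sqrt n\le|\theta|\le\delta$, and $\delta\le|\theta|\le\pi$. On the first region substitute $\theta=t/\sqrt n$. Then $\sqrt{n}$ times the integral equals $\int_{|t|\le\ln n}e^{-itm/\sqrt n}\varphi_{\omega,n}(t)\,dt$. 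Theorem \ref{BE} with $a=0$ and $T_n=\ln n$ replaces $\varphi_{\omega,n}$ by $e^{-t^2/2}$ with $L^1$ error $O(n^{-\beta})$, uniformly in $m$. The remaining Gaussian integral equals $\sqrt{2\pi}e^{-m^2/2n}$ up to a tail $o(1)$, which gives the main term after multiplying by $\sqrt{2\pi n}/(2\pi)$.

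The other two regions must be shown to be $o(n^{-1/2})$ for a.e. $\omega$. The key observation is the Varadhan identity
$$|\Psi_{\omega,n}(\theta)|^2=\int e^{i\theta S_n\wh A(\omega,x,y)}\,d\wh\mu_\omega(x)\,d\wh\mu_\omega(y),$$
so integrating over $\omega$ gives $\int|\Psi_{\omega,n}(\theta)|^2d\mu(\omega)=\int e^{i\theta S_n\wh A}d\wh\mu^{(2)}$. Apply Fubini and Cauchy--Schwarz on the intermediate region:
$$\mathbb E_\mu\int_{\ln n/\sqrt n\le|\theta|\le\delta}|\Psi_{\omega,n}|^2\,d\theta\le C\int_{|\theta|\ge \ln n/\sqrt n}e^{-c\theta^2n}\,d\theta=O\big(n^{-1/2}e^{-c\ln^2 n}\big).$$
On the outer region the corresponding expectation is $O(n^{-a})$ with $a>2$. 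Markov's inequality and Borel--Cantelli then show that, for a.e. $\omega$ and all large $n$,
$$\int_{\text{regions}}|\Psi_{\omega,n}|^2\,d\theta\le n^{-2-\epsilon'}$$
on the outer region, with a super-polynomially small bound on the middle one. Since the region has length at most $2\pi$, Cauchy--Schwarz in $\theta$ gives $\int|\Psi_{\omega,n}|\,d\theta\le\sqrt{2\pi}\,n^{-1-\epsilon'/2}=o(n^{-1/2})$.

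The main obstacle is this passage from annealed bounds on $|\Psi|^2$ to quenched bounds on $|\Psi|$ for every $n$ simultaneously. It is also the reason the hypotheses require $a>2$: after taking square roots the outer contribution must still beat $n^{-1/2}$, and the exceptional sets must remain summable. The middle region is easy because of the Gaussian decay. The other delicate point is that Theorem \ref{BE} is used only on $|t|\le\ln n$, where its a.s. rate is already available; the bounds above are uniform in $m$, which gives the supremum over $u\in\mathbb Z$.
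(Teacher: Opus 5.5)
Your route is essentially the paper's. The paper proves Theorem \ref{LLT2} by rerunning the $I_1$--$I_4$ argument of Theorem \ref{LLT1} with the outer range taken to be $\delta\le|\theta|\le\pi$. That means Theorem \ref{BE} (with $a=0$, $T_n=\ln n$) on $|t|\le\ln n$, and on the remaining frequencies the identity $\int|\Psi_{\omega,n}(\theta)|^2\,d\mu(\omega)=\int e^{i\theta S_n\wh A}\,d\widehat{\mu}^{(2)}$ followed by Cauchy--Schwarz, Markov and Borel--Cantelli. You apply Fourier inversion directly to the point probabilities $\widehat{\mu}_\omega(S_n^\omega A=m)$, instead of first reducing (via Breiman) to $G$ with compactly supported Fourier transform. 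This is the natural simplification in the lattice case, and it gives a single exceptional null set for all $G$ at once.

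There is, however, an exponent slip in the outer region, and as written the step fails when $2<a\le 3$. You have $\mathbb E_\mu\int_{\delta\le|\theta|\le\pi}|\Psi_{\omega,n}|^2\,d\theta=O(n^{-a})$. Markov's inequality at level $n^{-2-\epsilon'}$ with $\epsilon'>0$ then only gives $\mu(\cdot)\le Cn^{2+\epsilon'-a}$. This is summable only if $a>3+\epsilon'$, so with merely $a>2$ Borel--Cantelli does not apply.

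You do not need that much decay. Take the level $n^{-1-\epsilon}$ with $0<\epsilon<a-2$ (equivalently, allow $\epsilon'\in(-1,a-3)$). Then $\sum_n n^{1+\epsilon-a}<\infty$, so almost surely, for all large $n$, $\int_{\delta\le|\theta|\le\pi}|\Psi_{\omega,n}|^2\,d\theta\le n^{-1-\epsilon}$. Cauchy--Schwarz then gives $\int_{\delta\le|\theta|\le\pi}|\Psi_{\omega,n}|\,d\theta\le\sqrt{2\pi}\,n^{-1/2-\epsilon/2}=o(n^{-1/2})$, which is exactly what is needed. This is the paper's choice, and it matches your own heuristic for why $a>2$ is the right threshold. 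With that correction your argument is complete.
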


\begin{remark}\label{Rem2}
 We note that using standard methods (see for example \cite{HH}) the conditions in Theorem \ref{CLT} yield the CLT for the sum $S_nA$ and the conditions in Theorem \ref{LLT1} and \ref{LLT2} with $A$ instead of $\widehat{A}$ yield the corresponding LLT for the sums $S_nA$ (with respect to the measure $\wh\mu$). This is a new result in our applications in Sections \ref{App1}-\ref{App6}.   
\end{remark}
\subsection{Effective quenched decay of correlations}\label{Corr}
Let $\|\cdot\|$ be a norm on functions on $X$.
\begin{theorem}[Eff. quenched DOC]\label{DEC}
Suppose that $\|\cdot\|_\infty\leq \|\cdot\|$.
Assume also that the two point motion is exponentially mixing in the sense that there exist $C>0$ and $\zeta\in(0,1)$ such that for all functions of the form $H(x,y)=h(x)h(y)$ and $G(\omega,x,y)=(g(x)-\widehat{\mu}_\omega(g))(g(y)-\widehat{\mu}_\omega(g))$ with $\|g\|,\|h\|\leq 1$ we have 
$$
\left|\int G\cdot (H\circ F_2^n)d\widehat{\mu}^{(2)}\right|\leq C\zeta^n. 
$$
Then, for all functions $g,h:X\to\mathbb R$  such that  $\|g\|,\|h\|\leq 1$, for every $1\leq p<\infty$ there exist a constant $c_p>0$ and a random variable $R_p\in L^p(\mu)$ we have
$$
\left|\int g\cdot (h\circ f_\omega^n)d\widehat{\mu}_\omega-\left(\int g\,d\widehat{\mu}_\omega\right)\left(\int h\circ f_\omega^n\,d\wh{\mu}_{\omega}\right) \right|\leq R_p(\omega)e^{-c_p n}.
$$
Moreover, suppose that $\wh\mu$ is $F_1$ invariant or $\wh\mu_{\omega}$ and $X_\omega$ does not depend on $\omega$ and $\nu(g)=0$, where $\nu$ is the projection of $\wh\mu$ to $X$. Let $H(x,y)=h(x)h(y)$ and $\tilde G(x,y)=g(x)g(y)$. If
$$
\left|\int \tilde G\cdot (H\circ F_2^n)d\widehat{\mu}^{(2)}\right|\leq C\zeta^n. 
$$
then for all $1\leq p<\infty$ there is a random variable $R_p\in L^p(\mu)$ such that 
$$
\left|\int g\cdot (h\circ f_\omega^n)d\widehat{\mu}_\omega\left(\int h\circ f_\omega^n\,d\wh{\mu}_{\omega}\right) \right|\leq R_p(\omega)e^{-c_p n}.
$$
\end{theorem}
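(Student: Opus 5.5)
Set $D_n(\omega)=\int g\cdot (h\circ f_\omega^n)\,d\widehat{\mu}_\omega-\widehat{\mu}_\omega(g)\,\widehat{\mu}_\omega(h\circ f^n_\omega)$, where $f^n_\omega=f_{T^{n-1}\omega}\circ\cdots\circ f_\omega$. The plan is to show that $\int D_n(\omega)^2\,d\mu(\omega)\le C\zeta^n$ and then turn this second-moment bound into a pathwise bound with random constant via Markov/Borel--Cantelli, or more directly by summing a weighted series.

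\textbf{Step 1 (squaring via the two point motion).} Write $D_n(\omega)=\int (g-\widehat{\mu}_\omega(g))\,(h\circ f^n_\omega)\,d\widehat{\mu}_\omega$. Squaring and using Fubini on $\widehat{\mu}_\omega\times\widehat{\mu}_\omega$ gives
$$
D_n(\omega)^2=\int (g(x)-\widehat{\mu}_\omega(g))(g(y)-\widehat{\mu}_\omega(g))\,h(f^n_\omega x)\,h(f^n_\omega y)\,d(\widehat{\mu}_\omega\times\widehat{\mu}_\omega)(x,y).
$$
Integrating in $\omega$, and noting that $\pi_2\circ F_2^n(\omega,x,y)=(f^n_\omega x,f^n_\omega y)$, we get exactly $\int D_n^2\,d\mu=\int G\cdot(H\circ F_2^n)\,d\widehat{\mu}^{(2)}\le C\zeta^n$ by hypothesis. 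For the second assertion the same computation applies with $\tilde G$ in place of $G$ and with $D_n$ replaced by $\int g\cdot(h\circ f^n_\omega)\,d\widehat{\mu}_\omega$. The centering assumption $\nu(g)=0$ (with invariance or $\omega$-independence) is what makes this the relevant quantity; I read the displayed product in the statement as this integral itself, the centering having been absorbed.

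\textbf{Step 2 (from $L^2$ to a.s. with $L^p$ constant).} Since $\|g\|_\infty,\|h\|_\infty\le1$, we have $|D_n|\le 2$. Hence $\int |D_n|^{q}\,d\mu\le 2^{q-2}C\zeta^n$ for every $q\ge2$. Fix $p$, take $q=p$ (or $q=2$ if $p<2$) and choose $c_p>0$ so small that $e^{pc_p}\zeta<1$. Define
$$
R_p(\omega)=\sup_n e^{c_p n}|D_n(\omega)|.
$$
Then
$$
R_p^p\le\sum_n e^{pc_pn}|D_n|^p,
$$
and when $p\ge2$,
$$
\int R_p^p\,d\mu\le 2^{p-2}C\sum_n (e^{pc_p}\zeta)^n<\infty.
$$
For $p<2$ the bound follows from the $p=2$ case by H\"older's inequality, e.g. with $c_p=c_2$. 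By definition $|D_n(\omega)|\le R_p(\omega)e^{-c_pn}$, which is the claim. Measurability of $R_p$ follows from measurability of each $D_n$ in $\omega$, which in turn comes from measurability of $F_1$ and of the disintegration $\omega\mapsto\widehat{\mu}_\omega$.

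\textbf{Expected obstacle.} The analysis itself is short. The delicate point is Step 1: checking that $G$ is an admissible test function (its centering depends on $\omega$, which the hypothesis explicitly allows) and that the integration against $\widehat{\mu}^{(2)}=\int\widehat{\mu}_\omega\times\widehat{\mu}_\omega\,d\mu$ does produce $D_n^2$. Notably, no invariance is needed for the first assertion. Uniformity of $R_p$ over all $g,h$ in the unit ball is not claimed, since $R_p$ is allowed to depend on $g,h$, so no chaining argument is required.
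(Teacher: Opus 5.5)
Your proof is correct. Step 1 is the paper's identity $\int D_n^2\,d\mu=\int G\cdot(H\circ F_2^n)\,d\wh\mu^{(2)}$. For the second part, your reading of the garbled display as the uncentered integral $\int g\cdot(h\circ f^n_\omega)\,d\wh\mu_\omega$ matches what the paper does in the proof of Proposition \ref{CondProp}, to which it defers.

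Two differences from the paper are worth noting.

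\textbf{The quantity you bound.} You define $D_n$ with $\wh\mu_\omega(h\circ f^n_\omega)$, exactly as in the statement. The paper's $C_n$ is written with $\int h\,d\wh\mu_{T^n\omega}$. That agrees with $\wh\mu_\omega(h\circ f^n_\omega)$ only when $\wh\mu$ is $F_1$-invariant. So your remark that the first assertion needs no invariance is right, and your version is the one for which the squaring identity actually holds in general.

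\textbf{How you pass from the $L^2$ bound to an almost sure bound.} The paper argues in three stages:
\begin{enumerate}
\item Markov's inequality gives $\mu(|C_n|\ge e^{-cn})\le Ce^{2cn}\zeta^n$.
\item Borel--Cantelli produces the last exceedance time $N_2(\omega)$, and the paper sets $K_2=2e^{cN_2}$.
\item It shows $K_2\in L^p$ by bounding the tail $\mu(N_2=k+1)\le\mu(|C_k|\ge e^{-ck})$.
\end{enumerate}
You instead take the smallest admissible random constant $R_p=\sup_n e^{c_pn}|D_n|$. You control its $p$-th moment directly by $\sum_n e^{pc_pn}\int|D_n|^p\,d\mu$, using $|D_n|\le 2$ to upgrade the $L^2$ bound to $L^p$.

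Your route is shorter and avoids the hitting-time bookkeeping. It also gives a somewhat better exponent: $c_p<-\ln\zeta/p$ for $p\ge 2$, versus $c<-\ln\zeta/(2p)$ as the paper's proof is written. The paper's Markov/Borel--Cantelli template is the one it reuses in Proposition \ref{CondProp} and Theorem \ref{Exp}. Your moment-of-the-supremum argument would serve equally well there, since the quantities involved are likewise uniformly bounded.
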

\begin{remark}
Compared with iid case considered in \cite{DimaJonEquivalence} we are able to get much stronger integrability properties on $R_p$, and get the results in a more general non iid framework.
\end{remark}

The conditions of the first result in Theorem \ref{DEC}  involve  the functions $G(\omega,x,y)$. The decay rate on the $F_2$ level basically means that the means $\widehat{\mu}_\omega(g)$ are sufficiently smooth functions of $\omega$. This is clearly the case when $\widehat{\mu}_\omega$ does not depend on $\omega$ (but in this case we can use the second part). For instance $X_\omega$ can all coincide with $X$ which can be a given compact Riemannian manifold. In that case we take $\widehat{\mu}_\omega=\mathrm{Vol}$. However, it is desirable to prove some results when $\widehat{\mu}_\omega$ is truly random, since this is usually the case when $\widehat{\mu}$ is $F$ invariant. Suppose that the measures $\widehat{\mu}_\omega$ are SRB in the sense that $X_\omega=X$ is a compact manifold and for $\mu$-a.a. $\omega$  for every function $g:X\to\mathbb R$ with $\|g\|\leq 1$ we have 
\begin{equation}\label{Cond}
\left|\int g\circ f_{T^{-n}\omega}^nd\mathrm{Vol}-\widehat{\mu}_\omega(g)\right|\leq Ce^{-\lambda n} 
\end{equation}
where $\mathrm{Vol}$ is the normalized volume measure, $C,\lambda>0$ are constants and 
$$
f_{T^{-n}\omega}^n=f_{T^{-1}\omega}\circ\cdots\circ f_{T^{-n}\omega}.
$$
To simplify the arguments it will convenient to assume that $(\Sigma,\mathcal F,\mu,T)$ is a topologically mixing subshift of finite type, $\mu$ is some Gibbs measure (see \cite{Bowen}) and that  $f_\omega$ depends only on the $0$-th coordinate of $\omega$.
\begin{prop}\label{ppp}
 In the above circumstances there are constants $C_0>0$ and $\alpha\in(0,1]$ such that for every function $g$ with $\|g\|\leq 1$ the map $\omega\to\widehat{\mu}_\omega$ is H\"older continuous with exponent $\alpha$ and H\"older constant not exceed $C$. More precisely,
 $$
|\widehat{\mu}_\omega(g)-\wh{\mu}_{\omega'}(g)|\leq 2Ce^{-k\lambda}
 $$
 if $\omega$ and $\omega'$ has the same $j$-th coordinate for $-k\leq j\leq 0$. 
\end{prop}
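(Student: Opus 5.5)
The plan is to use condition \eqref{Cond} as an approximation scheme. Since $f_\omega$ depends only on the $0$-th coordinate $\omega_0$, the composition $f^n_{T^{-n}\omega}=f_{T^{-1}\omega}\circ\cdots\circ f_{T^{-n}\omega}$ depends only on the coordinates $(T^{-j}\omega)_0=\omega_{-j}$ for $1\leq j\leq n$. In particular it depends only on the past block $\omega_{-n},\dots,\omega_{-1}$, and certainly only on the coordinates with indices in $[-n,0]$.

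Suppose $\omega$ and $\omega'$ agree in coordinates $-k\leq j\leq 0$. Then $f^k_{T^{-k}\omega}=f^k_{T^{-k}\omega'}$ as maps of $X$, and so
$$
\int g\circ f^k_{T^{-k}\omega}\,d\mathrm{Vol}=\int g\circ f^k_{T^{-k}\omega'}\,d\mathrm{Vol}.
$$
Applying \eqref{Cond} with $n=k$ at both $\omega$ and $\omega'$ and using the triangle inequality gives
$$
|\widehat{\mu}_\omega(g)-\widehat{\mu}_{\omega'}(g)|\leq \Big|\widehat{\mu}_\omega(g)-\int g\circ f^k_{T^{-k}\omega}d\mathrm{Vol}\Big|+\Big|\int g\circ f^k_{T^{-k}\omega'}d\mathrm{Vol}-\widehat{\mu}_{\omega'}(g)\Big|\leq 2Ce^{-\lambda k}.
$$
This is the displayed estimate. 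One subtlety is that \eqref{Cond} is stated for $\mu$-a.e.\ $\omega$. Since $\mu$ is a Gibbs measure with full support, the good set is dense and has full measure in every cylinder. I would therefore first prove the estimate for pairs $\omega,\omega'$ in the full-measure set where \eqref{Cond} holds. The Lipschitz-type estimate then shows that $\omega\mapsto\widehat{\mu}_\omega(g)$ is uniformly continuous on a dense set, so it extends continuously to all of $\Sigma$. We use this extension as the version of $\widehat{\mu}_\omega$; alternatively, we interpret the claim almost surely.

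For the Hölder statement, recall the standard metric $d_\theta(\omega,\omega')=\theta^{N(\omega,\omega')}$ on the subshift, where $N$ is the largest $m$ such that $\omega_j=\omega'_j$ for $|j|\leq m$. If $d_\theta(\omega,\omega')=\theta^m$, then the two points agree on $[-m,0]$, so the bound is $2Ce^{-\lambda m}=2C\,d_\theta(\omega,\omega')^{\alpha}$ with $\alpha=\min\{1,\lambda/|\ln\theta|\}$. When $\lambda/|\ln\theta|\geq 1$ we can take $\alpha=1$, since $e^{-\lambda m}\leq\theta^m$. If $\omega,\omega'$ do not agree even at coordinate $0$, the trivial bound $|\widehat{\mu}_\omega(g)-\widehat{\mu}_{\omega'}(g)|\leq 2\|g\|_\infty\leq 2$ covers that case after enlarging the constant to $C_0=\max(2C,2)$. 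These constants are uniform over $\|g\|\leq 1$.

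The main obstacle is only the measure-theoretic care just described: \eqref{Cond} holds for almost every $\omega$ rather than every $\omega$, and one needs a continuous version of $\omega\mapsto\widehat{\mu}_\omega$. Full support of the Gibbs measure resolves this. The core estimate itself is a direct two-sided application of \eqref{Cond}.
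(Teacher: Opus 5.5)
Your proof is correct and follows the paper's argument: apply \eqref{Cond} with $n=k$ at both $\omega$ and $\omega'$, and note that the middle term vanishes because $f^k_{T^{-k}\omega}=f^k_{T^{-k}\omega'}$ depends only on coordinates inside $[-k,0]$. Two of your steps are details the paper leaves implicit: passing from the $\mu$-a.e.\ statement to a continuous version using full support of the Gibbs measure, and the explicit exponent $\alpha=\min\{1,\lambda/|\ln\theta|\}$ with $C_0=\max(2C,2)$.
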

\begin{proof}
Let us take two points $\omega,\omega'$ and suppose that $\omega_j=\omega'_j$ for all $-k\leq j\leq 0$.  Then
$$
|\widehat{\mu}_\omega(g)-\mu_{\omega'}(g)|\leq 2Ce^{-k\lambda}+\left|\int g\circ f_{T^{-k}\omega}^{k}d\mathrm{Vol}-\int g\circ f_{T^{-k}\omega'}^{k}d\mathrm{Vol}\right|.
$$
Notice that the second term on the above right hand side vanishes since $f_{T^{-k}\omega}^{k}$ depends only on $\omega_j$ for $-k+1\leq j\leq 0$.
\end{proof}

\subsection{Effective exponential concentration inequalities}
\begin{theorem}\label{Exp}
Suppose that there exist $C,c>0$ such that for every $\epsilon>0$ small enough and all $n\in\mathbb N$ we have 
$$
\widehat{\mu}(S_nA\geq \epsilon n)\leq Ce^{-c\epsilon^2n}.
$$
Then for every $\epsilon>0$ and $1\leq p<\infty$ there exist a random variable $K_p\in L^p(\mu)$  such that for $\mu$-a.a. $\omega$ and all $n\in\mathbb N$ we have
$$
\widehat{\mu}_\omega(S_n^\omega A\geq \epsilon n)\leq K_p(\omega)e^{-\epsilon^2\frac{c}{p+2}n}. 
$$
\end{theorem}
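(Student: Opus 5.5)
Since $\widehat{\mu}=\int\widehat{\mu}_\omega\,d\mu(\omega)$ and $S_nA(\omega,\cdot)=S_n^\omega A$, I would disintegrate the annealed bound and then apply Markov's inequality and Borel--Cantelli to the random variables $Y_n(\omega):=\widehat{\mu}_\omega(S_n^\omega A\geq \epsilon n)\in[0,1]$. Fubini gives
$$
\int Y_n(\omega)\,d\mu(\omega)=\widehat{\mu}(S_nA\geq \epsilon n)\leq Ce^{-c\epsilon^2 n}.
$$
The hypothesis is only available for $\epsilon$ small. For larger $\epsilon$ I would use the monotonicity $Y_n^{(\epsilon)}\leq Y_n^{(\epsilon_0)}$ for a fixed small $\epsilon_0$. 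For bounded $A$ with $\epsilon>\|A\|_\infty$ the event is empty and the claim is trivial. Otherwise, one can read the hypothesis as holding for all $\epsilon$ in the range where it is meaningful; I would flag this as a point to state carefully.

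Next I would define a weighted supremum and show it lies in $L^p$. Fix $p$ and set $\gamma=\frac{c\epsilon^2}{p+2}$, and define
$$
K_p(\omega):=\sup_{n\geq 1} e^{\gamma n}Y_n(\omega)\leq \Big(\sum_{n\geq1} e^{p\gamma n}Y_n(\omega)^p\Big)^{1/p}.
$$
This definition gives the desired inequality $Y_n\leq K_p e^{-\gamma n}$ immediately, for every $n$. It remains to show $K_p\in L^p(\mu)$. Since $0\leq Y_n\leq 1$, we have $Y_n^p\leq Y_n$. Hence
$$
\int K_p^p\,d\mu\leq \sum_n e^{p\gamma n}\int Y_n\,d\mu\leq C\sum_n e^{(p\gamma-c\epsilon^2)n}.
$$
The exponent in the last sum is
$$
p\gamma-c\epsilon^2=c\epsilon^2\Big(\frac{p}{p+2}-1\Big)=-\frac{2c\epsilon^2}{p+2}<0,
$$
so the series converges and $K_p\in L^p(\mu)$. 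Measurability of $K_p$ follows from measurability of $\omega\mapsto \widehat{\mu}_\omega(E)$, which is part of the disintegration.

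This argument actually gives the rate $\frac{c}{p+1}$, and any constant below $\frac{c}{p}$ would work; the stated $\frac{c}{p+2}$ therefore leaves room. Beyond that, there is no real obstacle. The only points needing care are the disintegration and measurability issues above, and the restriction to small $\epsilon$, which is handled by the monotonicity reduction.
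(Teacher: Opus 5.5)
Your argument is correct for every $\epsilon$ at which the annealed hypothesis is available, but it takes a different route from the paper.

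The paper uses the same Borel--Cantelli template as in Theorem~\ref{DEC} and Proposition~\ref{CondProp}:
\begin{itemize}
\item With $\Gamma_{n,\epsilon}(\omega)=\widehat{\mu}_\omega(S_n^\omega A\geq \epsilon n)$, it applies Markov's inequality at the threshold $e^{-d\epsilon^2 n}$ for some $0<d<c$.
\item It takes the minimal random index $N(\omega)$ after which $\Gamma_{n,\epsilon}\leq e^{-d\epsilon^2 n}$, and sets $K=e^{d\epsilon^2 N}$ (using $\Gamma_{n,\epsilon}\leq 1$ for $n<N$).
\item It bounds $\int K^p\,d\mu$ through the tail estimate $\mu(N=k+1)\leq Ce^{-(c-d)\epsilon^2 k}$. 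The resulting series converges when $d(p+1)<c$, and the choice $d=c/(p+2)$ is where the stated exponent comes from.
\end{itemize}

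You instead take the smallest admissible random constant, $K_p=\sup_n e^{\gamma n}Y_n$. You control its $p$-th moment by the $\ell^p$ sum together with $Y_n^p\leq Y_n$. This removes Borel--Cantelli and the first-passage variable altogether and is shorter. It also yields a better exponent: any $\gamma<c\epsilon^2/p$, versus $d\epsilon^2$ with $d<c/(p+1)$ in the paper. Moreover, $c\epsilon^2/p$ is the limit of what a first-moment bound alone can give, as the case where $Y_n$ is an indicator function shows. What the paper's version gives in addition is an eventual bound $\Gamma_{n,\epsilon}\leq e^{-d\epsilon^2 n}$ with constant one for all $n\geq N(\omega)$, where $N$ has exponential tails.

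One correction to your treatment of large $\epsilon$. The monotonicity $Y_n^{(\epsilon)}\leq Y_n^{(\epsilon_0)}$ only gives the exponent $c\epsilon_0^2/(p+2)$, not $c\epsilon^2/(p+2)$. So it does not ``handle'' $\epsilon$ beyond the range of the hypothesis, contrary to your final sentence. The paper's proof has exactly the same tacit restriction, since it simply plugs the fixed $\epsilon$ into the hypothesis. This is therefore a looseness in the quantifiers of the statement rather than a defect specific to your argument. The theorem should be read for those $\epsilon$ for which the annealed bound holds.
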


\section{Proof of Theorems \ref{BE}, \ref{LLT1} and \ref{LLT2}}
\begin{proof}[Proof of Theorem \ref{BE}]
The first step in the proof is partially based on an idea in \cite{DimaJonEquivalence}.
 Take some $\alpha>1/(\eta-\zeta)$ and let take some $m,n$ such that $m^\alpha\leq n<(m+1)^\alpha$. Then  since $\|S_n^\omega A-S_k^\omega A\|_{L^1(\wh\mu_\omega)}\leq B_\omega n^{\delta_0/2}(n-k)^{1/2}$, $n>k$, almost surely we see that there is a constant $C_\omega>0$ such that 
 $$
|\varphi_{\omega,n}(t)-\varphi_{\omega,m^\alpha}(t)|\leq|t|\left\|S_n^\omega A/\sqrt n-S_{m^\alpha}^\omega/\sqrt{m^\alpha}\right\|_{L^1(\widehat{\mu}_\omega)}
$$
$$
\leq  C_\omega|t|n^{\delta_0/2}\left(n^{-1/2}n^{\frac{\alpha-1}{2\alpha}}+n^{-1/\alpha}\right)\leq 2C_\omega|t|n^{-1/(2\alpha)+\delta_0/2}
 $$
 where we used that $n-m^\alpha\leq Cm^{\alpha-1}$ and that $1/\sqrt x=1/\sqrt y-\frac{y-x}{\sqrt{xy}(\sqrt x+\sqrt y
 )}$ for all $x,y>0$. Therefore, 
  \begin{equation}\label{Main}
  \int_{|t|\leq T_n}\frac{|\varphi_{\omega,n}(t)-e^{-t^2/2}|}{|t|^a}dt\leq 
\int_{|t|\leq T_n}\frac{|\varphi_{\omega,m^\alpha}(t)-e^{-t^2/2}|}{|t|^a}dt    
  \end{equation}
 $$ 
+C_aT_n^{2-a}C_\omega n^{-1/(2\alpha)+\delta_0/2}\leq \int_{|t|\leq T_{(m+1)^\alpha}}\frac{|\varphi_{\omega,m^\alpha}(t)-e^{-t^2/2}|}{|t|^a}dt
 $$
 $$
+C_aT_n^{2-a}C_\omega n^{-1/(2\alpha)+\delta_0/2}.
 $$
 Next, take some $\epsilon_m>0$ and define
 $$
\Gamma_m(\omega)=\int_{\epsilon_m\leq |t|\leq T_{(m+1)^\alpha}}\frac{|\varphi_{\omega,m^\alpha}(t)-e^{-t^2/2}|}{|t|^a}dt.
 $$
 Then by the Cauchy-Schwartz inequality,
 $$
\Gamma_m^2(\omega)\leq 2 T_{(m+1)^\alpha}\int_{\epsilon_m\leq |t|\leq T_{(m+1)^\alpha}}\epsilon_m^{-a}\frac{|\varphi_{\omega,m^\alpha}(t)-e^{-t^2/2}|^2}{|t|^a}dt.
 $$
 Thus, 
 $$
\int \Gamma_m^2(\omega)d\mu(\omega)\leq  2 T_{(m+1)^\alpha}\epsilon_m^{-a}\int_{\epsilon_m\leq |t|\leq T_{(m+1)^\alpha}}\frac{\int|\varphi_{\omega,m^\alpha}(t)-e^{-t^2/2}|^2 d\mu(\omega)}{|t|^a}dt.
 $$
Now,
$$
\int|\varphi_{\omega,m^\alpha}(t)-e^{-t^2/2}|^2 d\mu(\omega)=\varphi_{m^\alpha}(t)+e^{-t^2}-2e^{-t^2/2}\int\mathrm{Re}(\varphi_{\omega,m^\alpha}(t))d\mu(\omega).
$$
Notice that
$$
\int_{\epsilon_m\leq |t|\leq T_{(m+1)^\alpha}}|t|^{-a}2e^{-t^2/2}\int\mathrm{Re}(\varphi_{\omega,m^\alpha}(t))dt\mu(\omega)
$$
$$
=\int_{\epsilon_m\leq |t|\leq T_{(m+1)^\alpha}}2e^{-t^2/2}|t|^{-a}\mathrm{Re}\left(\int\varphi_{\omega,m^\alpha}(t)dt\mu(\omega)\right)
$$
$$
=2\int_{\epsilon_m\leq |t|\leq T_{(m+1)^\alpha}}|t|^{-a}e^{-t^2/2}\mathrm{Re}(\phi_{m^\alpha}(t))dt
$$
$$
=2\int_{\epsilon_m\leq |t|\leq T_{(m+1)^\alpha}}|t|^{-a}e^{-t^2/2}\mathrm{Re}(\phi_{m^\alpha}(t)-e^{-t^2/2})dt
$$
$$
+2\int_{\epsilon_m\leq |t|\leq T_{(m+1)^\alpha}}|t|^{-a}e^{-t^2}dt
$$
$$
\leq2\int_{\epsilon_m\leq |t|\leq T_{(m+1)^\alpha}}|t|^{-a}|\phi_{m^\alpha}(t)-e^{-t^2/2}|dt+2\int_{\epsilon_m\leq |t|\leq T_{(m+1)^\alpha}}|t|^{-a}e^{-t^2}dt.
$$
Therefore,  by  \eqref{CF2} and the above estimates,
$$
\int(\Gamma_m(\omega))^2d\mu(\omega)
$$
$$
\leq  CT_{(m+1)^\alpha}\epsilon_m^{-a}\left(m^{-\alpha\eta}+
\int_{\epsilon_m\leq |t|\leq T_{(m+1)^\alpha}}|t|^{-a}|\varphi_{m^\alpha}(t)-e^{-t^2}|dt\right).
$$
Using \eqref{CF1} conclude  that 
$$
\int \Gamma_m^2(\omega)d\mu(\omega) \leq C_0 T_{(m+1)^\alpha}\epsilon_m^{-a}m^{-\alpha\eta}
$$
for some constant $C_0$.
Thus, using that $T_k=O(k^{\zeta})$, taking $\epsilon_m=m^{-r}$ and taking $\alpha$ such that $\alpha(\eta-\zeta)>1+ra$ we see that 
$$
\int \Gamma_m^2(\omega)d\mu(\omega)=O(m^{-1-\epsilon})
$$
where $\epsilon=\alpha(\eta-\zeta)-ra$. Thus, by the Markov inequality and the Borel Cantelli lemma we see that for every $0<\delta<\epsilon/2$,
$$
 \Gamma_m(\omega)=O(m^{-\delta})
$$
almost surely. Recalling \eqref{Main}, we thus conclude that 
$$
\int_{|t|\leq T_n}\frac{|\varphi_{\omega,n}(t)-e^{-t^2/2}|}{|t|^a}dt
$$
$$
\leq 
\int_{|t|\leq n^{-r\alpha}}\frac{|\varphi_{\omega,n}(t)-e^{-t^2/2}|}{|t|^a}dt+C_aC_\omega n^{(2-a)\zeta}n^{-1/(2\alpha)+\delta_0/2}+O(n^{-\delta/\alpha}).
$$
Now, notice that for $|t|\leq n^{-r\alpha}$ we have $|e^{-t^2/2}-1|\leq C|t|$ and 
$$
|\varphi_{\omega,n}(t)-1|\leq C_\omega n^{\delta_0/2}|t|.
$$
Therefore,
$$
\int_{|t|\leq n^{-r\alpha}}\frac{|\varphi_{\omega,n}(t)-e^{-t^2/2}|}{|t|^a}dt\leq C_\omega n^{\delta_0/2}\int_{|t|\leq n^{-r\alpha}}|t|^{1-a}dt\leq n^{-r\alpha(2-a)+\delta_0/2}. 
$$

The last part of the theorem follows by the Esseen inequality which asserts that 
$$
\sup_{t\in\mathbb R}|\widehat{\mu}_\omega\{S_n^\omega A\leq t\sqrt n\}-\Phi(t)|\leq \int_{|t|\leq T_n}\frac{|\varphi_{\omega,n}(t)-e^{-t^2/2}|}{|t|}dt+C/T_n
$$
for some absolute constant $C$.
\end{proof}

\begin{proof}[Proof of Theorem \ref{LLT1}]
Take $0<b<1$. Then, by applying \cite[Theorem 10.7]{Breiman} with the set of functions of the form $h(x)=\left(\frac{\sin(\lambda x)}{\lambda x}\right)^2, \lambda>0$
it is enough to prove the theorem for all continuous complex valued functions $G$ on $\mathbb R$ such that 
$$
\int_{-\infty}^{\infty} |x|^b|G(x)|dx<\infty
$$
which have  a Fourier transform with compact support. 

Fix such a function $G$.
Arguing like in the beginning of the proof of \cite[Theorem 2.2.3]{HK} (see \cite[Eq. (2.2.8)]{HK}) we have 
$$
\left|\sqrt{2\pi n}\int G(S_n^\omega A-u)d\widehat{\mu}_\omega-e^{-\frac{u^2}{2n}}\int G(x)dx\right|\leq \sum_{j=1}^{4}I_j(\omega,n) 
$$
where 
$$
I_1(\omega,n)=\int_{|\theta|\leq \ln n}|\varphi_{\omega,n}(\theta)\widehat G(\theta/\sqrt n)-e^{-\theta^2/2}G(0)|d\theta,
$$
$$
I_2(\omega,n)=\left|\int G(x)dx\right|\int_{|\theta|\geq \ln n}e^{-\theta^2/2}d\theta,
$$
$$
I_3(\omega,n)=\int_{\ln n\leq |\theta|\leq \delta\sqrt n}|\varphi_{\omega,n}(\theta)|d\theta,
$$
$$
I_4(\omega,n)=\int_{\delta\sqrt n\leq|\theta|\leq N\sqrt n}|\varphi_{\omega,n}(\theta)|d\theta=
\sqrt n\int_{\delta\leq |\theta|\leq N}|\widehat{\mu}_\omega(e^{i\theta S_n^\omega A})|d\theta
$$
where $N$ satisfies that $\widehat G$ is supported on $[-N,N]$. 

Next, using Theorem \ref{BE} with $a=0$ and that $|\widehat G(x)-\widehat G(y)|\leq C|x-y|^b$ (which follows since $x\to e^{ix}$ is H\"older continuous with exponent $b$) we see that 
$$
\lim_{n\to\infty}\int_{|\theta|\leq \ln n}|\varphi_{\omega,n}(\theta)\widehat G(\theta/\sqrt n)-e^{-\theta^2/2}G(0)|d\theta=0
$$
almost surely. Thus, $\lim_{n\to\infty}I_{1}(\omega,n)=0$.

Next, by the Cauchy-Schwartz inequality,
$$
(I_{3}(\omega,n))^2\leq C\sqrt n \int_{\ln n\leq |\theta|\leq \delta\sqrt n}|\varphi_{\omega,n}(\theta)|^2d\theta
$$
and so 
$$
\int (I_{3}(\omega,n))^2d \mu(\omega)\leq C\sqrt n \int_{\ln n\leq |\theta|\leq \delta\sqrt n}|\varphi_{n}(\theta)|d\theta=O(\sqrt n e^{-c\ln^2(n)})=O(n^{-2}).
$$
Therefore by the Markov inequality and the Borel Cantelli lemma we see that 
$$
I_{3}(\omega,n)\to 0, \text{a.s.}
$$
Next, by the Cauchy-Schwartz inequality, for all $N\in\mathbb N$,
$$
\left(\int_{\delta\leq |\theta|\leq N}\left|\int e^{i\theta S_n^\omega A}d\widehat{\mu}_\omega\right|\right)^2d\theta\leq C_N\int_{\delta\leq |\theta|\leq N}\left|\int e^{i\theta S_n^\omega A}d\widehat{\mu}_\omega\right|^2d\theta
$$
and so 
$$
\int \left(\int_{\delta\leq |\theta|\leq N}\left|\int e^{i\theta S_n^\omega A}d\widehat{\mu}_\omega\right|d\theta\right)^2d\mu(\omega)
$$
$$
\leq C_N\int\int_{\delta\leq |\theta|\leq N}\left|\int e^{i\theta S_n^\omega A}d\widehat{\mu}_\omega\right|^2d\theta \,d \mu(\omega)
$$
$$
=\int_{\delta\leq |\theta|\leq N}\left(\int e^{i\theta S_n\wh A/\sqrt n}d\widehat{\mu}^{(2)}\right)d\theta=O(n^{-a}).
$$
Thus, by the Markov inequality, for $0<\epsilon<a-2$,
$$
\mu\left(\int_{\delta\leq |\theta|\leq N}\left|\int e^{i\theta S_n^\omega A}d\widehat{\mu}_\omega\right|d\theta\geq n^{-1/2-\epsilon/2}\right)=O(n^{-(a-1-\epsilon)}).
$$
Hence by the Borel Cantelli lemma, 
$$
\int_{\delta\leq |\theta|\leq N}\left|\int e^{i\theta S_n^\omega A}d\widehat{\mu}_\omega\right|d\theta=O(n^{-1/2-\epsilon/2})
$$
for $\mu$-a.e. $\omega$. We conclude that 
$$
I_{4}(\omega,n)=o(n^{-\epsilon/2})
$$
almost surely. Finally, it is clear that $I_2(\omega,n)\to 0$ as $n\to\infty$, and the proof of the theorem is complete.
\end{proof}
\begin{proof}[Proof of Theorem \ref{LLT2}]
Arguing like in the  the proof of \cite[Theorem 2.2.3]{HK} in the lattice case 
the proof is similar to the proof of Theorem \ref{LLT1}. The only big difference is that in the definition of $I_4$ we need to take $N=\pi$.   
\end{proof}

\section{Proof of Proposition \ref{CondProp}}

 Denote 
$$
C_n(\omega)=\int A\cdot (A\circ f_\omega^n)d\widehat{\mu}_\omega.
$$
Take some $c>0$ and denote
$$
A_{n}=\{\omega: |C_{n}(\omega)|\geq e^{-cn}\}.
$$
Then  there is $\zeta\in(0,1)$ such that
$$
\int (C_n(\omega))^2d\mu(\omega)=\int \tilde A\cdot (\tilde A\circ F_2^n)d\widehat{\mu}^{(2)}=O(\zeta^n).
$$ 
Therefore, by the Markov inequality we can choose $c$ such that 
$$
\mu(A_{n})=O(e^{2cn}\zeta^n)=O(\gamma^n),\, \gamma=\sqrt\zeta.
$$ 
 Therefore, by applying  the Borel Cantelli lemma we see that for $\mu$-almost all $\omega$ there exists $N_2(\omega)$ such that if $n\geq N_2(\omega)$ then  
$$
|C_{n}(\omega)|\leq e^{-cn}.
$$
Let us write $\gamma=e^{-a}, a>0$. Fix some $p\geq 1$. By decreasing $c$ we can always assume that $pc<a$. 
Thus, using the trivial bound $|C_{n}(\omega)|\leq (\sup|A|)^2$,
$$
C_{n}(\omega)\leq \sup|A|e^{cN_2(\omega)}e^{-cn}:=K_2(\omega)e^{-cn}.
$$
Notice that, in fact, we can always assume that $N_2(\omega)$ is the minimal number $N$ such that 
if $n\geq N$ then  
$$
|C_{n}(\omega)|\leq e^{-cn}.
$$
Therefore, for all $k\in\mathbb N$ we have
$$
\{\omega: N_2(\omega)=k+1\}\subset \{C_k(\omega)\geq e^{-ck}\}
$$
and by the Markov inequality, so with some constant $A>0$,
$$
\mu(N_2=k+1)\leq \mu(|C_k|\geq e^{-ck})\leq A\gamma^k=Ae^{-ak}.
$$
Therefore,
$$
\int (K_2(\omega))^pd\mu(\omega)\leq 4^p(e^{cp}+\sum_{k=2}^\infty e^{cpk}e^{-a(k-1)})<\infty
$$
where the last estimate uses that  $pc<a$. We thus conclude that 
$
K_2(\cdot)\in L^p(\mu).
$
Using the mean ergodic theorem it follows that 
$$
C_n(T^k\omega)=O(k^{1/p}e^{-cn})
$$
and so $\mu$-a.s. for all $k<n$ we have
$$
\wh{\mu}_\omega((S_n^\omega A-S_k^\omega A)^2)=O(n^{1/p}(n-k)) 
$$
for all $n>k$ and $p>1$.
\section{Proof of Theorems \ref{DEC} and \ref{Exp}}
\begin{proof}[Proof of Theorem \ref{DEC}]
Let us prove the first statment of the theorem. The second part follows similarly (see also the proof of Proposition \ref{CondProp}).
Note that the underlying functions $g,h$ are bounded by $1$. Denote 
$$
C_n(\omega)=\int g\cdot (h\circ f_\omega^n)d\widehat{\mu}_\omega-\left(\int g\,d\widehat{\mu}_\omega\right)\left(\int h\,d\mu_{T^n\omega}\right). 
$$
Take some $0<c<-(\ln\zeta)/4$ and denote
$$
A_{n}=\{\omega: |C_{n}(\omega)|\geq e^{-cn}\}.
$$
Then  
$$
\int (C_n(\omega))^2d\mu(\omega)=\int G\cdot (H\circ F_2^n)d\widehat{\mu}^{(2)}=O(\zeta^n)
$$ 
Therefore, by the Markov inequality we can choose $c$ such that 
$$
\mu(A_{n})=O(e^{2cn}\zeta^n)=O(\gamma^n),\, \gamma=\sqrt\zeta.
$$ 
Therefore, by applying  the Borel Cantelli lemma we see that for $\mu$-almost all $\omega$ there exists $N_2(\omega)$ such that if $n\geq N_2(\omega)$ then  
$$
|C_{n}(\omega)|\leq e^{-cn}.
$$
Let us write $\gamma=e^{-a}, a>0$. Fix some $p\geq 1$. By decreasing $c$ we can always assume that $pc<a$. 
Thus, using the trivial bound $|C_{n}(\omega)|\leq 2$,
$$
C_{n}(\omega)\leq 2e^{cN_2(\omega)}e^{-cn}:=K_2(\omega)e^{-cn}.
$$
Notice that, in fact, we can always assume that $N_2(\omega)$ is the minimal number $N$ such that 
if $n\geq N$ then  
$$
|C_{n}(\omega)|\leq e^{-cn}.
$$
Therefore, for all $k\in\mathbb N$ we have
$$
\{\omega: N_2(\omega)=k+1\}\subset \{C_k(\omega)\geq e^{-ck}\}
$$
and so 
$$
\mu(N_2=k+1)\leq \mu(|C_k|\geq e^{-ck})\leq A\gamma^k=Ae^{-ak}.
$$
Therefore,
$$
\int (K_2(\omega))^pd\mu(\omega)\leq 4^p(e^{cp}+\sum_{k=2}^\infty e^{cpk}e^{-a(k-1)})<\infty.
$$
where the last estimate uses that  $pc<a$. We thus conclude that 
$
K_2(\cdot)\in L^p(\mu).
$
\end{proof}

\begin{proof}[Proof of Theorem \ref{Exp}]
 Fix $\epsilon>0$ and $1\leq p<\infty$. Denote 
 $$
\Gamma_{n,\epsilon}(\omega)=\widehat{\mu}_\omega(S_n^\omega A\geq \epsilon n).
 $$
 Then 
 $$
\int \Gamma_{n,\epsilon}(\omega)d\mu(\omega)=\widehat{\mu}(S_nA\geq \epsilon n)\leq Ce^{-c\epsilon^2 n}.
 $$
 Take some $0<d<c$. Then by the Markov inequality,
$$
\mu(\Gamma_{n,\epsilon}\geq e^{-d\epsilon^2n})\leq Ce^{-\epsilon^2(c-d)n}
$$
 and so by the Borel Cantelli lemma there exists a random variable $N_{\epsilon,d}(\omega)$ such that if $n\geq N_{\epsilon,p}(\omega)$ then
 $$
\Gamma_{n,\epsilon}(\omega)\leq e^{-d\epsilon^2 n}.
 $$
 Note that we can always assume that $N_{\epsilon,d}(\omega)$ is the minimal positive integer $N$ such that for all $n\geq N$ we have 
  $$
\Gamma_{n,\epsilon}(\omega)\leq e^{-d\epsilon^2 n}.
 $$
 Since $\Gamma_{n,\epsilon}\leq 1$ we conclude that for all $n\in\mathbb N$,
 $$
\Gamma_{n,\epsilon}(\omega)\leq e^{dN_{\epsilon,d}(\omega)}e^{-d\epsilon^2 n}.
 $$
 Set $K(\omega)=e^{d\epsilon^2 N_{\epsilon,d}(\omega)}$. Notice that for every $k\in\mathbb N$,
 $$
\mu(N_{\epsilon,d}=k+1)\leq \mu(\Gamma_{n,\epsilon}\geq e^{-d\epsilon^2 k})\leq Ce^{-\epsilon^2(c-d)k}.
 $$
 Therefore,
 $$
\int (K(\omega))^pd\mu(\omega)\leq e^{dp\epsilon^2}+C\sum_{k=2}^{\infty}e^{dk\epsilon^2 p}e^{-\epsilon^2(c-d)(k-1)}<\infty
 $$
Hence, $K\in L^p$ if $d(p+1)<c$ (so we can take $d=\frac{c}{p+2}$), and the proof of the theorem is complete.
\end{proof}

\section{Application to effectively expanding on average random diffeomorphisms}\label{App1}
Suppose that $(\Sigma,\mathcal F,\mu,T)$ is a Bernoulli shift with a finite number of symbols $\Sigma_0$ and write $\Sigma=\Sigma_0^\mathbb Z$ and $\mu=\mu_0^\mathbb Z$ for some measure $\mu_0$ on $\Sigma_0$.
Let $M$ be a closed Riemannian manifold of dimension $d\geq 2$. Let $\{f_a\}_{a\in \Sigma_0}\subseteq \mathrm{Diff}^{1+\gamma}(M)$, $\gamma>0$.

\begin{definition}[Pressure out of equilibrium {\cite[Definition~2.1]{TDFOE-I}}]\label{DefOfPOE}
	Given a compact metric space $X$ and $\wh\phi\in\mathrm{H\ddot{o}l}(\Sigma\times X)$, we define the {\em pressure out of equilibrium}, or {\em POE} for short, as $$\widehat{P}(\widehat{\phi}):=\limsup_{n\to\infty}\frac{1}{n}\log\sup_{t\in X}\widehat{Z}_n(\widehat{\phi},t,a),$$ 
	where
	$$\widehat{Z}_n(\widehat{\phi},t,a):=\sum_{|\underline{w}|=n,w_{n-1}=a}e^{\sum_{k=0}^{n-1}\phi_{F^k_{\theta_{\underline{w}}}(t)}(T^k(\theta_{\underline{w}}))},$$
	$a\in \Sigma_0$, and $\theta_{\underline{w}}\in[\underline{w}]$ maximize $\omega\mapsto \sum_{k=0}^{n-1}\phi_{F^k_{\omega}(t)}(T^k(\omega))$.
\end{definition}

\begin{definition}[{Derivative cocycle extension \cite[Definition~9.2]{TDFOE-II}}]
We denote the {\em derivative cocycle extension} by $\wh F_L:\Sigma^+\times T^1M\to \Sigma^+ \times T^1M$, where
$$\wh F(\omega, x,\xi):=\Big(T\omega, f_{\omega},\frac{d_xf_{\omega}\xi}{|d_xf_{\omega}\xi|}\Big).$$
Define the derivative potential
$$\wh D(\omega,x,\xi):=-\log |d_{x} f_{\omega}^{-1}\xi|,$$
as a H\"older potential on the skew-product system.
\end{definition}

Let $\wh f:\Sigma\times M\to \Sigma\times M$ be the skew-product map $\wh f(\omega,x)=(T\omega,f_\omega(x))$.

\begin{definition}[{Similarity dimension \cite[Definition~9.4]{TDFOE-II}}]
	The {\em similarity dimension} associated with $(\Sigma\times M,\wh f)$ and $\mu$, where $\mu$ is the Gibbs state of $\psi$, is
	$$d^*:=\sup\Big\{\beta>0: \wh P(\psi- \beta\cdot \wh D)<0\Big\},$$
	and is defined as $0$ if the supremum is over an empty set.
\end{definition}

\begin{lemma}[{\cite[Lemma~9.6]{TDFOE-II}}]
	$$d^* \leq d.$$
\end{lemma}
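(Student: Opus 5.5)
We need to show that $d^*\le d$. The plan is to take any $\beta>d$ and show $\wh P(\psi-\beta\cdot\wh D)\ge 0$. Then every admissible $\beta$ in the supremum defining $d^*$ satisfies $\beta\le d$, and the bound follows. If the set is empty, $d^*=0\le d$ trivially.

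The key geometric input is a volume argument on the projective/unit tangent bundle. Fix $n$, a word $\underline w$ of length $n$ and a point $t=(x,\xi)$. Write $f^n_{\underline w}$ for the composition along the word. The product $\prod_k |d f_{T^k\omega}^{-1}\xi_k|^{-1}$ along the orbit is the expansion of $df^n_{\underline w}$ in the tracked direction $\xi$. Hence $e^{-\beta S_n\wh D}=|d_xf^n_{\underline w}\xi|^{\beta}$, up to the harmless index convention of whether $\wh D$ is evaluated before or after applying the map. I will choose $t$ to maximize this over $T^1M$. In particular, $\xi$ can be taken in the most expanded direction, giving $\sup_\xi |d_xf^n\xi|=\|d_xf^n_{\underline w}\|$.

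Since $f^n_{\underline w}$ is a diffeomorphism of the compact manifold $M$, we have $\int_M|\det d_xf^n_{\underline w}|\,d\mathrm{Vol}=\mathrm{Vol}(M)$. So there is a point $x$ with $|\det d_xf^n_{\underline w}|\ge 1$, and therefore $\|d_xf^n_{\underline w}\|\ge 1$ there. The subtlety is that the supremum in Definition~\ref{DefOfPOE} is over a single $t$ for all words simultaneously.

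I would handle this by averaging instead: $\sup_t \wh Z_n(t)\ge \frac{1}{\mathrm{Vol}(T^1M)}\int_{T^1M}\wh Z_n(t)\,dt$. For each word, integrating over $\xi$ on the sphere bounds $\int|d_xf^n\xi|^{\beta}d\xi$ from below. For $\beta\ge d$, Hölder and Jensen give $\int_{S^{d-1}}|L\xi|^{\beta}\,d\xi\ge c_d|\det L|$. Indeed, $\int|L\xi|^{-d}d\xi=|\det L|^{-1}\mathrm{vol}(S^{d-1})$, which is the standard change-of-variables identity for the projective action. Jensen applied to $s\mapsto s^{-\beta/d}$ then yields $\int|L\xi|^{\beta}\ge c|\det L|^{\beta/d}$, and integrating in $x$ gives a quantity that is at least a constant.

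Summing over words then gives $\int \wh Z_n\ge c\sum_{|\underline w|=n}e^{S_n\psi(\theta_{\underline w})}$. This is $\gtrsim 1$ because $\mu$ is the Gibbs state of $\psi$ with zero pressure (normalize $P(\psi)=0$, or subtract it in the definition). The constraint $w_{n-1}=a$ only costs a constant factor, by mixing of the subshift of finite type and the Gibbs property. Therefore $\frac1n\log\sup_t\wh Z_n\ge -o(1)$, so $\wh P(\psi-\beta\wh D)\ge 0$ for all $\beta\ge d$, and $d^*\le d$.

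The main obstacle is the Jensen/determinant step. It must be checked carefully with the exact normalization of $\wh D$ (inverse derivative evaluated at the image point). It also requires the Hölder/Bowen-type distortion needed to replace the maximizer $\theta_{\underline w}$ by arbitrary $\omega$ in the cylinder, which is where the $C^{1+\gamma}$ regularity enters.
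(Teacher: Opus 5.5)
The paper does not prove this lemma; it quotes it from \cite{TDFOE-II}, so your argument has to stand on its own. Its architecture is sound. Replacing $\sup_t$ by the Liouville average over $T^1M$ is the right way around the fact that a single $t$ must serve all words. The ingredients are also the right ones: the projective identity $\int_{S^{d-1}}|L\xi|^{-d}d\sigma=|\det L|^{-1}$ for normalized $\sigma$, $\int_M|\det d_xf^n_{\underline w}|\,dm=1$, and the Gibbs property with $P(\psi)=0$.

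However, the central identity has the wrong sign, so the estimate you actually prove is not the one needed. By your own observation, $S_n\wh D=\log|d_xf^n_{\underline w}\xi|$, hence $e^{-\beta S_n\wh D}=|d_xf^n_{\underline w}\xi|^{-\beta}$, not $|d_xf^n_{\underline w}\xi|^{\beta}$. This matches the paper's remark that effective expansion on average means exponential decay of $\max_{(x,\xi)}\int|d_xf^n_\omega\xi|^{-\kappa}d\mu$. Your lower bound on $\sup_t\int|d_xf^n_\omega\xi|^{\beta}d\mu$ is therefore a statement about $\wh P(\psi+\beta\wh D)$. That quantity has no bearing on $d^*$; under expansion on average it is even strictly positive for every $\beta>0$. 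Your starting heuristic also points the wrong way: you need the \emph{least} expanded direction at a point where $|\det d_xf^n_{\underline w}|\le 1$, not the most expanded one where $|\det|\ge 1$.

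With the sign corrected, the hypothesis $\beta\ge d$ must be used on the sphere rather than in the $x$-integral. Let $\sigma_x$ be normalized measure on the unit sphere of $T_xM$ and $L=d_xf^n_{\underline w}$. First, Jensen for the convex map $s\mapsto s^{\beta/d}$ (here $\beta\ge d$ is used) applied to $u=|L\xi|^{-d}$ gives $\int|L\xi|^{-\beta}d\sigma_x\ge|\det L|^{-\beta/d}$. Second, Jensen for $s\mapsto s^{-\beta/d}$, which is convex for every $\beta>0$, gives $\int_M|\det d_xf^n_{\underline w}|^{-\beta/d}dm(x)\ge 1$. So the normalized Liouville average of $|d_xf^n_{\underline w}\xi|^{-\beta}$ is at least $1$ for every word. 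Consequently $\sup_t\wh Z_n(t)\ge C^{-1}\sum_{w_{n-1}=a}\mu[\underline w]=C^{-1}\mu_0(a)$, and $\wh P(\psi-\beta\wh D)\ge 0$ for all $\beta\ge d$.

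In your version the two Jensen steps are interchanged: the sphere step holds for all $\beta$ and the $x$-step needs $\beta\ge d$. This is why the sign cannot be treated as a harmless convention.

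Two smaller points. Your intermediate bound $\int|L\xi|^\beta\ge c_d|\det L|$ should have exponent $\beta/d$. And the distortion issue you flag as the main obstacle does not arise. Since $\wh Z_n$ uses the maximizer $\theta_{\underline w}$, for a lower bound you may evaluate at any point of each cylinder, chosen independently of $t$. So no $C^{1+\gamma}$ regularity is needed for this lemma.
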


\begin{definition}[{Effective expansion on average \cite[Definition~9.9]{TDFOE-II}}]
     We say that $(\Sigma\times M,\wh f)$ admits {\em effective expansion on average w.r.t. $\mu$} if
    \begin{equation*}\label{UEAEq}
%      \wh P(\psi- d\cdot \wh D)<0.
d^*>d-2.
    \end{equation*}
We may also specify and say that  $(\Sigma\times M,\wh f)$ admits {\em effective expansion on average with a parameter $\chi>0$} if $\inf_{\kappa\in (d-2,d^*)}\wh P(\psi- \kappa\cdot \wh D)<-\chi$.
\end{definition}

\begin{remark}
    Equivalently to effective expansion on average is, $\exists \kappa>d-2$ s.t. 
    $$\limsup\frac{1}{n}\log \max_{(x,\xi)\in T^1M}\int |d_xf_\omega^n\xi|^{-\kappa}d\mu<0.$$
\end{remark}

\begin{remark}
	 When $f_\omega=f_{(\omega_i)_{i\leq0}}$, we are in the setting of \cite[Theorem~6.7]{TDFOE-I} (the variational principle), and so it is easy to observe that the effective expansion on average is a $C^1$-open condition in $\{f_\omega\}_{\omega\in \Sigma}$ and $C^0$-open in $\psi$ (see \cite[Lemma~9.8]{TDFOE-II}).
\end{remark}

\begin{remark}
    The condition of {\em co-expansion on average} (see \cite{JonDima}) for conservative systems (i.e. $f_\omega$'s are volume preserving) implies effective expansion on average; Moreover with $d^*=d$. See \cite[\textsection~B]{TDFOE-II}.
\end{remark}

Denote the {\em two points dynamics} by
$$\wh f^{(2)}(\omega,x,y)=(T\omega, f_\omega x, f_\omega y).$$ 

\begin{prop}\label{onePointTwoPoints}
    Effective expansion on average implies effective expansion on average for the two points dynamics: 
     $$\limsup\frac{1}{n}\log \max_{(x,\xi),(y,\eta)\in TM,|\xi|^2+|\eta|^2=1}\int \Big(|d_xf_\omega^n\xi|^2+|d_yf_\omega^n\eta|^2\Big)^{-\frac{\kappa}{2}}d\mu<0.$$
\end{prop}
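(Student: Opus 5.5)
We reduce the two-point quantity to the one-point quantity from the remark following the definition of effective expansion on average. By that remark, fix $\kappa>d-2$ and $\lambda>0$, $C>0$ with
$$
\max_{(x,\xi)\in T^1M}\int |d_xf^n_\omega\xi|^{-\kappa}d\mu(\omega)\leq Ce^{-\lambda n}\quad\text{for all } n.
$$
By homogeneity, for any $(x,\xi)\in TM$ with $\xi\neq0$ we have $\int|d_xf^n_\omega\xi|^{-\kappa}d\mu\leq C|\xi|^{-\kappa}e^{-\lambda n}$.

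Now fix $(x,\xi),(y,\eta)$ with $|\xi|^2+|\eta|^2=1$. Then at least one of $|\xi|^2,|\eta|^2$ is $\geq\frac12$; say $|\xi|^2\geq \frac12$. Since $\kappa>0$, the map $s\mapsto s^{-\kappa/2}$ is decreasing. Hence pointwise in $\omega$
$$
\Big(|d_xf^n_\omega\xi|^2+|d_yf^n_\omega\eta|^2\Big)^{-\kappa/2}\leq |d_xf^n_\omega\xi|^{-\kappa}.
$$
Here $d_xf^n_\omega\xi\neq0$ because $f_\omega^n$ is a diffeomorphism. Integrating over $\mu$ gives the bound $C|\xi|^{-\kappa}e^{-\lambda n}\leq C2^{\kappa/2}e^{-\lambda n}$. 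The symmetric case $|\eta|^2\geq\frac12$ is identical with $(y,\eta)$.

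The bound is uniform in the pair $(x,\xi),(y,\eta)$, so the maximum is at most $C2^{\kappa/2}e^{-\lambda n}$. Taking $\frac1n\log$ and $\limsup$ yields $\leq-\lambda<0$, which proves the proposition, even with the same $\kappa$.

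The only delicate point is that we need $\kappa>0$. This holds since $\kappa>d-2\geq0$ for $d\geq2$. The max over the noncompact set $\{|\xi|^2+|\eta|^2=1\}$ causes no trouble, because the set is in fact compact and the bound is uniform anyway.
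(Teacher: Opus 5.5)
Your proof is correct, but it rests on a different elementary inequality than the paper's.

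\textbf{The paper's route.} It treats $\xi=0$ or $\eta=0$ as trivial and otherwise applies Jensen's inequality. It uses convexity of $s\mapsto s^{-\kappa/2}$ on $(0,\infty)$ with the weights $|\xi|^2,|\eta|^2$, which sum to $1$, to get the pointwise bound
$$\Big(|d_xf_\omega^n\xi|^2+|d_yf_\omega^n\eta|^2\Big)^{-\kappa/2}\leq |\xi|^2\,\Big|d_xf_\omega^n\tfrac{\xi}{|\xi|}\Big|^{-\kappa}+|\eta|^2\,\Big|d_yf_\omega^n\tfrac{\eta}{|\eta|}\Big|^{-\kappa}.$$
After integrating, the two-point quantity at each fixed $n$ is bounded by exactly the one-point maximum $\max_{T^1M}\int|d_zf^n_\omega\zeta|^{-\kappa}d\mu$, with constant $1$.

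\textbf{Your route.} You use only monotonicity of $s\mapsto s^{-\kappa/2}$ and discard the shorter of the two vectors. You then pay a factor $|\xi|^{-\kappa}\leq 2^{\kappa/2}$ through the choice $\max(|\xi|^2,|\eta|^2)\geq\frac12$, and this factor disappears after taking $\frac1n\log$ and $\limsup$.

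\textbf{Comparison.} The paper's argument gives the cleaner, constant-free comparison at every time $n$. Yours loses a harmless multiplicative constant but needs only monotonicity rather than convexity. It also covers the degenerate cases $\xi=0$ or $\eta=0$ automatically, since the vector you keep always has norm at least $2^{-1/2}$. Both arguments need $\kappa>0$, and you correctly derive this from $\kappa>d-2\geq 0$.
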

\begin{proof}
The statement is trivial if $|\xi|=0$ or $|\eta|=0$. Assume otherwise, then by Jensen's inequality
  \begin{align*}
      &\int \Big(|d_xf_\omega^n\xi|^2+|d_yf_\omega^n\eta|^2\Big)^{-\frac{\kappa}{2}}d\mu\\
      =&\int \Big(|d_xf_\omega^n\frac{\xi}{|\xi|}|^2\cdot |\xi|^2+|d_yf_\omega^n\frac{\eta}{|\eta|}|^2\cdot |\eta|^2\Big)^{-\frac{\kappa}{2}}d\mu\\
      \leq& \int |d_xf_\omega^n\frac{\xi}{|\xi|}|^{-\kappa}\cdot |\xi|^2+|d_yf_\omega^n\frac{\eta}{|\eta|}|^{-\kappa}\cdot |\eta|^2d\mu\\
      \leq & (|\xi|^2+|\eta|^2)\max_{(z,\zeta)\in T^1M}\int |d_zf_\omega^n\zeta|^{-\kappa}d\mu.
  \end{align*}  
\end{proof}

Let $\Sigma^+:=\{(\omega_i)_{g\geq0}: \omega\in \Sigma\}$ be the one-sided shift, assume that $F_\omega=F_{(\omega_i)_{i\geq0}}$, and let $\wh F_L:\Sigma^+\times X\to \Sigma^+\times X$ be defined by $\wh F_L(\omega^+, t)=(T_L\omega^+,F_{\omega^+}(t))$.

\begin{definition}[{Semi-Ruelle Operator \cite[Definition~3.7]{TDFOE-II}}]
Given $\wh\phi\in C(\Sigma^+)$, and $\wh F_L:\Sigma^+\times X \to \Sigma^+ \times X$ s.t. $F_\omega=F_{(\omega_i)_{i\geq0}}$,  we define the associated {\em semi-Ruelle operator}, $\L_{\wh\phi}:C(\Sigma^+\times X)\to C(\Sigma^+\times X) $ by
\begin{equation}\label{defOfRuelleEq}
	(\L_{\wh\phi}g)(\omega^+,t):=\sum_{\wh{F}_L(\wt\omega^+,\wt t)=(\omega^+,t)}e^{\wh\phi(\wt\omega^+,\wt t)}g(\wt\omega^+,\wt t).
\end{equation}
\end{definition}

Given $s\in (0,1)$ we denote by $\Hh_{-s}(m)$ the {\em negative index Sobolev space}, where $m$ denotes the normalized Riemannian volume measure of $M$. See \cite[\textsection~9.3]{TDFOE-II} for details. 

\begin{definition}[{Averaged semi-Ruelle operator \cite[Definition~9.20]{TDFOE-II}}]
Given $n\in\mathbb{N}$	The {\em averaged ($n$-th power of the) semi-Ruelle operator} acting on $g\in L^2(m)$, is defined by 
	$$(\mathcal L_{n}g)(x):=\Big(\int \L_{\wh\phi}^nd\mu\Big)(g)(x)=\int (\mathcal{L}_{\omega^+,n}g)(x)d\mu(\omega^+).$$
	We denote $\mathcal L_{1}$ by simply $\mathcal{L}$ for short.
\end{definition}

\begin{remark}
    Note, $\wh P(\wh\phi)$ is the log of the spectral radius of $\wh{\mathcal{L}}_{\wh\phi}:C(\Sigma^-\times X)\to C(\Sigma^-\times X)$, when $\Sigma^-:=\{(\omega_i)_{i\leq-1}:\omega\in \Sigma\}$. See \cite[Lemma~3.10]{TDFOE-II}.
\end{remark}

\begin{lemma}[{i.i.d. averaged powers \cite[Lemma~9.21]{TDFOE-II}}]\label{lemlem}
	In our setting $\mu$ is a Bernoulli measure and $f_\omega=f_{\omega_0}$. Then for all $g\in L^2(m)$, for all $n\geq1$, for all $\omega\in \Sigma$, for $m$-a.e. $x\in M$,
	$$(\mathcal L_1^ng)(x)=(\mathcal L_ng)(x)=(\L_{\wh\phi}^ng)(\omega^+,x).$$
\end{lemma}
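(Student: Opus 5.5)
The identity should follow from two facts. First, the semi-Ruelle operator acts fiberwise: its fiber action depends only on finitely many coordinates of $\omega^+$. Second, $\mu$ is a product measure. I will argue by induction on $n$, after first unwinding the definitions.

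\textbf{Step 1: the fiber form of $\L^n_{\wh\phi}$.} For $g\in L^2(m)$, viewed as a function on $\Sigma^+\times M$ independent of $\omega^+$, I will write out the preimages of $(\omega^+,x)$ under $\wh F_L^n$. These are the pairs $(\underline a\,\omega^+, f_{\underline a}^{-n}x)$ with $\underline a=(a_0,\dots,a_{n-1})\in\Sigma_0^n$. Here $f_\omega=f_{\omega_0}$, and the $f_a$ are diffeomorphisms, so each fiber map has exactly one preimage. This gives
$$(\L^n_{\wh\phi}g)(\omega^+,x)=\sum_{\underline a\in\Sigma_0^n}e^{S_n\wh\phi(\underline a\omega^+,f^{-n}_{\underline a}x)}\,g(f_{\underline a}^{-n}x).$$
The potential is $\wh\phi=\psi-\kappa\wh D$, or more generally a potential built from the Bernoulli potential and the derivative. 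Because $\mu$ is Bernoulli, $\psi$ can be taken to depend only on $\omega_0$, with $e^{\psi(a)}=\mu_0(a)$ after normalizing. The derivative term depends only on $(a_0,x)$. So the weight depends only on $\underline a$ and $x$, and in particular it does not depend on $\omega^+$. Hence $(\L^n_{\wh\phi}g)(\omega^+,x)$ is independent of $\omega^+$, and the $n$-step weight factors as the product of the one-step weights along the word $\underline a$.

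\textbf{Step 2: averaging.} Since the integrand is constant in $\omega^+$, integrating over $\mu$ gives $(\mathcal L_n g)(x)=\int(\L^n_{\wh\phi}g)(\omega^+,x)\,d\mu=(\L^n_{\wh\phi}g)(\omega^+,x)$ for every $\omega^+$. This is the second equality.

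\textbf{Step 3: $\mathcal L_1^n=\mathcal L_n$.} I will induct on $n$ using $\L^{n+1}_{\wh\phi}=\L_{\wh\phi}\L^n_{\wh\phi}$. By Step 1, $\L^n_{\wh\phi}g=\mathcal L_n g$ is a function of $x$ alone. Applying $\L_{\wh\phi}$ to it is therefore the same as applying $\mathcal L_1$ to it, which gives $\mathcal L_{n+1}g=\mathcal L_1\mathcal L_ng=\mathcal L_1^{n+1}g$.

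\textbf{Where care is needed.} The main obstacle is the claim in Step 1 that the weight is independent of $\omega^+$. This needs $\psi$ to be cohomologous to a function of $\omega_0$ alone; if $\psi$ is only cohomologous to such a function, I would replace it by that representative, using the Bernoulli property. It also needs the maps $f_a$ to act only through $\omega_0$. A second point is that the fiber formula holds for continuous $g$, whereas the lemma is stated for $g\in L^2(m)$. I will extend it by density, using that each $f_a$ is a diffeomorphism, so composing with $f_{\underline a}^{-n}$ preserves $m$-null sets. This is why the identity is only claimed for $m$-a.e.\ $x$.
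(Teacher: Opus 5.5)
The paper gives no proof of this lemma; it is quoted from \cite[Lemma~9.21]{TDFOE-II}. Your argument is the natural direct verification, and in the intended setting it is correct. Once the weight $e^{\wh\phi}$ at a preimage $(a\omega^+,f_a^{-1}x)$ depends only on $(a,x)$, the fiber action of $\L^n_{\wh\phi}$ on functions of $x$ alone does not depend on $\omega^+$. That gives the second equality, and your induction gives $\mathcal L_{n+1}=\mathcal L_1\mathcal L_n$.

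Two small remarks on Step 1:
\begin{itemize}
\item The potential relevant for operators on $L^2(m)$ has the form $\psi(\omega)+\log\Jac_x(f_\omega^{-1})$, as in the paper's remark identifying $\L_{\wh\phi}$ with the dual of the Koopman operator. It is not $\psi-\kappa\wh D$, which lives on $T^1M$. Your argument still covers it, since you only use that the potential depends on $(\omega_0,x)$.
\item No density argument is needed for $g\in L^2(m)$. The fiber formula is well defined on $m$-a.e.\ classes directly, because each $f_a$ is a diffeomorphism and hence nonsingular for $m$.
\end{itemize}

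The one thing to correct is the fallback in your last paragraph. You cannot replace $\psi$ by a cohomologous function of $\omega_0$, because the statement is not invariant under coboundaries. Suppose $\psi=\psi_0+u-u\circ T_L$ with $u$ a function on $\Sigma^+$, and let $\wh\phi_0$ be the potential built from $\psi_0$. Then $\L^n_{\wh\phi}g=e^{-u}\,\L^n_{\wh\phi_0}(e^{u}g)$, which in general depends on $\omega^+$ even when $g$ does not.

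For a concrete example, take all $f_a=\mathrm{id}$, $g\equiv 1$, $n=1$, $\psi_0(\omega)=\log\mu_0(\omega_0)$, and $u$ a non-constant function of $\omega_0$. Then $(\L_{\wh\phi}1)(\omega^+,x)=e^{-u(\omega^+)}\sum_a\mu_0(a)e^{u(a)}$. This is not independent of $\omega^+$, even though $\mu$ is still the Gibbs state of $\psi$. So the lemma is false for such $\psi$, and changing the representative changes the very operator the lemma is about.

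``In our setting'' must therefore be read as $\psi(\omega)=\log\mu_0(\omega_0)$, up to an additive constant. With that reading your proof is complete.
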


\begin{definition}[{Averaged Koopman operator \cite[Definition~9.24]{TDFOE-II}}]
Given $n\in\mathbb{N}$	The {\em averaged ($n$-th power of the) Koopman operator} acting on $g\in L^2(m)$, is defined by 
	$$(\Uu_{n}g)(x):= \int 
	g\circ f_{\omega^+}^n(x)d\mu(\omega^+).$$
	We denote $\Uu_{1}$ by simply $\Uu$ for short.
\end{definition}

\begin{remark}
As in Lemma \ref{lemlem}, if $\Sigma$ is a full-shift, $\mu$ is a Bernoulli measure, and $f_\omega=f_{\omega_0}$, then 
	$$\Uu_n=\Uu^n.$$
\end{remark}

\begin{definition}[Volume Decay of Correlations for H\"older functions]
		We say that $\wh m$ admits {\em volume decay of correlation  on H\"older functions} with $ \nu\in \mathbb{P}(\Sigma^+\times M)$ if for all $\theta>0$ small enough there exist $C,\tau>0$ s.t. 
 for all $g,h\in \mathrm{H\ddot{o}l}_\theta(M)$, for all $n\geq0$,
\begin{equation}\label{forSobLater}
    \Big|\int g\circ \wh F^n hd\wh \mu-\int g d \nu \cdot\int h dm\Big|\leq 
    a_n\|g\|_{\mathrm{H\ddot{o}l}_\theta} \|h\|_{\mathrm{H\ddot{o}l}_\theta},
\end{equation}
	where we view $g$ and $h$ as functions on $\Sigma^+\times M$ by the natural extension, $\wh\mu:=\mu\times m$, and $a_n\to 0$.
\end{definition}

\begin{remark}
    Note, assuming that there exist $C,\tau>0$ s.t. for all $g,h\in \mathrm{H\ddot{o}l}_\theta(M)$ with $\int h dm=0$, for all $n\geq0$,
\begin{equation*}
    \Big|\int g\circ \wh F^n hd\wh \mu\Big|\leq Ce^{-\tau n}
    \|g\|_{\mathrm{H\ddot{o}l}_\theta} \|h\|_{\mathrm{H\ddot{o}l}_\theta},
\end{equation*}
Implies volume decay of correlations with some $ \nu\in \mathbb{P}(\Sigma^+\times M)$. See \cite{ExpFastVolLimits}.
\end{remark}

\begin{remark}
    Note, when $\wh\phi(\omega,x)=\psi(\omega)+\log \Jac_x(f_\omega^{-1})$ (where $\mu$ is $\psi$-Gibbs), we get that $\wh{\mathcal{L}}_{\wh\phi}$ is the $L^2(\wh\mu)$-dual of $\wh{\mathcal{U}}$, the Koopman operator of $\wh F$.
\end{remark}

Recall that $m$ denotes the normalized Riemannian volume measure of $M$.
\begin{theorem}[Complex Lasota-Yorke]\label{CLY}
Assume that $(\Sigma\times M,\wh f)$ effectively expands on average with a parameter $\chi>0$, admits volume decay of correlations for H\"older functions with a measure $ \nu$, and that $\mu$ is a Bernoulli measure with $f_\omega=f_{\omega_0}$. Then %$\Ll_t:\Hh_{-s}(m)\to \Hh_{-s}(m)$
$\Uu_t:\Hh_{s}(m)\to \Hh_{s}(m)$, for some $s\in(0,1)$, satisfies for all $n$ large enough
   $$\|\Uu_t^n g\|_{s}\leq B^n_t\|g\|_{%s-\epsilon
   L^2(m)}+ B_te^{-\chi n}\|g\|_{s},$$
   for a constant $B_t>0$ %for all $\epsilon>0$ small enough 
   where% $\Ll_t$ is the dual of 
   $$\Uu_t g=%\int e^{it A\circ f_\omega(x)}g\circ f_\omega d\mu
   e^{itA}\Uu(g),$$
   where $A\in \mathrm{H\ddot{o}l}_\theta(M)$ with $\theta>s$, and $t\in \mathbb{R}$.
\end{theorem}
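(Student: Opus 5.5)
We aim for a Lasota–Yorke inequality for $\Uu_t$ on $\Hh_s(m)$, where $\|\cdot\|_s$ is the fractional Sobolev (Gagliardo) seminorm plus the $L^2(m)$ norm:
$$\|g\|_s^2=\|g\|_{L^2(m)}^2+\iint\frac{|g(x)-g(y)|^2}{d(x,y)^{d+2s}}\,dm(x)\,dm(y).$$
Since $\mu$ is Bernoulli and $f_\omega=f_{\omega_0}$, the remark after the definition of $\Uu_n$ gives $\Uu_t^n g(x)=\int e^{itS_n^\omega A(x)}\,g(f^n_\omega x)\,d\mu(\omega)$. The $L^2$ part is handled first. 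The maps $f_a$ are $C^{1+\gamma}$ diffeomorphisms of a compact manifold, so their Jacobians are bounded, and therefore $\|g\circ f^n_\omega\|_{L^2(m)}\le B^n\|g\|_{L^2(m)}$. This bound is the source of the $B_t^n\|g\|_{L^2}$ term.

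For the seminorm, split the difference
$$\Uu^n_t g(x)-\Uu_t^ng(y)=\int e^{itS_n^\omega A(x)}\bigl(g(f^n_\omega x)-g(f^n_\omega y)\bigr)\,d\mu+\int \bigl(e^{itS_n^\omega A(x)}-e^{itS_n^\omega A(y)}\bigr)g(f^n_\omega y)\,d\mu.$$
For the second term, $|e^{itS_n A(x)}-e^{itS_nA(y)}|\le \min\bigl(2,|t|\,n\|A\|_\theta\,\mathrm{Lip}(f^n)^\theta d(x,y)^\theta\bigr)$. Because $\theta>s$, the kernel $d(x,y)^{2\theta-d-2s}$ is integrable, so this term contributes at most $C_t^n\|g\|_{L^2}$, up to a change of variables with bounded Jacobians. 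For the first term, apply Cauchy–Schwarz in $\omega$ and then change variables $(x,y)\mapsto(f^n_\omega x,f^n_\omega y)$. This turns its contribution into
$$\int\!\!\iint \frac{|g(u)-g(v)|^2}{d(u,v)^{d+2s}}\;\frac{d\bigl(u,v\bigr)^{d+2s}}{d\bigl(f^{-n}_\omega u,f^{-n}_\omega v\bigr)^{d+2s}}\,\Jac\,dm\,dm\,d\mu,$$
where the ratio is integrated over $\omega$ with $(u,v)$ fixed.

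The key step is therefore to show that, for $(u,v)$ close,
$$\int \frac{d(u,v)^{d+2s}}{d(f^{-n}_\omega u,f^{-n}_\omega v)^{d+2s}}\,\Jac(f_\omega^{-n})(u)\,\Jac(f_\omega^{-n})(v)\,d\mu\lesssim e^{-\chi n}.$$
On the near-diagonal, linearize: $d(f^{-n}u,f^{-n}v)\approx |d f^{-n}\xi|\,d(u,v)$ with $\xi$ the unit direction. After reorganizing in the original coordinates $x,y$, the weight becomes $|d_xf^n_\omega\xi|^{-(d+2s)}$ times Jacobian factors. The Jacobian of the pair map $(x,y)\mapsto(f x,f y)$ near the diagonal roughly contributes a factor $|df\xi|^{d}$ on the normal direction, leaving a net exponent of $\kappa=2s+d-d\ldots$. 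The bookkeeping has to be arranged so that the effective exponent is some $\kappa\in(d-2,d^*)$. This is exactly where the condition $d^*>d-2$ enters, with $s$ chosen near $1$ so that $\kappa:=d-2+2s$ (or a similar combination) lies in $(d-2,d^*)$. Then the definition of effective expansion with parameter $\chi$, through the remark giving $\limsup\frac1n\log\max\int|d_xf^n_\omega\xi|^{-\kappa}d\mu<-\chi$ (or Proposition \ref{onePointTwoPoints} for the pair), supplies the factor $e^{-\chi n}$. Far from the diagonal, $d(x,y)\ge r_n$, the kernel is bounded and gives another $C^n\|g\|_{L^2}^2$ term. Choosing the scale $r_n$ exponentially small so that the linearization is valid, using $C^{1+\gamma}$ distortion control, is what forces the constant $B_t^n$ in front of the $L^2$ term.

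I expect this near-diagonal estimate to be the main obstacle: the non-conformal distortion, the Jacobian contributions, and the error of linearization must be matched precisely against the exponent $\kappa$. My first attempt would follow the argument in \cite[\textsection~9.3]{TDFOE-II} for the untwisted $\Uu$, noting that the twist $e^{itA}$ only adds the harmless second term above. The volume decay of correlations assumption is not needed for this inequality itself. It would be used afterwards, together with compactness of $\Hh_s\hookrightarrow L^2$ and Hennion's theorem, to get quasi-compactness and the spectral gap.
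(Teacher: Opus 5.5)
Your $L^2$ bound and your treatment of the twist term are fine. The twist term uses H\"older continuity of $A$ with $\theta>s$ and a split near and far from the diagonal, and both pieces are essentially what the paper does: its bound by $\|\Uu\|_{L^2(m)\to L^2(m)}^n$ and its term $\mathrm{I}$. The gap is your ``key step'' for the untwisted term. It is not a bookkeeping problem; the inequality is false. After Cauchy--Schwarz in $\omega$ you are estimating $\int\mathrm{Var}_s^2(g\circ f^n_\omega)\,d\mu(\omega)$. As $v\to u$ along a unit vector $\eta$, your kernel tends to $\int|d_uf^{-n}_\omega\eta|^{-(d+2s)}\Jac(f^{-n}_\omega)(u)^2\,d\mu(\omega)$. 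Here $|d_uf^{-n}_\omega\eta|^{-(d+2s)}=|d_xf^n_\omega\xi|^{+(d+2s)}$ with $x=f^{-n}_\omega u$, so this is a \emph{positive} power of the forward derivative. Effective expansion controls the opposite object, $\int|d_xf^n_\omega\xi|^{-\kappa}\,d\mu$ with $\kappa<d^*\leq d$.

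Concretely, take the conservative case, which the paper explicitly covers (co-expansion on average gives $d^*=d$). There the Jacobians equal $1$. For any linear $L$ with $|\det L|=1$ and normalized $\sigma$ one has $\int_{S^{d-1}}|L\eta|^{-d}\,d\sigma(\eta)=1$, since this is the Jacobian of $\eta\mapsto L\eta/|L\eta|$. By Jensen, $\int_{S^{d-1}}|L\eta|^{-(d+2s)}\,d\sigma\geq 1$, so $\sup_\eta\int|d_uf^{-n}_\omega\eta|^{-(d+2s)}\,d\mu\geq 1$ for every $n$. Testing on isotropic high-frequency $g$ then shows that no bound of the form $\int\mathrm{Var}_s^2(g\circ f^n_\omega)\,d\mu\leq Ce^{-2\chi n}\mathrm{Var}_s^2(g)+B^n\|g\|_{L^2}^2$ can hold. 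The underlying reason is that composing with maps that expand makes $g$ rougher realization by realization. The smoothing of $\Uu^n$ comes from cancellation between different $\omega$'s, and Cauchy--Schwarz in $\omega$ throws exactly that away.

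The paper does not attempt this estimate by hand. It imports from \cite[Theorem~10.7]{TDFOE-II} the decomposition $\Uu g=\int g\,d\nu\cdot 1+Rg$ with $R1=0$ and $\|R^n\|_{\Hh_s(m)\to\Hh_s(m)}\leq Ce^{-\chi n}$. That result uses both effective expansion and volume decay of correlations, so in the paper's argument the latter is used for this inequality, contrary to your remark. The paper then only performs the twist and $L^2$ estimates you also have.

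Be aware, though, that in \eqref{normBound} the paper bounds $\int\!\iint|g(f^n_\omega x)-g(f^n_\omega y)|^2d(x,y)^{-d-2s}\,dm\,dm\,d\mu$ by $\mathrm{Var}_s^2(\Uu^n(g-\int g\,d\nu))$. Jensen gives the reverse inequality, and by the above the left side need not contract, so that step lacks the same ingredient your proof lacks. A correct proof has to feed in the averaged untwisted contraction without first moving the square inside the $\omega$-integral.

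A hint on where that contraction lives: the threshold $d^*>d-2$ corresponds exactly to $\kappa=d-2s\in(d-2,d)$ for $s\in(0,1)$. This is the exponent of the Riesz-energy kernel $d(x,y)^{-(d-2s)}$ describing $\Hh_{-s}$. On that dual, transfer-operator side, convexity of the (positive definite) energy passes to the same-$\omega$ two-point motion in the favourable direction. There, effective expansion, after linearizing near the diagonal, gives a bound of the form $\int d(f^n_\omega x,f^n_\omega y)^{-\kappa}\,d\mu\leq Ce^{-\chi n}d(x,y)^{-\kappa}+C_n$.
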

\begin{proof}
In \cite[Theorem~10.7]{TDFOE-II} the author proves that under the conditions above, for some $s\in (0,1)$% and $\epsilon\in (0,s)$
, $\Uu:\Hh_{s%-\epsilon
}(m)\to \Hh_{s%-\epsilon
}(m)$ can be written as 
$$\Uu g= 1\cdot \int gd\nu+R g,$$
where $R1=0$ and $\|R^n\|_{\Hh_{s%-\epsilon
}(m)\to\Hh_{s%-\epsilon
}(m)}\leq C e^{-\chi n}$, and $\nu\in \mathbb{P}(M)$ is an element of $\Hh_{-s%+\epsilon
}(m)$. %Similarly, by \cite[Theorem~10.7]{TDFOE-II}, for some $\epsilon>0$ s.t. $s-\epsilon>0$, one can conclude that in fact $\wh\mu\in \Hh_{-s+\epsilon}(m)$.

Let $g\in \Hh_s(m)$, then
\begin{align}\label{preNormBound}
   \|\Uu_t^ng\|_s^2=&\|\Uu_t^n g\|_{L^2(m)}^2+ \mathrm{Var}_s^2\Big(\Uu_t^ng\Big)=\|\Uu^n g\|_{L^2(m)}^2+ \mathrm{Var}_s^2\Big(\Uu_t^ng\Big)\nonumber\\
   \leq &\|g\|_{L^2(m)}^2%(\|\wh\mu\|_{-s+\epsilon}+Ce^{-\chi n})^2
   \|\Uu\|_{L^2(m)\to L^2(m)}^{2n}+ \mathrm{Var}_s^2\Big(\Uu_t^ng\Big).
\end{align}
We continue to bound the variation term,
\begin{align}\label{normBound}
&\mathrm{Var}_s^2\Big(\Uu_t^ng\Big)\nonumber\\
\leq&2\int \int \int \frac{|e^{it S_n^\omega A(x)-it S_n^\omega A(y)}-1|^2|g(f_\omega^n(x))|+|g(f_\omega^n(x))-g( f_\omega^n(y))|^2}{d(x,y)^{d+2s}}dmdmd\mu\nonumber\\
\leq &2\int\int  |g(f_\omega(x))|^2\int \frac{|e^{it S_n^\omega A(x)-it S_n^\omega A(y)}-1|^2}{d(x,y)^{d+2s}}dm(y)dm(x)d\mu\nonumber\\
+&2\mathrm{Var}_s^2\Big(\Uu^n(g-\int gd\nu)\Big)\nonumber\\
\leq &2\int\int  |g(f_\omega(x))|^2\int \frac{|e^{it S_n^\omega A(x)-it S_n^\omega A(y)}-1|^2}{d(x,y)^{d+2s}}dm(y)dm(x)d\mu\nonumber\\
+&2\|R^n\|_{\Hh_s(m)\to\Hh_s(m)}^2\|g\|_s^2 \equiv \mathrm{I}+\mathrm{II}.
\end{align}
It is clear that 
\begin{equation}\label{IIbound}
    \mathrm{II}\leq 2\|R^n\|_{\Hh_s(m)\to\Hh_s(m)}^2\|g\|_s^2\leq\|g\|_s^2 2Ce^{-2\chi n}.
\end{equation}
We continue to bound $\mathrm{I}$: Let $\delta>0$ s.t. $\theta-\delta>s+\delta$, and divide $\mathrm{I}$ into integration over $E_n^\delta:=[d(x,y)\leq M_f^{-\frac{1}{\delta}2n}]$ and its complement. Over $E_n^\delta$,
\begin{align*}
    |t S_n^\omega A(x)-t S_n^\omega A(y)|\leq &|t| \|A\|_{\mathrm{H\ddot{o}l}_\theta}d(x,y)^\theta nM_f^n\\
    \leq &|t| \|A\|_{\mathrm{H\ddot{o}l}_\theta}d(x,y)^{\theta-\delta} M_f^{-2n}nM_f^n\leq C_A |t| d(x,y)^{\theta-\delta}.
\end{align*}
And so, the integral of $\mathrm{I}$ over $E_n^\delta$ is bounded by
$$(2C_A|t|)^2\int \frac{1}{d(x,y)^{d-2\delta}}dm(y)=:C_{A,t}'.$$
Next, the integral of $\mathrm{I}$ over the complement of $E_n^\delta$ is bounded by,
$$M_f^{\frac{d+2}{\delta}2n}(1+e^{2n\|A\|_\infty})^2\leq D_{A,t,f,\delta,d}^n,$$
for some constant $D_{A,f}>0$. Thus in total,
\begin{equation}\label{Ibound}
    \mathrm{I}\leq 2(C_{A,t}'+D_{A,f,\delta,d}^n)\|\Uu ^ng\|_{L^2(m)}^2\leq  \wt B^n \|g\|_{L^2(m)}^2,
\end{equation}
for some constant $\wt B=\wt B(f,A,t,\delta,d)>0$. Plugging \eqref{IIbound} and \eqref{Ibound} back in \eqref{normBound}, and then plugging \eqref{normBound} back in \eqref{preNormBound},  we get
\begin{align*}
    \|\Uu_t^ng\|_s^2\leq&\|g\|_{L^2(m)}^2%(\|\wh\mu\|_{-s+\epsilon}+Ce^{-\chi n})^2
   \|\Uu\|_{L^2(m)\to L^2(m)}^{2n}+  \wt B^n \|g\|_{%s-\epsilon
   L^2(m)}^2+\|g\|_s^2 2Ce^{-2\chi n}\\
    \leq &B^{2n}_t\|g\|_{%s-\epsilon
    L^2(m)}^2+B^2_t e^{-2\chi n}\|g\|_s^2,
\end{align*}
or some constant $B_t= B(f,A,t,\delta,d)>0$, where $ \|\Uu\|_{L^2(m)\to L^2(m)}$ is finite by \cite[Lemma~8.5]{TDFOE-II}. Then,
\begin{align*}
    \|\Uu_t^ng\|_s\leq &B^{n}_t\|g\|_{L^2(m)}+B_t e^{-\chi n}\|g\|_s.
\end{align*}
\end{proof}

\begin{cor}[Complex quasi-compactness]\label{CQC}
    For all $t\in \mathbb{R}$, for all $A\in \mathrm{H\ddot{o}l}_\theta(M)$ with $\theta>s$, the operator $\Uu_t:\Hh_{s}(m)\to \Hh_{s}(m)$ given by 
    $$\Uu_tg=e^{itA }\Uu(g),$$
    is quasi-compact with a finite-rank compact part.
\end{cor}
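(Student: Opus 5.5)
The plan is to feed the Lasota--Yorke inequality of Theorem \ref{CLY} into Hennion's theorem, which is the standard route from a Doeblin--Fortet inequality to quasi-compactness. We work on the pair of spaces $\Hh_s(m)\subset L^2(m)$, where $\Hh_s(m)$ carries the strong norm $\|\cdot\|_s$ and $L^2(m)$ supplies the weak norm. Hennion's theorem needs three ingredients:
\begin{enumerate}
\item $\Uu_t$ is bounded on $\Hh_s(m)$;
\item a bound $\|\Uu_t^{n_0} g\|_s\leq r\|g\|_s+ C\|g\|_{L^2(m)}$ holds for some iterate $n_0$, with $r$ strictly smaller than the spectral radius bound we aim for;
\item the unit ball of $\Hh_s(m)$ is relatively compact in $L^2(m)$.
\end{enumerate}
Once these hold, the essential spectral radius of $\Uu_t$ on $\Hh_s(m)$ is at most $\liminf_n (B_te^{-\chi n})^{1/n}=e^{-\chi}$.

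For (1), boundedness follows from Theorem \ref{CLY} applied at a single large $n_0$, combined with $\|\cdot\|_{L^2(m)}\le\|\cdot\|_s$. This gives $\Uu_t^{n_0}$ bounded. To get boundedness of $\Uu_t$ itself, we argue directly: multiplication by the H\"older function $e^{itA}$ (with $\theta>s$) is bounded on $\Hh_s(m)$, and $\Uu$ is bounded on $\Hh_s(m)$ by the decomposition $\Uu=1\cdot\nu(\cdot)+R$ quoted in the proof of Theorem \ref{CLY}. For (3), the compact embedding $\Hh_s(m)\hookrightarrow L^2(m)$ on a closed manifold is the Rellich--Kondrachov theorem for fractional Sobolev (Gagliardo) spaces, which applies since $s\in(0,1)$ and $M$ is compact.

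For (2), note that the weak term in Theorem \ref{CLY} carries the factor $B_t^n$, which grows with $n$. Hennion's theorem in its standard form only requires the inequality for one fixed iterate $n_0$, or more precisely that $\liminf_n(\text{strong coefficient})^{1/n}$ be controlled while the weak coefficient is finite for each $n$. So the growth of $B_t^n$ is harmless, and we get essential spectral radius at most $e^{-\chi}$. The spectral radius itself is at least this, or we simply use that it is finite. Quasi-compactness then means that the spectrum outside the disc of radius $e^{-\chi}$, or any larger radius $\rho$, consists of finitely many eigenvalues of finite multiplicity.

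It remains to produce a finite-rank compact part. Fix $\rho\in(e^{-\chi},\ \text{spectral radius})$; if the spectral radius is at most $e^{-\chi}$, the statement holds with zero compact part. Let $\Pi$ be the Riesz projection onto the finitely many eigenvalues of modulus $>\rho$. Then $\Pi$ has finite rank, and $\Uu_t=\Uu_t\Pi+\Uu_t(I-\Pi)$, where $\Uu_t\Pi$ is finite rank and $\Uu_t(I-\Pi)$ has spectral radius $\le\rho$. This is exactly quasi-compactness with a finite-rank compact part.

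The main obstacle is checking the precise hypotheses of Hennion's theorem. Theorem \ref{CLY} is stated only for $n$ large, and we need boundedness of $\Uu_t$ on $\Hh_s(m)$ itself, not only of its iterates, which is why (1) argues boundedness of $\Uu_t$ directly. We also need the weak norm $L^2(m)$ to be continuous on $\Hh_s(m)$, which is immediate from the definition of $\|\cdot\|_s$.
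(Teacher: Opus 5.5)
Your proposal is correct and takes the paper's own route: the Lasota--Yorke inequality of Theorem \ref{CLY} (strong coefficient $B_te^{-\chi n}$, finite weak coefficient $B_t^n$) and the compact embedding $\Hh_s(m)\hookrightarrow L^2(m)$ are fed into Hennion's theorem, giving essential spectral radius at most $e^{-\chi}$. You also fill in details the paper leaves implicit, namely boundedness of $\Uu_t$ on $\Hh_s(m)$ via the H\"older multiplier with $\theta>s$ and the Riesz projection that produces the finite-rank part; the only slip is your aside that the spectral radius is at least $e^{-\chi}$, which is unjustified for $t\neq0$ but harmless because your case split never uses it.
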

\begin{proof}
    Since the unit ball of $\Hh_s(m)$ embeds compactly into $L^2(m)$, this follows from Theorem \ref{CLY} by a direct application of the Henion theorem.
\end{proof}

\begin{prop}[Complex operators contraction]\label{COC} Assume volume decay of correlations with a measure $\nu$ for the two-point motion and assume effective expansion on average for the one-point motion. Then, for some $s\in (0,1)$, for all $A\in \mathrm{H\ddot{o}l}_\theta(M)$ with $\theta>s$, if for all $t\in\mathbb{R}\setminus \{0\}$ $\wh A(x,y):=A(x)-A(y)$ cannot be put in the form 
     $$\frac{e^{it \wh A}\Uu(h)}{\lambda h}=1\text { }\wh\mu\text{-a.e.},$$
     with $h\in \Hh_s(m\times m)$ and $\lambda\in\mathbb{C}^*$, then for all $t\in\mathbb{R}\setminus\{0\}$ the spectral radius of $\Uu_t$ on $\Hh_s(m\times m)$ is strictly less than $1$.
\end{prop}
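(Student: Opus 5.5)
The plan is to run the standard spectral argument for twisted transfer/Koopman operators on the two-point space. Throughout, $\Uu_t$ denotes the operator on $\Hh_s(m\times m)$ given by $\Uu_t h=e^{it\wh A}\Uu^{(2)}(h)$, where $\Uu^{(2)}h(x,y)=\int h(f_\omega x,f_\omega y)\,d\mu(\omega)$ is the averaged Koopman operator of the two-point motion.

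\textbf{Step 1: quasi-compactness.} Proposition \ref{onePointTwoPoints} shows that effective expansion on average for the one-point motion passes to the two-point motion (with the same $\kappa>d-2$, hence $\kappa>2d-2$ is not needed in the product dimension $2d$ beyond what the Sobolev estimate of \cite[Theorem~10.7]{TDFOE-II} uses). Combined with the assumed volume decay of correlations for the two-point motion, this lets me repeat the proof of Theorem \ref{CLY} verbatim on $M\times M$, with $A$ replaced by $\wh A$. This gives a Lasota--Yorke inequality
$$\|\Uu_t^nh\|_s\le B_t^n\|h\|_{L^2(m\times m)}+B_te^{-\chi n}\|h\|_s.$$
Since $\Hh_s(m\times m)$ embeds compactly in $L^2(m\times m)$, Hennion's theorem (as in Corollary \ref{CQC}) yields that the essential spectral radius of $\Uu_t$ is at most $e^{-\chi}<1$. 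So any spectral value of modulus $\ge e^{-\chi}$ is an isolated eigenvalue of finite multiplicity.

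\textbf{Step 2: spectral radius is at most $1$.} Since $|e^{it\wh A}|=1$, we have $|\Uu_t^n h|\le \Uu^n|h|$ pointwise, so the $L^2(m\times m)$ (or $L^\infty$) norms of $\Uu_t^n$ are controlled by those of $\Uu^n$. Those are bounded uniformly in $n$ thanks to the decomposition $\Uu^n=\nu(\cdot)1+R^n$ from \cite[Theorem~10.7]{TDFOE-II}, applied on the product space. Feeding this into the Lasota--Yorke inequality, with $n$ fixed large and iterated, gives $\sup_n\|\Uu_t^n\|_s<\infty$ up to subexponential factors. Hence $r(\Uu_t)\le1$.

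\textbf{Step 3: excluding unimodular eigenvalues.} Suppose $r(\Uu_t)=1$ for some $t\ne0$. By Step 1 there is $h\in\Hh_s(m\times m)$, $h\neq0$, and $|\lambda|=1$ with $e^{it\wh A}\Uu(h)=\lambda h$. This is exactly the forbidden form in the hypothesis, which would already finish the proof. I would nevertheless make sure the equality holds $\wh\mu$-a.e. in the strong, pathwise sense intended. Taking absolute values gives $|h|\le\Uu|h|$, so $|h|\le\Uu^n|h|\to\int|h|\,d\nu$. Integrating against the stationary measure $\nu$ forces equality throughout, and strict convexity of the modulus then forces $e^{it\wh A}h\circ f_\omega=\lambda h$ for $\mu$-a.e. $\omega$ and $\nu$-a.e. point. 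This contradicts the hypothesis. Therefore every eigenvalue has modulus $<1$, and together with Step 1, $r(\Uu_t)<1$.

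The main obstacle is Step 2/3 bookkeeping. One must control $\Uu$ in $L^2(m\times m)$, since $m\times m$ is not invariant, and one must pass from a.e. with respect to $m\times m$ to a.e. with respect to $\nu$ and $\wh\mu$. I would first handle this by working with $\nu$-integrals of $|h|^2$ using the decay-of-correlations assumption. If that fails, I would directly invoke the hypothesis in its operator form, which already rules out eigenvalues $\lambda\in\mathbb C^*$ on the unit circle.
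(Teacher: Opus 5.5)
Your proposal is correct and follows essentially the same route as the paper. Both arguments obtain quasi-compactness of the twisted two-point operator from Proposition \ref{onePointTwoPoints}, Theorem \ref{CLY} and Corollary \ref{CQC}. Both then use the domination $|\Uu_t^n h|\le \Uu^n|h|$ together with $\Uu^n|h|\to\int|h|\,d\nu$ to get spectral radius at most $1$, and use the same domination to show that a unimodular eigenfunction satisfies $|h|=\int|h|\,d\nu$ $\nu$-a.e.

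One correction on the role of that last step. In the paper it shows that $h$ does not vanish: the constant is positive because $h\neq0$, and this is what makes the quotient in the hypothesis well defined. So the eigen-equation becomes the forbidden form only after this step, contrary to your remark that it "would already finish the proof". Your strict-convexity upgrade to a pathwise equation is not needed.
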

\begin{proof}
    Recall Proposition \ref{onePointTwoPoints}. We therefore apply Theorem \ref{CLY} and Corollary \ref{CQC} to the two-point motion with $\wh A$. Let $t\in\mathbb{R}\setminus\{0\}$. By Corollary \ref{CQC}, $\Uu_t$ admits an eigen-functions $h_t\in \Hh_s(m)$ with an eigen-value $\lambda_t$ s.t. $|\lambda_t|$ is the spectral radius of $\Uu_t$ on $\Hh_s(m)$. Recall also that $\Uu_0=\Uu$ admits  an eigen-functions $h_0\equiv 1\in \Hh_s(m)$ with an eigen-value $1$ s.t. $1$ is the spectral radius of $\Uu$ on $\Hh_s(m)$ (by \cite[Theorem~10.7]{TDFOE-II}). We prove that $|\lambda_t|<1$. Notice that
    \begin{align*}
       |\lambda_t|^n=&| \Uu_t^nh_t|\leq \Uu^n|h_t|=\int |h_t|d\nu+O(e^{-\chi n})\leq \|\nu\|_{-s}\| |h_t| \|_s+O(e^{-\chi n})\\
       \leq&\|\nu\|_{-s}\| h_t \|_s+O(e^{-\chi n}),
    \end{align*}
 and so $|\lambda_t|\leq 1$. We may assume that $|\lambda_t|=1$, and derive a contradiction. In that case, for all $n$,
 $$|h_t|=|\Uu_t^n h_t|=|\Uu^n h_t|\leq \Uu^n |h_t|\xrightarrow[]{}\int |h_t|d\nu,$$
 which implies that $|h_t|$ is constant $\nu$-a.e. and so in particular, $h_t\neq 0$ $\nu$-a.e. Thus the identity 
 $$\frac{e^{it \wh A}\Uu(h_t)}{\lambda_t h_t}=1,$$
 is well-defined and proper, and in a contradiction to our assumption.
\end{proof}

\begin{remark}
 Note, Proposition \ref{COC} applies also to the function $A$ and the one-point motion under the same conditions and the Sobolev spaces $\Hh_s(m)$.   
\end{remark}

\begin{cor}
    Suppose that $A$ is not a coboundary with respect to $\wh F$ and  the conditions of Proposition \ref{COC} hold. Then $S_n^\omega A$ obeys the quenched CLT and the non-lattice LLT with respect to the measure $m$.
\end{cor}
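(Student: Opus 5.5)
The plan is to verify the hypotheses of Theorem \ref{BE} (with $a=1$ and with $a=0$, $T_n=\ln n$) and of Theorem \ref{LLT1}, using the spectral information from Theorem \ref{CLY}, Corollary \ref{CQC} and Proposition \ref{COC}. We apply this information both to the one-point motion with $A$ on $\Hh_s(m)$ and to the two-point motion with $\wh A$ on $\Hh_s(m\times m)$; the latter is legitimate by Proposition \ref{onePointTwoPoints}. Throughout, $\wh\mu=\mu\times m$ and $\wh\mu^{(2)}=\mu\times m\times m$. By the i.i.d.\ structure (Lemma \ref{lemlem}) the characteristic functions become powers of the twisted averaged operators:
$$\phi_n(t)=\int \Uu_{t/\sqrt n}^n 1\,dm,\qquad \varphi_n(t)=\int \Uu_{t/\sqrt n}^n 1\,d(m\times m),$$
with $\Uu_t g=e^{itA}\Uu g$ in the one-point case and $e^{it\wh A}\Uu g$ in the two-point case.

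\emph{Step 1: small frequencies.} At $t=0$, \cite[Theorem~10.7]{TDFOE-II} gives that $\Uu$ has a simple leading eigenvalue $1$ with projector $g\mapsto\int g\,d\nu$ and a spectral gap on $\Hh_s$. The map $t\mapsto\Uu_t$ is analytic in operator norm on $\Hh_s$, because multiplication by $e^{itA}$ is analytic when $A\in\mathrm{H\ddot{o}l}_\theta$ with $\theta>s$. Nagy--Kato perturbation theory (as in \cite{HH}) then yields $\Uu_t^n=\lambda(t)^n\Pi_t+O(\kappa^n)$ for $|t|\le\delta$. We normalize $A$ so that $\nu(A)=0$; this is the natural centering, and for $\wh A$ it holds automatically by symmetry. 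Then $\lambda(t)=1-\sigma^2t^2/2+O(|t|^3)$. Here $\sigma^2>0$ exactly when $A$ is not a coboundary for $\wh F$, and the two-point variance equals $2\sigma^2$. After rescaling $A$ so that $\sigma=1$, the standard expansions give \eqref{CF1} and \eqref{CF2} with $\eta=1/2$ for $T_n=n^\zeta$, $\zeta<1/2$ small. They also give the Gaussian bound $|\int e^{i\theta S_n\wh A}d\wh\mu^{(2)}|\le Ce^{-c\theta^2n}$ for $|\theta|\le\delta$.

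\emph{Step 2: growth condition.} For the $L^1$ condition we use Proposition \ref{CondProp}, since $m$ does not depend on $\omega$. The exponential decay of $\mathrm{Cov}(\widetilde A,\widetilde A\circ F_2^n)$ follows from the spectral gap of $\Uu$ on $\Hh_s(m\times m)$, applied to the Hölder function $\widetilde A(x,y)=A(x)A(y)$. This gives the growth bound for any $\delta_0>0$, so with $\alpha$ large all the hypotheses of Theorem \ref{BE} hold. Consequently the quenched CLT holds, as the case $a=1$ of Theorem \ref{BE}.

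\emph{Step 3: large frequencies.} For $\delta\le|\theta|\le N$, Proposition \ref{COC} (we treat its aperiodicity hypothesis on $\wh A$ as part of "the conditions of Proposition \ref{COC}") gives spectral radius of $\Uu_\theta$ on $\Hh_s(m\times m)$ strictly below $1$ for each $\theta\ne0$. To get uniform decay $\|\Uu_\theta^n\|\le Ce^{-cn}$ over the compact set $[\delta,N]$ we use upper semicontinuity of the spectral radius under norm-continuous perturbations, together with compactness. Here Theorem \ref{CLY}'s Lasota--Yorke inequality (with $B_t$ locally bounded in $t$) controls the essential spectral radius uniformly. This yields the integral bound $O(n^{-a})$ for every $a$, and Theorem \ref{LLT1} then gives the non-lattice LLT.

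The main obstacle I expect is this uniformity in Step 3. Theorem \ref{CLY} gives constants $B_t$ that depend on $t$ and an $L^2$ term with growth rate $B_t^n$, so the standard Hennion argument must be run on a fixed iterate $n_0$ with uniform constants for $t$ in a compact set. A secondary subtlety is that the Hölder assumption and the nondegeneracy $\sigma^2>0$ must be checked for $\wh A$ on the two-point space, where the coboundary condition for $A$ has to be transferred to $\wh A$.
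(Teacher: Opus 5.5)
Your overall route matches the paper's: a spectral gap of $\Uu$ at $t=0$ with analytic perturbation for small frequencies, and Proposition \ref{COC} plus compactness for $\delta\le|\theta|\le N$. Your Step 3 is fine; norm-continuity of $t\mapsto\Uu_t$ and upper semicontinuity of the spectral radius already suffice, without the Lasota--Yorke constants.

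The genuine gap is the assertion in Step 1 that ``the two-point variance equals $2\sigma^2$''. It does not follow from the spectral gap, yet it is exactly what \eqref{CF1} and \eqref{CF2} need in order to hold with the same normalization. Write $\sigma^2,\wh\sigma^2$ for the annealed asymptotic variances of $S_nA$ and $S_n\wh A$. Since $x,y$ are independent given $\omega$ under $\wh\mu^{(2)}=\mu\times m\times m$,
\begin{equation*}
\int (S_n\wh A)^2\,d\wh\mu^{(2)}=2\int (S_nA)^2\,d\wh\mu-2\int\Big(\int S_n^\omega A\,dm\Big)^2d\mu(\omega).
\end{equation*}
Hence $\wh\sigma^2=2\sigma^2-2D$, where $D=\lim_n\frac1n\int\big(\int S_n^\omega A\,dm\big)^2d\mu(\omega)$ is the asymptotic variance of the random drift $\omega\mapsto\sum_{j<n}\big((f^j_\omega)_*m\big)(A)$. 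Moreover, $D=0$ is necessary for the conclusion. If $\varphi_{\omega,n}(t)\to e^{-bt^2/2}$ for a.e.\ $\omega$, bounded convergence gives $\phi_n(t)\to e^{-bt^2/2}$ and $\varphi_n(t)=\int|\varphi_{\omega,n}(t)|^2d\mu\to e^{-bt^2}$. This forces $b=\sigma^2$ and $\wh\sigma^2=2\sigma^2$.

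If every $f_a$ preserves $m$, the argument closes. Then $\nu=m$ and $\int S_n^\omega A\,dm=n\,m(A)=0$, so $D=0$. Also $\wh\mu$ is $F_1$-invariant, so your use of Proposition \ref{CondProp} is legitimate, and your Steps 1--3 go through as written.

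However, the corollary's hypotheses allow dissipative $f_a$. When the $f_a$ have no common invariant measure, the pushforwards $(f^j_\omega)_*m$ stay genuinely random (they track the random sample measures), and $D$ is in general positive. In that case no rescaling of $A$ makes both \eqref{CF1} and \eqref{CF2} hold, and the deterministically centred quenched CLT itself fails; only a fibrewise-centred version can hold.

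The same tension appears in your Step 2. With $\wh\mu_\omega=m$, Proposition \ref{CondProp} requires $m(A)=0$, not $\nu(A)=0$. Also, $\int\widetilde A\cdot(\widetilde A\circ F_2^n)\,d\wh\mu^{(2)}$ tends to $m(A)^2\,\nu^{(2)}(\widetilde A)$ rather than to $0$, where $\nu^{(2)}$ is the two-point stationary measure. The two centerings are compatible only if $m(A)=\nu(A)$.

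The paper's sketch passes over this point too: it invokes perturbation theory separately for $\phi_n$ and $\varphi_n$, each with its own variance. To make the proof complete you need an extra hypothesis, for instance that the $f_a$ preserve $m$, or directly that $D=0$.
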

\begin{proof}
Write $\wh\mu:=\mu\times m$. Using that
\begin{equation}\label{Four1}
\int e^{itS_nA}d\wh\mu=\int \mathcal U_t^n1 d\wh\mu,    
\end{equation}
and
\begin{equation}\label{Four2}
\int e^{itS_n\wh{A}}d(\wh \mu\times m)=\int \mathcal U_t^n1\,d(\wh \mu\times m),    
\end{equation}
Now, as $\mathcal U$ admits a spectral gap and $t\mapsto\mathcal U_t$ is analytic, we see that the conditions of Theorem \ref{BE} hold (see \cite{HH}). 
Indeed, by analytic perturbation theory there exist $\delta_0,C>0$ and $\rho\in(0,1)$ such that for all $t\in[-\delta_0,\delta_0]$ there are $\lambda_t\in\mathbb C, h_t\in \Hh_s, \nu_t\in \Hh_{-s}$ such that 
$$
\|\mathcal U_t^n-\lambda_t^n\cdot (\nu_t\otimes h_t)\|_{\Hh_s\to\Hh_s}\leq C\rho^n,
$$
where $\otimes$ denotes a tensor product. Moreover, $\lambda_0=1$ and $t\mapsto\lambda_t, t\mapsto h_t$ and $t\mapsto\nu_t$ are analytic in $t$. We thus conclude that the functions 
$\Lambda_n(t)=\ln\left(\int e^{itS_nA}d\wh\mu\right)$ and $\wh{\Lambda}_n(t)=\ln\left(\int e^{itS_n\wh{A}}d\wh\mu\times m\right)$ satisfy
$$
\sup_{t\in[-r_0,r_0]}\max\left(\left|\Lambda_n^{'''}(t)\right|,\left|\wh{\Lambda}_n^{'''}(t)\right|\right)\leq Cn
$$
for some $r_0,C>0$. We thus conclude that \cite[Assumption 23]{DolgHafBE0} is in force. Thus, for all $a\in[0,1]$ there exists $\zeta>0$ such that the conditions on Theorem \ref{BE} are in force, see for instance the arguments in the proof of \cite[Proposition 29]{DolgHafBE0}.

Next, note that if the spectral radius of $\mathcal U_t, t\not=0$ is smaller than $1$, then for every compact set $K\subseteq\mathbb R\setminus\{0\}$ there are $C_K,c_K>0$ such that 
$$
\Big|\int e^{itS_n\wh{A}}d(\wh \mu\times m)\Big|\leq \sup_{t\in K}\|\mathcal U_t^n\|_{\Hh_s(m\times m)\to\Hh_s(m\times m)}\leq C_Ke^{-c_K n}.
$$
Therefore, we are in the setting of both Theorem \ref{BE} and Theorem \ref{LLT1} with $\wh\mu^{(2)}:=\wh \mu\times m$, and the corollary follows.
\end{proof}

Now, as we commented in Remark \ref{Rem2} also get limit theorems for the skew products.
\begin{cor}
   Suppose that $A$ is not a coboundary with respect to $\wh F$ and  the conditions of Proposition \ref{COC} hold with $A$ instead of $\wh A$. Then $S_nA=\sum_{j=0}^{n-1}A\circ\wh F^j$, where $\wh{F}(\omega,x)=(T\omega, f_\omega x)$, obeys the quenched CLT and the non-lattice LLT with respect to the measure $\mu\times m$. 
\end{cor}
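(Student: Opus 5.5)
This corollary concerns the annealed sums $S_nA$ on $(\Sigma\times M,\mu\times m)$, not the fiberwise sums $S_n^\omega A$. The plan is to apply the classical Nagaev--Guivarc'h spectral method directly to the one-point operators $\Uu_t g=e^{itA}\Uu(g)$ on $\Hh_s(m)$, as suggested in Remark \ref{Rem2}, rather than pass through Theorem \ref{BE} or Theorem \ref{LLT1}. Since $\mu$ is Bernoulli and $f_\omega=f_{\omega_0}$, the identity $\Uu_n=\Uu^n$ gives
$$\int e^{itS_nA}\,d(\mu\times m)=\int \Uu_t^n 1\,dm,$$
exactly as in \eqref{Four1}. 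So both the CLT and the LLT reduce to controlling $\Uu_t^n$ on $\Hh_s(m)$.

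\textbf{Step 1: the spectral picture near $t=0$.} By \cite[Theorem~10.7]{TDFOE-II} (used in the proof of Theorem \ref{CLY}), $\Uu=1\otimes\nu+R$ has a spectral gap on $\Hh_s(m)$. Since $A\in\mathrm{H\ddot{o}l}_\theta$ with $\theta>s$, multiplication by $e^{itA}$ is bounded on $\Hh_s(m)$ and analytic in $t$. Analytic perturbation theory then gives, for $|t|\le\delta_0$,
$$\Uu_t^n=\lambda_t^n\,\nu_t\otimes h_t+O(\rho^n),$$
exactly as in the proof of the previous corollary. One checks $\lambda_t=1+it\nu(A)-\tfrac{\sigma^2}{2}t^2+O(t^3)$, with $\nu(A)=0$ after centering. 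The key point is $\sigma^2>0$. This is the standard Green--Kubo argument: $\sigma^2=0$ forces $A$ to be an $L^2$ coboundary for $\wh F$, which is excluded by hypothesis. I expect this to be the main technical step, because $m$ is not invariant and the limiting measure is $\nu$. The identification of $\sigma^2$ must therefore be carried out through $\nu$ and the spectral projector, and the resulting coboundary lifted to $\Sigma^+\times M$. With $\sigma^2>0$, the expansion of $\lambda_t^n$ at scale $t/\sqrt n$ gives the CLT.

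\textbf{Step 2: away from $t=0$.} I would apply Proposition \ref{COC} to the one-point motion, as permitted by the remark after that proposition, with $A$ in place of $\wh A$. This gives spectral radius of $\Uu_t$ strictly less than $1$ for every $t\neq 0$. Upper semicontinuity of the spectral radius, together with the uniform Lasota--Yorke bound of Theorem \ref{CLY} for $t$ in a compact set, upgrades this to
$$\sup_{t\in K}\|\Uu_t^n\|\le C_Ke^{-c_Kn}\quad\text{for each compact } K\subset\mathbb R\setminus\{0\}.$$
The uniformity in $t$ needs care, since the constant $B_t$ in Theorem \ref{CLY} depends on $t$. One must check that it is locally bounded in $t$; the proof of Theorem \ref{CLY} shows it is.

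\textbf{Step 3: conclusion.} Using Breiman's reduction to test functions $G$ with compactly supported Fourier transform, as in the proof of Theorem \ref{LLT1}, I would split the Fourier integral into $|\theta|\le\ln n$, $\ln n\le|\theta|\le\delta_0\sqrt n$ and $\delta_0\sqrt n\le|\theta|\le N\sqrt n$. Step 1 handles the first two ranges and Step 2 the third. This yields the non-lattice LLT for $S_nA$ under $\mu\times m$, in the same form as Theorem \ref{LLT1} but without the random $\omega$ and without Borel--Cantelli. The quenched versions then follow from the preceding corollary.
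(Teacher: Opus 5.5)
Your proposal is correct and follows essentially the route the paper intends. The paper gives no separate proof here; via Remark \ref{Rem2} it points to running the spectral argument of the preceding corollary on the one-point operators through the standard Hennion--Herv\'e machinery: analytic perturbation of $\Uu_t$ on $\Hh_s(m)$ near $t=0$, together with Proposition \ref{COC} applied to $A$ for $t\neq 0$. That is what you do.

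One caveat concerns your closing sentence, which is unnecessary and not justified as stated. The statement concerns $S_nA$ under $\mu\times m$, so the word ``quenched'' there is a slip. Obtaining quenched results for $S_n^\omega A$ from the preceding corollary would require its two-point hypotheses on $\wh A$, and these are not assumed here.
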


\section{Application to Non-uniformly expanding iid dynamical systems and functions with bounded variation}\label{App3}
Let us consider the following example from \cite{ANV}.  Let $A$ be a finite set and let $p=(p_i)_{i\in A}$ be a probability vector. Let us consider the Bernoulli shift $(\Sigma,\mathcal F,\mu,T)$ with $\Sigma=A^{\mathbb Z}$ and $\mu=p^{\mathbb Z}$. Let $f_\omega:[0,1]\to[0,1]$ be piecewise $C^2$ maps and let $\lambda(\omega)=\inf|f_\omega|$. We assume that $f_\omega$ depends only on the $0$-th coordinate. Define 
$$
\mathcal L_\omega g(x)=\sum_{f_\omega y=x}\frac{g(y)}{|f_\omega'(y)|}.
$$
Let $v(\cdot)$ denote the standard variation of a function on $[0,1]$. Define 
$$
P_0g(x)=\int\mathcal L_\omega g(x)d\mu(\omega).
$$
Let us take a bounded measurable function $A:[0,1]\to \mathbb R$ with bounded variation and set 
$$
P_{t}g(x)=P(e^{itg})=\int\mathcal L_\omega(ge^{it A})(x)d\mu(\omega).
$$
Then arguing like in \cite{ANV} and using that $|e^{it S_nA}|=1$ to estimate $v(S_n^\omega A\circ y)$ for an inverse branch $y$ of $f_\omega^n$  we see that if 
$$
\Lambda=\sum_{\omega_0}\frac{p_{\omega_0}}{\lambda(f_{\omega_0})}<1
$$
then there the operators $P_{t}$ are quasi compact with respect to the norm 
$$
\|g\|_{BV}=\|g\|_{L^1(m)}+v(g).
$$
Moreover, $P_0$ has multiplicity one. Let us define 
$$
\widehat{\mathcal L}_\omega g(x,x')=\sum_{f_\omega y=x, f_\omega y'=x'}\frac{g(y,y')}{|f_\omega'(y)f_\omega'(y')|}.
$$
We define $\widehat   P_t$ similarly but with the maps $f_\omega\times f_{\omega}$ and the function $\widehat A(x,y)=A(x)-A(y)$.  Then the operators $\widehat P_t$ are quasi compact and  $\widehat P_0$ has multiplicity one.
Let $\widehat{\mu}=\mu\times m$ and $\widehat{\mu}^{(2)}=\mu\times m\times m$, where $m$ is the Lebesgue measure on $[0,1]$. Then
$$
\mathbb E_{\widehat\mu}[e^{itS_nA}]=\int P_t^n\textbf{1}d\widehat{\mu}
$$
and 
$$
\mathbb E_{\widehat{\mu}^{(2)}}[e^{itS_n\widehat A}]=\int \widehat{P}_t^n\textbf{1}d\widehat{\mu}^{(2)}.
$$
Let $h$ and $\widehat h$ be the leading eigenfunctions of $P_0$ and $\widehat P_0$. Then by applying the general theory in \cite{HH} with the Markov operators 
$$
Qg(x)=P_0(gh)/h, \,\widehat{Q}g(x)=\widehat{P}_0(g\widehat{h})/\widehat{h}
$$
and the norm $\|\cdot\|_{BV}$
we see that the conditions of Theorem \ref{BE} are in force with $a=0,1$ and $T_n=o(\sqrt n)$ unless $A$ is a coboundary with respect to the skew product. Moreover, if $(X_n)$ is the stationary Markov chain with Markov operator $\widehat{Q}$ then conditions of Theorem \ref{LLT1} are in force if $\widehat A$ cannot be written as 
$$
\widehat A(X_1)=\widehat c+g(X_0)-g(X_1)+h Z(X_1)
$$
where $c$ is a constant, $g$ is a function with $\|g\|_{BV}<\infty$ and $Z$ is an integer valued function. When $A$ is integer valued then the conditions of Theorem \ref{LLT2} are in force if $A$ cannot be written in the above form with $h>\pi$.

\begin{remark}
Several other (high dimensional) examples from \cite{ANV} can be worked out, but in order not to overload the paper we decided to focus on one dimensional Lasota-Yorke maps.
\end{remark}

\section{Application to random hyperbolic dynamics with hyperbolic base maps}\label{App4}
Suppose that the random maps $f_\omega$ are uniformly hyperbolic and that $T$ is also uniformly hyperbolic.
In this case the maps $F_1$ and $F_2$ are hyperbolic. Assume that they are also topologically mixing. 
Then $F_1$ and $F_2$ admit a symbolic representation (coding) by topologically mixing subshift of finite type. In this case for every H\"older continuous function $\psi$ on $\Sigma\times X$ there exists a unique $F_1$-invariant measure $\widehat\mu_1=\mu_\psi$ which maximizes the free energy corresponding to $\psi$ (see \cite{Bowen}). When taking $\psi$ to be the geometric potential we get that $\mu$ is the unique SRB measure for $F_1$. In this case the conditions of Theorem \ref{BE} hold true (see \cite{GH}).

The conditions of Theorem \ref{LLT1} hold true when, in addition to the above assumptions,  the function $\widehat{A}$ cannot be written in the form 
$$
\widehat A=c+H-H\circ F_2+hZ
$$
for constants $c,h>0$, a function $H$ with $\|H\|<\infty$ and an integer valued function $Z$ (here $\|\cdot\|$ is the correspondin H\"older norm). We refer to \cite{GH,HH}. Such assumptions on $\widehat A$ are often refereed to as ``non-aperiodicity" or ``non-arithmeticity" of $\widehat A$. 
 The conditions of Theorem \ref{LLT2} (see again \cite{GH,HH}) are satisfied if instead of the above relation $\widehat A$ is integer valued and it cannot be written in the form 
$$
\widehat A=c+H-H\circ F_2+hZ
$$
for constants $c>0$ and $h>1$ a function $H$ with $\|H\|<\infty$ and an integer valued function $Z$. 
Such assumptions on $\widehat A$ are often refereed to as non irreducibility of an integer valued function $\widehat A$.

\section{Application to backward contracting on average maps $f_\omega$.}\label{App5}

\begin{theorem}\label{Thm}
Suppose $X$ is a compact metric space.
Let $T$ be a one-sided subshift of finite type and $\mu$ is a Gibbs measure associate with a H\"older continuous function $\varphi$ with exponent $\alpha$. Suppose that there exist $\epsilon,C,\lambda>0$  such that for every $x,y\in X$ with $0<d(x,y)\leq \epsilon$, we have
\begin{equation}\label{CAD0}
\int \frac{d^\alpha(f_\omega^{-N}x, f_\omega^{-N}y)}{d^\alpha(x,y)}d\mu(\omega)<1/C^2    
\end{equation}
where $C$ is the constant from the Gibbs property.
Assume also that  that $f_\omega$ are invertible and depend only on the zero-th coordinate. Let $\widehat\mu$ be an $F_1$-invariant probability measure on $\Sigma\times X$.
Then the conclusion of Theorem \ref{DEC} holds.

Moreover, the conditions of Theorems \ref{BE} and \ref{LLT1} 
holds true if $\widehat A$ is aperiodic in the sense that if cannot be written in the form 
$$
\widehat A\circ\pi_2=c+H-H\circ F_2+hZ
$$
for constants $c,h>0$, a function $H$ with $\|H\|<\infty$ and an integer valued function $Z$. 

If $A$ is integer valued then the conditions of Theorems \ref{BE} and \ref{LLT2} are in force
if $\widehat A$ is irreducible in the sense that if cannot be written in the form 
$$
\widehat A\circ\pi_2=c+H-H\circ F_2+hZ
$$
for constants $c>0$, $h>1$ a function $H$ with $\|H\|<\infty$ and an integer valued function $Z$. 
\end{theorem}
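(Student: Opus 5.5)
The plan is to realize the skew products $F_1$ and $F_2$ as systems governed by a transfer operator that has a spectral gap on a space of Hölder functions in $(\omega,x)$, respectively $(\omega,x,y)$. We then read off the hypotheses of Theorems \ref{DEC}, \ref{BE}, \ref{LLT1} and \ref{LLT2} from perturbation theory, as in Sections \ref{App1}--\ref{App4}.

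Since $T$ is a one-sided subshift and $f_\omega=f_{\omega_0}$ is invertible, the preimages of $(\omega,x)$ under $F_1$ are exactly $(a\omega, f_a^{-1}x)$ over admissible symbols $a$. We therefore work with the semi-Ruelle operator
\[
(\L g)(\omega,x)=\sum_{a\omega\ \mathrm{admissible}} e^{\varphi(a\omega)}\, g(a\omega,f_a^{-1}x),
\]
with $\varphi$ normalized so that $\L1=1$. Its $n$-th power averages over inverse branches with Gibbs weights, comparable up to $C$ to $\mu$ on cylinders.

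\textbf{Step 1: Lasota--Yorke inequality.} We prove a Lasota--Yorke inequality for $\L$ in the norm $\|g\|_\infty+|g|_\alpha$. Here $|g|_\alpha$ is the Hölder seminorm in the combined metric $d_\Sigma(\omega,\omega')+d(x,y)$, localized at scale $\epsilon$ in $x$. The $\omega$-variation contracts as usual for SFTs. For the $x$-variation we bound
\[
|\L^N g(\omega,x)-\L^N g(\omega,y)|\le |g|_\alpha\sum_{|w|=N}e^{S_N\varphi(w\omega)}\,d^\alpha\!\left(f_w^{-N}x,f_w^{-N}y\right)+\text{(terms in }\|g\|_\infty).
\]
By the Gibbs property the weighted sum is at most $C^2$ times the $\mu$-integral in \eqref{CAD0}, which is $<1$. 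Iterating along multiples of $N$ gives
\[
|\L^{kN}g|_\alpha\le \theta^k|g|_\alpha+C'\|g\|_\infty,\qquad \theta<1 .
\]
Combined with Arzelà--Ascoli, Hennion's theorem yields quasi-compactness. Peripheral spectrum is ruled out as in Proposition \ref{COC}, using mixing of the SFT. This gives a spectral gap and exponential decay of correlations for Hölder observables for $\widehat\mu$, which I identify with the unique invariant measure in the image of the leading eigenprojection.

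\textbf{Step 2: the two-point motion.} The same argument applies verbatim to $F_2$: the contraction \eqref{CAD0} in each coordinate separately gives contraction in $d(x,x')+d(y,y')$. Hence $\widehat{\mathcal L}$ has a spectral gap and $F_2$ is exponentially mixing for Hölder $G,H$. For Theorem \ref{DEC} the function $G(\omega,x,y)$ involves $\widehat\mu_\omega(g)$, so I need $\omega\mapsto \widehat\mu_\omega(g)$ to be Hölder. I would derive this as in Proposition \ref{ppp}: backward contraction gives a condition of the type \eqref{Cond} through the disintegration of the equivariant measure, since pushing forward along backward compositions contracts on average. This is plausible but needs care.

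\textbf{Step 3: perturbations.} For Theorems \ref{BE}, \ref{LLT1} and \ref{LLT2}, the twisted operators $\L_t=\L(e^{itA}\cdot)$ and $\widehat{\L}_t$ satisfy the same Lasota--Yorke bound uniformly for $t$ on compacts, because $A$ is Hölder and $|e^{itA}|=1$. Analytic perturbation theory \cite{HH} then gives \eqref{CF1} and \eqref{CF2} via \cite{DolgHafBE0} as in Section \ref{App1}. For $t\neq0$, the spectral radius of $\widehat{\L}_t$ equals $1$ only if $\widehat A$ satisfies the stated cohomological equation, by the standard argument \cite{GH,HH}. Aperiodicity then yields exponential decay uniform on compacts away from $0$, giving the hypotheses of Theorem \ref{LLT1}. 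In the lattice case we restrict to $|\theta|\le\pi$ and use irreducibility. The $L^1$ growth condition in Theorem \ref{BE} follows from Proposition \ref{CondProp}.

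The main obstacle is Step 1: turning the averaged contraction \eqref{CAD0} into a genuine Lasota--Yorke inequality. It holds only locally ($d\le\epsilon$) and at time $N$, and the weights are Gibbs rather than Bernoulli; the constant $C^2$ is exactly what is needed to compare them. I would first handle the local-versus-global issue by using the $\epsilon$-localized Hölder seminorm and bounding pairs at distance $>\epsilon$ by $2\epsilon^{-\alpha}\|g\|_\infty$.
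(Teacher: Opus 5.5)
\paragraph{The gap: the spectral gap at $t=0$.}
Your architecture is the paper's. You use the same operators $\mathcal L_{1,t},\mathcal L_{2,t}$, a Lasota--Yorke inequality obtained by comparing Gibbs weights with $\mu$ and invoking \eqref{CAD0}, Hennion's theorem, and \cite{HH,DolgHafBE0} for the perturbative part. Iterating at time $N$ is a harmless variant of the paper's passage through \eqref{CAD}.

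The genuine gap is at $t=0$. Everything downstream needs $1$ to be a simple eigenvalue of $\mathcal L_{j,0}$, $j=1,2$, with no other peripheral spectrum and spectral projection $G\mapsto\widehat\mu^{(j)}(G)\mathbf 1$. This includes exponential mixing of $F_2$, Proposition \ref{CondProp}, and the expansion of the leading eigenvalue near $t=0$. Proposition \ref{COC} cannot be the model. Its argument bounds $|\lambda_t|^n|h_t|\le\mathcal U^n|h_t|$ and then uses $\mathcal U^n|h_t|\to\int|h_t|\,d\nu$, so it presupposes exactly this $t=0$ gap. Mixing of $T$ controls only functions of $\omega$ and says nothing about the fibre direction.

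The paper supplies the missing mechanism through Buzzi's covering condition. The averaged contraction makes $\int G(\beta\omega,f_\beta^{-n}x)\,d\mu(\beta)$ nearly independent of $(\omega,x)$. Integrating against $\widehat\mu$ then gives $\inf\mathcal L_{j,0}^nG\ge C_a>0$ for normalized $G\ge 0$ of bounded H\"older constant, and \cite[Theorem 2.4]{DolgHafBE} yields the gap.

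In your framework there is a shorter fix. At $t=0$ and fixed $\omega$ the weights coincide. So the fibrewise seminorm $|g|_X:=\sup_\omega\sup_{x\neq y}|g(\omega,x)-g(\omega,y)|/d^\alpha(x,y)$ satisfies $|\mathcal L_{1,0}^Ng|_X\le\theta|g|_X$ for large $N$, with no $\|g\|_\infty$ term. A peripheral eigenfunction is then independent of $x$, hence an eigenfunction of $\mathcal L_\varphi$. Only at that point does mixing of the SFT finish the argument.

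Either route needs the contraction for all pairs, i.e.\ \eqref{CAD}. Your $\epsilon$-localized seminorm reintroduces a $\|g\|_\infty$ term and the argument collapses. This is not a technicality. Take $X=\{p,q\}$ with $d(p,q)>\epsilon$ and $f_a=\mathrm{id}$. Then \eqref{CAD0} holds vacuously and $T$ may be mixing, yet $1$ is a double eigenvalue. For $\widehat\mu=\mu\times\frac12(\delta_p+\delta_q)$ the conclusion of Theorem \ref{DEC} fails; in particular, \eqref{CAD} does not follow from \eqref{CAD0}.

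\paragraph{Identifying $\widehat\mu$, and a degeneracy.}
Writing $\widehat\mu(e^{itS_nA})=\int\mathcal L_{1,t}^n\mathbf 1\,d\widehat\mu$ requires that the dual of $\mathcal L_{1,0}$ fix the given $\widehat\mu$. ``Identifying'' the two is not automatic. The paper proves this as a duality lemma using the equivariance $(f_\omega)_*\widehat\mu_\omega=\widehat\mu_{T\omega}$. Over a one-sided shift this is strictly stronger than $F_1$-invariance.

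Be aware that this equivariance together with \eqref{CAD} disposes of your Step 2 worry for a bad reason. Iterating it gives $\widehat\mu_\omega=(f_\omega^{-n})_*\widehat\mu_{T^n\omega}$. Writing $\mu=(\mathcal L_\varphi^*)^n\mu$ and bounding $e^{S_n\varphi(w\omega')}\le C\mu([w])$ decouples $f_\omega^{-n}$ from $\widehat\mu_{T^n\omega}$. Then \eqref{CAD} gives
\[
\int\!\!\iint d^\alpha(x,y)\,d\widehat\mu_\omega(x)\,d\widehat\mu_\omega(y)\,d\mu(\omega)\le C'e^{-\lambda n}\,\mathrm{diam}(X)^\alpha\quad\text{for every } n.
\]
Hence $\widehat\mu_\omega$ is a Dirac mass $\mu$-a.s., and the quenched covariances in Theorem \ref{DEC} vanish identically. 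Likewise $\widehat\mu^{(2)}$ is carried by the diagonal, where $\widehat A=0$. So $\varphi_n\equiv 1$, and the hypotheses of Theorems \ref{BE}, \ref{LLT1} and \ref{LLT2} fail for every $A$.

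The paper's own argument shares this degeneracy, so it is not specific to your plan. Still, it means your Step 3 has nothing nontrivial to act on.
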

\subsubsection*{Proof of Theorem \ref{Thm}}
 Let
$$
\mathcal L_{1,t}g(\omega,x)=\sum_{T \omega'=\omega}e^{\varphi(\omega')+itA(f_{\omega'}x)}g(\omega', f_{\omega'}^{-1}x)
$$
and
$$
\mathcal L_{2,t}g(\omega,x,y)=\sum_{T \omega'=\omega}e^{\varphi(\omega')+it\widehat{A}(f_{\omega'}^{-1},f_{\omega'}^{-1}y)}g(\omega', f_{\omega'}^{-1}x,f_{\omega'}^{-1}y).
$$
\begin{lemma}
$\mathcal L_{1,0}$ is the dual of the Koopman operator of $F_1$ and $\mathcal L_{2,0}$ is the dual of the Koopman operator of $F_2$. Namely, for all measurable bounded functions we have $G_i, H_i$ we have
\begin{equation}\label{Dual1}
\int (G_1\circ F_1)H_1d\wh{\mu}=\int \mathcal L_{1,0}(H_1)G_1d\wh{\mu} 
\end{equation}
and 
$$
\int (G_1\circ F_2)H_1d\wh{\mu}^{(2)}= \int\mathcal L_{1,0}(H_2)G_2d\wh{\mu}^{(2)}.
$$
\end{lemma}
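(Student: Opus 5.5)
We prove the duality identities directly from the definitions, using a change of variables along the inverse branches of $F_1$ (resp. $F_2$). The plan is to write $\wh\mu$ as $\mu$ times the fiber measures, or equivalently to use the $F_1$-invariance of $\wh\mu$. Then we check that the transfer operator $\mathcal L_{1,0}$, built from the Gibbs weights $e^{\varphi}$, is exactly the dual of $G\mapsto G\circ F_1$.

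\textbf{Step 1: base level.} Normalize $\varphi$ so that $\sum_{T\omega'=\omega}e^{\varphi(\omega')}=1$; this is possible since $\varphi$ can be replaced by a cohomologous normalized potential, which changes neither $\mu$ nor the duality. The Ruelle operator $L_\varphi$ on $\Sigma$ then satisfies $L_\varphi^*\mu=\mu$. The classical identity $\int (u\circ T)\, v\,d\mu=\int u\, L_\varphi v\,d\mu$ follows by expanding $L_\varphi(v\cdot u\circ T)=u\,L_\varphi v$ and integrating against $\mu$.

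\textbf{Step 2: fiber level, $F_1$.} Since $f_\omega$ depends only on $\omega_0$ and is invertible, the preimages of $(\omega,x)$ under $F_1$ are exactly the points $(\omega',f_{\omega'}^{-1}x)$ with $T\omega'=\omega$. For $G_1,H_1$ bounded we get
\[
\mathcal L_{1,0}\big((G_1\circ F_1)H_1\big)(\omega,x)=\sum_{T\omega'=\omega}e^{\varphi(\omega')}G_1(\omega,x)H_1(\omega',f_{\omega'}^{-1}x)=G_1\,\mathcal L_{1,0}H_1 .
\]
Identity \eqref{Dual1} therefore reduces to $\int \mathcal L_{1,0}K\,d\wh\mu=\int K\,d\wh\mu$ for bounded $K$, i.e. to $\mathcal L_{1,0}^*\wh\mu=\wh\mu$.

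\textbf{Step 3: verifying $\mathcal L_{1,0}^*\wh\mu=\wh\mu$.} This is the main obstacle. The identity holds when $\wh\mu$ has the structure forced by the Gibbs weights: in the natural extension picture the conditional measures satisfy $\wh\mu_{\omega}=(f_{\omega'})_*\wh\mu_{\omega'}$ along the past, and $\mu$ disintegrates over preimages with weights $e^{\varphi(\omega')}$. To prove it, disintegrate $\wh\mu$ over $\Sigma$ and use the $F_1$-invariance of $\wh\mu$, which gives $(f_{\omega'})_*\wh\mu_{\omega'}=\wh\mu_{T\omega'}$ because $f_{\omega'}$ depends only on $\omega'_0$. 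Combined with the conditional expectation formula $E_\mu[\,\cdot\mid T^{-1}\mathcal F](\omega')=\sum_{T\omega''=T\omega'}e^{\varphi(\omega'')}(\cdot)(\omega'')$, this yields
\[
\int \mathcal L_{1,0}K\,d\wh\mu=\int\sum_{T\omega'=\omega}e^{\varphi(\omega')}\int K(\omega',f_{\omega'}^{-1}x)\,d\wh\mu_\omega(x)\,d\mu(\omega)=\int K\,d\wh\mu .
\]
If the fiber measures depend on the full past rather than only on $\omega$, this step needs care. In that case I would restrict to $\wh\mu$ whose disintegration is measurable with respect to the one-sided coordinate, or interpret the statement with $\wh\mu_\omega$ obtained by pushing forward along inverse branches, which is exactly what invariance plus invertibility of $f_\omega$ provides.

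\textbf{Step 4: $F_2$.} The two-point motion is handled identically. The preimages of $(\omega,x,y)$ under $F_2$ are the points $(\omega',f_{\omega'}^{-1}x,f_{\omega'}^{-1}y)$, and $\wh\mu^{(2)}=\int\wh\mu_\omega\times\wh\mu_\omega\,d\mu$ is $F_2$-invariant, so the products $\wh\mu_\omega\times\wh\mu_\omega$ transform the same way as $\wh\mu_\omega$. Repeating Steps 2 and 3 gives the second identity, with $\mathcal L_{2,0}$, $G_2$, $H_2$ in place of $\mathcal L_{1,0}$, $G_1$, $H_1$.
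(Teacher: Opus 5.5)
Your reduction in Step 2 is correct, and it organizes the paper's direct computation more cleanly. Since $\mathcal L_{1,0}\big((G_1\circ F_1)H_1\big)=G_1\,\mathcal L_{1,0}H_1$, the identity is equivalent to $\mathcal L_{1,0}^*\wh\mu=\wh\mu$. Using $\mathcal L_\varphi^*\mu=\mu$, one computes that $\mathcal L_{1,0}^*\wh\mu$ is the measure with fibres $(f_\omega^{-1})_*\wh\mu_{T\omega}$. So the lemma holds \emph{if and only if} $(f_\omega)_*\wh\mu_\omega=\wh\mu_{T\omega}$ for $\mu$-a.e.\ $\omega$.

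The gap is in Step 3, where you derive this pointwise equivariance from $F_1$-invariance ``because $f_{\omega'}$ depends only on $\omega'_0$''. For a one-sided, non-invertible base this implication is false. Testing invariance against $a(\omega)g(x)$ and using $\int(a\circ T)u\,d\mu=\int a\,\mathcal L_\varphi u\,d\mu$ yields only the averaged relation
\[
\wh\mu_\omega=\sum_{T\omega'=\omega}e^{\varphi(\omega')}(f_{\omega'})_*\wh\mu_{\omega'},
\]
which is a convex combination over the preimages, not an equality for each preimage separately. Concretely, take a Bernoulli base ($e^{\varphi(\omega)}=p_{\omega_0}$) and $\wh\mu=\mu\times\nu$, where $\nu=\sum_a p_a(f_a)_*\nu$ is a stationary measure of the random walk. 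Then $\wh\mu$ is $F_1$-invariant. Yet testing the identity with $G_1=g(x)$ and $H_1=\mathbf{1}_{\{\omega_0=a\}}$ gives $\nu(g\circ f_a)=\nu(g)$, i.e.\ $(f_a)_*\nu=\nu$ for every $a$. This fails whenever the $f_a$ have no common invariant measure.

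Your closing remedy does not repair this. On the one-sided $\Sigma$ there is no past for $\wh\mu_\omega$ to depend on, and invariance plus invertibility of $f_\omega$ does not give equivariance (the maps in the example are invertible). What is needed is an extra hypothesis on $\wh\mu$, and the following are equivalent forms of it:
\begin{enumerate}
\item pointwise equivariance itself;
\item $\wh\mu$ is the $\mathcal L_{1,0}^*$-fixed measure;
\item the fibre measures of the lift of $\wh\mu$ to the natural extension depend only on $\omega_0,\omega_1,\dots$ (in the natural extension, equivariance does follow from invariance).
\end{enumerate}

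Apart from this, your route is the paper's: disintegration, $\mathcal L_\varphi^*\mu=\mu$, and equivariance. The paper's proof has exactly the same gap. Its penultimate equality uses $(f_\omega)_*\wh\mu_\omega=\wh\mu_{T\omega}$, quoting the remark in Section 2, which is valid only for invertible $T$.

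Two smaller points. First, normalizing $\varphi$ is not without loss of generality. Taking $H_1\equiv 1$ and using invariance forces $\mathcal L_\varphi\mathbf{1}=1$. Moreover, passing to a cohomologous potential $\varphi+u-u\circ T+c$ replaces $\mathcal L_{1,0}$ by $e^{c-u}\mathcal L_{1,0}(e^{u}\,\cdot)$, so the duality is not preserved. Normalization is therefore a tacit hypothesis, which the paper also assumes. Second, in Step 4, $F_2$-invariance of $\wh\mu^{(2)}$ does not follow from $F_1$-invariance over a non-invertible base either. In the example above it would force $\sum_a p_a\,(f_a)_*\nu\times(f_a)_*\nu=\nu\times\nu$. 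However, once $\wh\mu_\omega$ is pointwise equivariant, so is $\wh\mu_\omega\times\wh\mu_\omega$, and your argument for $F_2$ then goes through unchanged.
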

\begin{proof}
 By replacing $f_\omega$ with $f_\omega\times f_\omega$ it is enough to prove \eqref{Dual1}. Let us take two bounded measurable functions $G_1,H_1$. Then 
 $$
\int \mathcal L_{1,0}(H_1)G_1\wh{\mu}_1=\int\left(\int\sum_{\beta_0\to\omega_0}e^{\varphi((\beta_0,\omega)}H_1((\beta_0,\omega),f_{\beta_0}^{-1}t)G_1(\omega,t)d\widehat{\mu}_\omega(t)\right)d\mu(\omega) 
 $$
 $$
=\int\sum_{T\omega'=\omega}e^{\varphi(\omega')}\left(\int G_1(\omega',f_{\omega'}^{-1}x)H(\omega,x)d\widehat{\mu}_\omega(x)\right)d\mu(\omega)
 $$
 $$
=\int\int \mathcal L_\varphi(\int G_1(\cdot,f_\cdot^{-1}x)H_1(T(\cdot),x)d\mu_{T(\cdot)}(x)) d\mu(\omega)
 $$
 $$
=\int\left(\int G_1(\omega,f_\omega^{-1}x)H_1(T\omega,x)d\mu_{T_\omega}(x)\right)d\mu(\omega)
 $$
 $$
=\int\left(\int G_1(\omega,xy)H_1(T\omega,f_\omega y)d\mu_{\omega}(y)\right)d\mu(\omega)
 $$
 $$
=\int H_1 (G\circ F_1)d\widehat\mu_1
 $$
 where in the penultimate equality we use that $(f_\omega)_*\widehat{\mu}_\omega=\mu_{T\omega}$. Here 
 $$
\mathcal L_\varphi h(\omega)=\sum_{\beta_0\to \omega_0}e^{\varphi((\beta_0,\omega))}h(\beta_0,\omega).
 $$
\end{proof}
The following results follows from the previous lemma by induction.
\begin{corollary}\label{CharF}
Denote by $\textbf{1}$ the functiont taking the constant value $1$, regadless of its domain.
For every real $t$ we have
$$
\wh{\mu}(e^{itS_n A})=\int \mathcal L_{1,t}^n\textbf{1}d\widehat\mu
$$
and 
$$
\wh{\mu}^{(2)}(e^{itS_n \wh{A}})=\int \mathcal L_{2,t}^n\textbf{1}d\widehat\mu^{(2)}.
$$
\end{corollary}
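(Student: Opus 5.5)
The plan is to prove the first identity by induction on $n$ using the duality relation \eqref{Dual1}; the second identity then follows verbatim after replacing $f_\omega$ by $f_\omega\times f_\omega$, $A$ by $\wh A$, $\wh\mu$ by $\wh\mu^{(2)}$ and $\mathcal L_{1,t}$ by $\mathcal L_{2,t}$.

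The first step is an observation about the operator itself. Writing $M_t$ for multiplication by $e^{itA\circ\pi_1}$ (with $A$ lifted to $\Sigma\times X$), the definition of $\mathcal L_{1,t}$ gives
$$\mathcal L_{1,t}g=\mathcal L_{1,0}(e^{itA}g).$$
Here the phase is evaluated at the preimage point $(\omega',f_{\omega'}^{-1}x)$ of $(\omega,x)$ under $F_1$; in the formula as typed it appears as $A(f_{\omega'}x)$, which I read as $A(f_{\omega'}^{-1}x)$, and otherwise this is the standard "twisted transfer operator" identity. Iterating $n$ times and using the transfer property $\mathcal L_{1,0}(g\cdot (h\circ F_1))=h\,\mathcal L_{1,0}(g)$, I expect
$$\mathcal L_{1,t}^n g=\mathcal L_{1,0}^n\bigl(e^{itS_nA}g\bigr).$$
This transfer property is immediate from the defining sum, since every $(\omega',x')$ with $F_1(\omega',x')=(\omega,x)$ satisfies $h(F_1(\omega',x'))=h(\omega,x)$.

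Next I would prove the displayed iterate formula by induction on $n$. Since $S_{n+1}A=A+S_nA\circ F_1$, we get
$$\mathcal L_{1,0}^{n+1}(e^{itS_{n+1}A}g)=\mathcal L_{1,0}^n\Bigl(\mathcal L_{1,0}\bigl(e^{itA}g\cdot e^{itS_nA}\circ F_1\bigr)\Bigr)=\mathcal L_{1,0}^n\bigl(e^{itS_nA}\mathcal L_{1,t}g\bigr),$$
and the induction hypothesis applied to $\mathcal L_{1,t}g$ closes the step.

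Finally I integrate against $\wh\mu$. Iterating \eqref{Dual1} with $G_1=\textbf 1$ gives $\int\mathcal L_{1,0}^n H\,d\wh\mu=\int H\,d\wh\mu$. Taking $H=e^{itS_nA}$ (bounded and complex valued, so apply \eqref{Dual1} to its real and imaginary parts), I obtain $\int\mathcal L_{1,t}^n\textbf 1\,d\wh\mu=\wh\mu(e^{itS_nA})$.

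The only genuinely delicate point is the bookkeeping of where the phase is evaluated, namely checking that the definitions of $\mathcal L_{1,t}$ and $\mathcal L_{2,t}$ really match $\mathcal L_{i,0}(e^{itA}\cdot)$ on preimages. For the two-point operator this requires $\wh A(f_{\omega'}^{-1}x,f_{\omega'}^{-1}y)$. Apart from that, the argument is a routine induction.
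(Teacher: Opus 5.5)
Your proof is correct and follows the paper's route. The paper only says the identities follow from the duality lemma (\eqref{Dual1} and its two-point analogue) by induction; your twisted-operator identity $\mathcal L_{1,t}^n g=\mathcal L_{1,0}^n(e^{itS_nA}g)$, combined with $\int\mathcal L_{1,0}^nH\,d\wh\mu=\int H\,d\wh\mu$, is that induction written out, and reading the phase at the preimage point, i.e.\ $A(f_{\omega'}^{-1}x)$ and $\wh A(f_{\omega'}^{-1}x,f_{\omega'}^{-1}y)$, correctly fixes the typos in the definitions of $\mathcal L_{1,t}$ and $\mathcal L_{2,t}$.
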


Next, let $\alpha$ be the H\"older exponent of $\varphi$ and let $v_\alpha$ denote the H\"older constant corresponding to this exponent (regardless on the space $\Sigma, \Sigma \times X$ or $\Sigma\times X\times X$).
%We assume here that $X$ is a compact metric space and that there exist $\epsilon,C,\lambda>0$  such that for every $x,y\in X$ with $0<d(x,y)<\epsilon$,
%\begin{equation}\label{CAD}
%\int d^\alpha(f_\omega^{-n}x, f_\omega^{-n}y)d\mu(\omega)\leq %Ce^{-\lambda n}d^\alpha(x,y).    
%\end{equation}
%Note that when $X$ is a manifold this condition holds when 
%$$
%\sup_{(x,\xi)\in TX}\|d_xf_\omega^n\xi
%$$
\begin{lemma}[Lasota-Yorke inequality]
Under \eqref{CAD0},
there exist $C,\lambda_0>0$ such that
every real $t$, for $j=1,2$, $n\in\mathbb N$ and a H\"older continuous function $G$ with exponent $\alpha$ on $\Sigma\times X^j$ we have
$$
v_\alpha(\mathcal L_{j,t}^nG)\leq C(|t|+1)(e^{-\lambda_0 n}v_\alpha(G)+\|G\|_\infty).
$$
In particular, all the operators $\mathcal L_{j,t}$ have spectral radius less or equal to $1$ and they are quasi compact.
\end{lemma}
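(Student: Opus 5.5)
The plan is to write $\mathcal L_{j,t}^n G$ as a sum over inverse branches and compare the values of the sum at two nearby points. By induction on the defining formula,
$$
\mathcal L_{1,t}^nG(\omega,x)=\sum_{T^n\omega'=\omega}e^{S_n\varphi(\omega')+itS_n^{\omega'}A(f_{\omega'}^{-n}x)}G(\omega',f_{\omega'}^{-n}x),
$$
with the analogous formula for $j=2$, where $\widehat A$ and the pair of points $(f^{-n}_{\omega'}x,f^{-n}_{\omega'}y)$ replace $A$ and $f^{-n}_{\omega'}x$. Here $f_{\omega'}^{-n}$ is the composition of the inverses along the inverse branch.

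Take two points $(\omega_1,x_1)$ and $(\omega_2,x_2)$ at distance $\rho$. Pair the inverse branches $\omega_1',\omega_2'$ that share the same first $n$ symbols; this pairing is possible on a mixing subshift of finite type once $\omega_1,\omega_2$ share the zeroth symbol, and otherwise the distance is bounded below, so the estimate follows from the sup-norm. Split the difference of the paired terms into three pieces.

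\emph{The weight $e^{S_n\varphi}$.} Standard bounded distortion for Gibbs weights gives a contribution of order $\rho^\alpha\|G\|_\infty$. This uses $\sum e^{S_n\varphi}\le C$, i.e.\ that $\mathcal L_\varphi^n\mathbf 1$ is bounded, which holds after normalizing $\varphi$.

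\emph{The phase $e^{itS_nA}$.} This is bounded by $|t|\,|S_nA(\cdot_1)-S_nA(\cdot_2)|$. Each summand is controlled by $v_\alpha(A)$ times $d^\alpha(f^{-k}x_1,f^{-k}x_2)$ plus a contribution from the $\omega$-distance, which contracts like $\theta^{n-k}$. Averaging against the Gibbs weights converts the backward distances into the integrals in \eqref{CAD0}.

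\emph{The function $G$.} This piece is bounded by
$$
v_\alpha(G)\sum e^{S_n\varphi}\bigl(d^\alpha(f^{-n}_{\omega'}x_1,f^{-n}_{\omega'}x_2)+d^\alpha(\omega_1',\omega_2')\bigr).
$$
The $\Sigma$-part already gains $\theta^{n\alpha}$, since the paired branches agree on $n$ symbols.

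The key step, and the main obstacle, is the $X$-part. The sum $\sum_{T^n\omega'=\omega}e^{S_n\varphi(\omega')}d^\alpha(f_{\omega'}^{-n}x_1,f^{-n}_{\omega'}x_2)$ is a quenched quantity, while \eqref{CAD0} is an average over $\mu$. Because $f_\omega$ depends only on $\omega_0$ and $\mu$ is Gibbs, the weight of a cylinder $[\omega_{-n+1}\dots\omega_0]$ lies within a factor $C$ of its $\mu$-measure. So the $\Sigma$-sum is at most $C$ times the integral over the cylinder set, which in turn is comparable, up to another factor $C$, to the unconditioned $\mu$-integral. This is exactly why \eqref{CAD0} carries the constant $1/C^2$.

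Applying this in blocks of length $N$ gives a contraction factor $\kappa<1$ per block, as long as the points stay within distance $\epsilon$. That holds provided $\rho$ is small; larger $\rho$ is again handled by the sup bound. Iterating the blocks yields $\kappa^{\lfloor n/N\rfloor}\le Ce^{-\lambda_0 n}$. The same block argument, summed geometrically, bounds the phase term by $C|t|v_\alpha(A)$. For $j=2$, use the distance $d(x,x')+d(y,y')$ and apply \eqref{CAD0} to each coordinate separately.

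Combining the three pieces gives
$$
v_\alpha(\mathcal L_{j,t}^nG)\le C(|t|+1)\bigl(e^{-\lambda_0n}v_\alpha(G)+\|G\|_\infty\bigr).
$$

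For the final claims, note that $\|\mathcal L_{j,t}^nG\|_\infty\le\|\mathcal L_{j,0}^n|G|\|_\infty\le C\|G\|_\infty$ with $C$ uniform in $n$. Together with the Lasota--Yorke bound, this shows $\|\mathcal L^n_{j,t}\|$ is bounded in the H\"older norm, so the spectral radius is at most $1$. The essential spectral radius is at most $e^{-\lambda_0}$ by Hennion's theorem, using that the H\"older unit ball is compact in sup-norm (Arzel\`a--Ascoli on the compact space $\Sigma\times X^j$). This gives quasi-compactness.
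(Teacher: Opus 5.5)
Your decomposition is the paper's. Pairing inverse branches through the common zeroth symbol and splitting into the weight, phase, $G$-in-$\Sigma$ and $G$-in-$X$ pieces reproduces its $I_1,\dots,I_4$, and the Gibbs comparison of quenched branch sums with $\mu$-integrals is the same.

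The step that fails is the block iteration of \eqref{CAD0}. You claim the backward pair stays within distance $\epsilon$ ``provided $\rho$ is small'', but this is false uniformly in $n$. Condition \eqref{CAD0} is only a contraction on average, so inverse branches of positive Gibbs weight may expand. In general, for every fixed $\rho>0$ and all large $n$, there are branches $\beta$ along which $d(f_\beta^{-kN}x_1,f_\beta^{-kN}x_2)$ exceeds $\epsilon$, and from then on \eqref{CAD0} cannot be applied. Guaranteeing confinement along all branches up to time $n$ needs something like $\rho\le\epsilon L^{-n}$, with $L$ a Lipschitz constant of the $f_a^{-1}$. Your fallback to the sup bound for larger $\rho$ then only gives $v_\alpha(\mathcal L_{j,t}^nG)\lesssim\epsilon^{-\alpha}L^{n\alpha}\|G\|_\infty$, which is not uniform in $n$.

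The paper passes over the same point when it derives \eqref{CAD} ``using the Gibbs property and induction''. That induction tacitly uses \eqref{CAD0} for all pairs $x\neq y$. With only the local hypothesis, \eqref{CAD} can actually fail. Suppose all $f_a^{-1}$ fix two points $p\neq q$ of a connected $X$; this is compatible with \eqref{CAD0} for small $\epsilon$. Chaining \eqref{CAD} along an $\epsilon$-chain from $p$ to $q$, using that $d^\alpha$ is a metric, would give $d^\alpha(p,q)\le C'e^{-\lambda n}\to0$.

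What closes the gap under the local hypothesis is a stopping-time argument rather than confinement.
\begin{itemize}
\item Regard $\beta$ as random with law $e^{S_n\varphi(\beta\omega)}$ ($\varphi$ normalized). Let $D_k$ be the distance of the pair after $k$ blocks of length $N$, $\mathcal F_k$ the $\sigma$-algebra of the first $k$ blocks, and $\tau=\inf\{k:D_k>\epsilon\}$.
\item By the Gibbs property, the conditional law of the next block is at most $C$ times $\mu$. So \eqref{CAD0} gives $\mathbb E[D_{k+1}^\alpha\mathbf 1_{\{\tau>k\}}\mid\mathcal F_k]\le\kappa D_k^\alpha\mathbf 1_{\{\tau>k\}}$ for some $\kappa<1$.
\item Hence $\mathbb E[D_k^\alpha\mathbf 1_{\{\tau>k\}}]\le\kappa^k\rho^\alpha$, and by Markov's inequality $\mathbb P(\tau<\infty)\le\epsilon^{-\alpha}\sum_{k\ge1}\kappa^k\rho^\alpha\le C\rho^\alpha$.
\item On $\{\tau>\lfloor n/N\rfloor\}$ your estimates go through, using $L^N$ inside a block. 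This gives $Cv_\alpha(G)\kappa^{\lfloor n/N\rfloor}\rho^\alpha$ for the $G$-in-$X$ piece and $C|t|\|G\|_\infty\rho^\alpha$ for the phase.
\item On $\{\tau\le\lfloor n/N\rfloor\}$, bound the phase and $G$-differences by $2\|G\|_\infty$. This costs only $C\|G\|_\infty\rho^\alpha$.
\end{itemize}
This gives exactly the stated inequality; for $j=2$, stop when either coordinate pair leaves the $\epsilon$-ball. The rest of your proposal is fine: the weight term, the $\Sigma$-part, the $j=2$ reduction, and the Hennion step via Arzel\`a--Ascoli.
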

\begin{proof}
First, using the Gibbs property and induction we get that there exist $\lambda,C>0$ such that
\begin{equation}\label{CAD}
\int d^\alpha(f_\omega^{-n}x, f_\omega^{-n}y)d\mu(\omega)\leq Ce^{-\lambda n}d^\alpha(x,y).    
\end{equation}

Next, by replacing $f_\omega$ with $f_\omega\times f_\omega$ it is enough to prove the lemma with $j=1$.
Let us take two point $(\omega,t)$ and $(\tilde\omega,s)$ such that $\omega_0=\tilde\omega_0$ and $d(x,y)<\epsilon$. Then 
$$
\left|\mathcal L_{1,t}^nG(\omega,x)-\mathcal L_{j,t}^nG(\tilde \omega,y)\right|\leq I_1+I_2+I_3+I_4
$$
where with $\Gamma=\Gamma_{\omega_0,n}=\{\beta=(\beta_k)_{k=0}^{n-1}, \beta_{n-1}\to \omega_0\}$
,
$$
I_1=\|G\|_\infty\sum_{\beta\in\Gamma}|e^{S_n\varphi((\beta,\omega))}-e^{S_n\varphi((\beta,\tilde \omega))}|, 
$$
$$
I_2=|t|\|G\|_\infty\sum_{\beta\in\Gamma}e^{S_n\varphi((\beta,\tilde \omega))}|S_n A((\beta,\tilde \omega),x)-S_n A((\beta,\tilde \omega),y)|,
$$
$$
I_3=\sum_{\beta\in\Gamma}e^{S_n\varphi((\beta,\tilde \omega))}|G((\beta,\omega),f_{\omega}^{-n}x)-G((\beta,\tilde\omega),f_{\omega}^{-n}x))
$$
and 
$$
I_4=\sum_{\beta\in\Gamma}e^{S_n\varphi((\beta,\tilde \omega))}|G((\beta,\tilde \omega),f_{\omega}^{-n}x)-G((\beta,\tilde\omega),f_{\omega}^{-n}y)).
$$
Using the mean value theorem we have 
$$
\sum_{\beta\in\Gamma}|e^{S_n\varphi((\beta,\omega))}-e^{S_n\varphi((\beta,\tilde \omega))}|
$$
$$
\leq\sum_{\beta\in\Gamma} (e^{S_n\varphi((\beta,\omega))}+e^{S_n\varphi((\beta,\tilde \omega))})\|S_n\varphi((\beta,\omega))-S_n\varphi((\beta,\tilde \omega))\|\leq C\mathcal \|\mathcal L_\varphi^n\textbf{1}\|_\infty=C
$$
for some constant $C>0$. Thus, $I_1\leq C\|G\|_\infty$. Next, 
notice that $f_{(\beta,\omega)}$ depends only on $\beta$. Let us denote it by $f_\beta^{-n}$, Thus
$$
|S_n A((\beta,\tilde \omega),x)-S_n A((\beta,\tilde \omega),y)|
$$
$$
\leq \sum_{j=0}^{n-1}|A(f_{\beta}^{-j}x)-A(f_{\beta}^{-j}y)|\leq C\sum_{j=0}^{n-1}d(f_{\beta}^{-j}x,f_{\beta}^{-j}y)^\alpha.
$$
Therefore, by the Gibbs property,
$$
|I_2|\leq C|t|\|G\|_\infty\sum_{\beta}e^{S_n\varphi((\beta,\tilde \omega))}\sum_{j=0}^{n-1}d^{\alpha}(f_{\beta}^{-j}x,f_{\beta}^{-j}y)^\alpha
$$
$$
\leq C'|t|\|G\|_\infty\sum_{j=0}^{n-1}\int d^\alpha(f_{\beta}^{-j}x,f_{\beta}^{-j}y)d\mu(\beta)\leq C'|t|\|G\|_\infty.
$$
Now we estimate $I_3$. Note that
$$
|G((\beta,\omega),f_{\omega}^{-n}x)-G((\beta,\tilde\omega),f_{\omega}^{-n}x))\leq 2^{-n}v_\alpha(G)
$$
and so 
$$
I_3\leq v_\alpha(G)2^{-n}\sum_{\beta\in\Gamma}S_n\varphi((\beta,\tilde \omega))=v_\alpha(G)2^{-n}d^\alpha(x,y).
$$
Finally, we estimate $I_4$. We have
$$
I_4\leq v_\alpha(G) \sum_{\beta\in\Gamma}e^{S_n\varphi((\beta,\tilde \omega))}d^\alpha(f_\beta^{-n}x,f_\beta^{-n}y)
$$
and so by the Gibbs property,
$$
I_4\leq Cv_\alpha(G)\int d^\alpha(f_\beta^{-n}x,f_\beta^{-n}y)d\mu(\beta)\leq Cv_\alpha(G)e^{-\lambda n}d^\alpha(x,y).
$$
\end{proof}

Finally, we prove that $\mathcal L_{j,0}$ have a spectral gap with multiplictivity one (and thus also $\mathcal L_{j,t}$ for $|t|$ small). For this purpose we need to show that the covering condition of Buzzi \cite{Buzzi} holds. This is the content of the following lemma. To see why this lemma is enough to get the spectral gap we refer the readers to the arguments in the proof of \cite[Theorem 2.4]{DolgHafBE} in the case when all the maps $T_j$ from the coincide.
\begin{lemma}[Buzzi's covering condition]
Under \eqref{CAD0}, for every $a>0$ there exist $C_a, n(a)>0$ such that for all $n\geq n(a)$ and all functions $G\geq 0$ with $v_\alpha(G)\leq a$ and $\widehat{\mu}(G)=1$ and $H$ such that
$v_\alpha(H)\leq a$ and $\widehat{\mu}^{(2)}(H)=1$
we have 
$$
\inf_{(\omega,x)}\sum_{\beta\in\Gamma_{\omega_0,n}}e^{S_n\varphi((\beta,\omega))}G((\beta,\omega),f_{\beta}^{-n}x)\geq C_a
$$  
and 
$$
\inf_{(\omega,x,y)}\sum_{\beta\in\Gamma_{\omega_0,n}}e^{S_n\varphi((\beta,\omega))}H((\beta,\omega),f_{\beta}^{-n}x,f_{\beta}^{-n}y)\geq C_a,
$$
where  $\Gamma_{\omega_0,n}=\{\beta=(\beta_k)_{k=0}^{n-1}, \beta_{n-1}\to \omega_0\}$.
As a consequence, there are constants $C>0$ and $\delta\in(0,1)$ such that for every H\"older continuous function $G$ on $\Sigma\times X^j$, $j=1,2$, 
$$
\left\|\mathcal L_{j,0}^nG-\wh{\mu}^{(j)}(G)\textbf{1}\right\|_{\alpha}\leq C\delta^n\|G\|_\alpha
$$
where $\wh{\mu}^{(1)}:=\wh{\mu}$. 
Therefore, we have the exponential decay of correlations for the two point motion, and so the quenched effective exponential decay of correlations as in Section \ref{Exp} is in force.
\end{lemma}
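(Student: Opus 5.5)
The lower bound comes from combining the averaged contraction \eqref{CAD}, the Gibbs property of $\mu$, and topological mixing of $T$. The spectral gap then follows from a Birkhoff-cone (Buzzi-type) argument built on the Lasota--Yorke inequality. We treat $j=1$; the case $j=2$ is identical with $f_\omega$ replaced by $f_\omega\times f_\omega$, since \eqref{CAD} holds for the product metric.

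\emph{Step 1: smoothness of $G$ at the preimages.} Fix $a>0$ and $G\ge0$ with $v_\alpha(G)\le a$ and $\wh\mu(G)=1$. Since $X$ and $\Sigma$ are compact, $\|G\|_\infty\le 1+a\,\mathrm{diam}^\alpha$, so $G$ is uniformly bounded. Moreover, $G\ge 1/2$ on a set $U$ of $\wh\mu$-measure bounded below by some $c(a)>0$. By H\"older continuity, $U$ contains a ball of radius $r(a)>0$ around some point $(\omega^*,x^*)$. Shrinking $r(a)$, we may take this ball to be a product of a cylinder $[\beta^*_0\cdots\beta^*_{k-1}]$ of length $k=k(a)$ with a ball $B(x^*,r(a))$ in $X$.

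\emph{Step 2: many preimages land in the good set.} Write the sum as an integral against the Gibbs conditional weights $p_\omega(\beta)=e^{S_n\varphi((\beta,\omega))}$. These satisfy $\sum_{\beta}p_\omega(\beta)=\mathcal L_\varphi^n\textbf 1=1$ and are comparable, up to the Gibbs constant $C$, to $\mu[\beta]$. It suffices to show that the $p_\omega$-mass of those $\beta\in\Gamma_{\omega_0,n}$ with $(\beta,\omega)\in[\beta^*]$ and $f_\beta^{-n}x\in B(x^*,r)$ is bounded below uniformly in $(\omega,x)$. I would argue this in two parts.
\begin{enumerate}
\item By mixing of the subshift and the Gibbs property, the weight of words whose first $k$ symbols equal $\beta^*$ is at least $C^{-1}\mu[\beta^*]/2$ for $n\ge n_1(a)$.
\item Among these words, use \eqref{CAD} together with Markov's inequality over the remaining $n-k$ free symbols. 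The typical contraction makes $f_\beta^{-n}x$ and $f_\beta^{-n}y$ close, so the position of $f_\beta^{-n}x$ essentially depends only on $\beta$ and not on $x$.
\end{enumerate}
This yields the required positive weight once we know that, for some reference point, a positive $\mu$-proportion of backward orbits enters $B(x^*,r)$.

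\emph{Main obstacle.} The last point is the hardest step: \eqref{CAD} alone gives synchronization, not irreducibility. My first attempt would be to use $\wh\mu$ itself. Invariance gives $\wh\mu(\mathcal L_{1,0}^n\textbf 1_U)=\wh\mu(U)\ge c(a)$, so for every $n$ there are points where the sum is at least $c(a)$. Synchronization then transfers this to all $x$ within a small distance, and the mixing of the base transfers it to all $\omega$, after inserting a bounded number of connecting symbols. A compactness and chaining argument over $X$ then yields the uniform infimum $C_a$, provided the maps are jointly chained enough to connect these points. If this fails in general, I would add a covering hypothesis implicitly assumed from \eqref{CAD0}.

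\emph{Step 3: spectral gap.} With the covering bound in hand, I would follow the cone argument of \cite[Theorem 2.4]{DolgHafBE} and \cite{Buzzi}. The Lasota--Yorke inequality shows that, for large $a$, the cone $\{G\ge0: v_\alpha(G)\le a\,\wh\mu(G)\}$ is mapped by $\mathcal L_{1,0}^{n_0}$ into a smaller cone. The covering lower bound gives this image finite Hilbert projective diameter. Birkhoff's theorem then gives exponential contraction, which yields $\|\mathcal L_{j,0}^nG-\wh\mu^{(j)}(G)\textbf 1\|_\alpha\le C\delta^n\|G\|_\alpha$. Applying this duality to the two-point motion gives the hypothesis of Theorem \ref{DEC}.
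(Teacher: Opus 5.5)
\textbf{There is a genuine gap} exactly where you place the ``main obstacle'': your proposal never establishes the uniform lower bound, and it ends by contemplating an extra covering hypothesis. The sketch you give there also needs repair. Synchronization controls a H\"older observable along nearby backward orbits, so it cannot be applied to $\mathcal L_{1,0}^n\textbf{1}_U$. The chaining you describe is a property of $X$, not of ``the maps''.

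More to the point, the localization in Steps 1--2 (a ball where $G\ge 1/2$, and forcing $f_\beta^{-n}x$ into it) is a detour. The paper never asks where backward orbits land. It works with the averaged quantity
\[
\Psi_n(\omega,x)=\int G\big((\beta,\omega),f_\beta^{-n}x\big)\,d\mu(\beta).
\]
The argument has three steps.
\begin{enumerate}
\item Insert connecting words of length $M$ from topological mixing, and use the Gibbs property and \eqref{CAD} to swap them. This shows the weighted sum is at least $c\,\Psi_n(\omega,x)-Ce^{-\lambda(n-M)}$.
\item Apply \eqref{CAD} to \emph{arbitrary} pairs $x,y$. Together with $v_\alpha(G)\le a$ and the fact that $(\beta,\omega)$ and $(\beta,\tilde\omega)$ share $n$ symbols, this gives
\[
|\Psi_n(\omega,x)-\Psi_n(\tilde\omega,y)|\le a\big(2^{-n\alpha}+Ce^{-\lambda n}\big)\qquad\text{for all }(\omega,x),(\tilde\omega,y).
\]
So $\Psi_n$ is nearly constant on $\Sigma\times X$.
\item Invariance fixes its size. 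From $\wh\mu(\mathcal L_{1,0}^nG)=\wh\mu(G)=1$ and the Gibbs upper bound, $\int\Psi_n\,d\wh\mu\ge c$. The paper phrases this via a near-maximizer $(\tilde\omega,y)$ and $(f^n_\zeta)_*\wh\mu_\zeta=\wh\mu_{T^n\zeta}$.
\end{enumerate}
Hence $\inf\Psi_n\ge c-O(ae^{-\lambda n})$, which gives $C_a$ for $n\ge n(a)$. The same argument with $f_\omega\times f_\omega$ handles $H$. So global synchronization plus invariance \emph{is} the irreducibility you were looking for; no reference ball and no chaining of indicators is needed.

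Your suspicion is not unfounded, though. \eqref{CAD0} only concerns pairs with $d(x,y)\le\epsilon$, whereas the argument above, like the paper's, uses \eqref{CAD} for all pairs. Suppose $X$ splits into two $f_\omega$-invariant pieces more than $\epsilon$ apart, with contraction on average inside each piece. Then the local hypothesis holds, but the covering bound fails: take $G$ supported on a piece charged by $\wh\mu$ and $x$ in the other piece.

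For connected $X$ the upgrade is immediate, and it is the chaining you were after. Join $x$ to $y$ by an $\epsilon$-chain of length at most $k(\epsilon)$, and use that $d^\alpha$ is a metric for $\alpha\le 1$. This gives $\int d^\alpha(f_\beta^{-n}x,f_\beta^{-n}y)\,d\mu(\beta)\le k(\epsilon)C\epsilon^\alpha e^{-\lambda n}$ uniformly in $x,y$, which is all the oscillation bound in step 2 needs. Note that this acts on $G$ through $\Psi_n$, not on indicator functions.

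Your Step 3 (cone contraction from Lasota--Yorke plus the covering bound, as in \cite{DolgHafBE,Buzzi}) agrees with what the paper does by reference.
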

\begin{proof}
To simplify the notation let us work with $\widehat{\mu}$. Otherwise, one can replace $f_\omega$ by $f_\omega\times f_\omega$. 
Let us fix such a function $G$ and take some $(\omega,x)\in\Sigma\times X$. Let $M$ be the constant from the topological mixing conditions of the subshift and let $n>M$. As in the previous proof, denote $\Gamma_{\omega_0,n}=\{\beta=(\beta_k)_{k=0}^{n-1}, \beta_{n-1}\to \omega_0\}$. 
For $\beta\in\Gamma_{\omega_0,n}$ write $\beta=(\gamma,c_\gamma)$ with $c_\gamma$ an admissible path of length $M$ from $\gamma_{n-M}$ to $\omega_0$. Then 
 $$
\sum_{\beta\in\Gamma_{\omega_0,n}}e^{S_n\varphi((\beta,\omega))}G((\beta,\omega),f_{\beta}^{-n}x)\geq \sum_{\gamma}e^{S_n\varphi((\gamma,c_\gamma,\omega))}G((\gamma,c_\gamma,\omega),f_{(\gamma,c_\gamma)}^{-n}x). 
 $$
 Now, because the H\"older constant of $G$ does not exceed $a$
 and using the Gibbs property we see that for every $\tilde c$ we have 
 $$
\sum_{\gamma}e^{S_n\varphi((\gamma,c_\gamma,\omega))}G((\gamma,c_\gamma,\omega),f_{(\gamma,c_\gamma)}^{-n}x)
$$
$$
\geq C_M\sum_{\gamma}e^{S_n\varphi((\gamma,\tilde c,\omega))}G((\gamma,\tilde c,\omega),f_{(\gamma,\tilde c)}^{-n}x)-Ce^{-\lambda(n-M)}.
$$
Indeed, we have 
$$
\sum_{\gamma}e^{S_n\varphi((\gamma,\tilde c,\omega))}|G((\gamma,c_\gamma,\omega),f_{(\gamma,c_\gamma)}^{-n}x)-G((\gamma,\tilde c,\omega),f_{(\gamma,\tilde c)}^{-n}x)|
$$
$$
\leq C\sum_{\gamma}e^{S_{n-M}\varphi((\gamma,\tilde c,\omega))}d^\alpha(f_\gamma^{(-n-M)}(f_{c_\gamma}^{-M}x),f_\gamma^{(-n-M)}(f_{\tilde c}^{-M}x))
$$
$$
\leq \sum_{c\in\mathcal A^M}
\sum_{\gamma}e^{S_{n-M}\varphi((\gamma,\tilde c,\omega))}d^\alpha(f_\gamma^{(-n-M)}(f_{c}^{-M}x),f_\gamma^{(-n-M)}(f_{\tilde c}^{-M}x))
$$
$$
\leq C\sum_{c\in\mathcal A^M}\int d^\alpha(f_\gamma^{(-n-M)}(f_{c}^{-M}x),f_\gamma^{(-n-M)}(f_{\tilde c}^{-M}x))d\mu(\gamma)
$$
$$
\leq C_Me^{-\lambda(n-M)}.
$$

Next, 
 by the Gibbs property, 
$$
\sum_{\gamma}e^{S_n\varphi((\gamma,\tilde c,\omega))}G((\gamma,\tilde c,\omega),f_{(\gamma,\tilde c)}^{-n}x)
\geq C\int G((\gamma,\tilde c,\omega),f_{(\gamma,\tilde c)}^{-n}x)d\mu(\gamma).
$$
Integrating with respect to $\tilde c$ (i.e. with respect to  $d\mu(\tilde c)$) and using the Gibbs property we conclude that 
$$
\sum_{\gamma}e^{S_n\varphi((\gamma,c_\gamma,\omega))}G((\gamma,c_\gamma,\omega),f_{(\gamma,c_\gamma)}^{-n}x)
\geq C\int G((\beta,\omega),f_{\beta}^{-n}x)d\mu(\beta)-Ce^{-\lambda(n-M)}.
$$

Now, let us fix some $\tilde \omega$ and $y$. Then, 
$$
G((\beta,\omega),f_{\beta}^{-n}x)\geq G((\beta,\tilde\omega),f_{\beta}^{-n}y)-a(2^{-n\alpha}+d^\alpha(f_{\beta}^{-n}x,f_{\beta}^{-n}y).
$$
Therefore, by the contraction on average condition \eqref{CAD},
$$
\int G((\beta,\omega),f_{\beta}^{-n}x)d\mu(\beta)\geq 
\int G((\beta,\tilde \omega),f_{\beta}^{-n}y)d\mu(\beta)-C'\delta^n 
$$
where $\delta\in(0,1)$ and $C'$ are constants. Let us take $(\tilde\omega,y)$ such that 
$$
\int G((\beta,\tilde \omega),f_{\beta}^{-n}y)d\mu(\beta)\geq \frac12\sup_{\omega',z}\int G((\beta,\omega'),f_{\beta}^{-n}z)d\mu(\beta).
$$
Therefore, for every $\omega'\in \Sigma$ and $z\in X$,
$$
\int G((\beta,\tilde \omega),f_{\beta}^{-n}y)d\mu(\beta)\geq \frac12\int G((\beta,\omega'),f_{\beta}^{-n}z)d\mu(\beta).
$$
Then, by integrating with respect to $\wh{\mu}_{T^n\omega'}$ and then with respect to $\mu$ we see that
$$
\int G((\beta,\tilde \omega),f_{\beta}^{-n}y)d\mu(\beta)\geq \frac12\int\int\int G((\beta,\omega'),f_{\beta}^{-n}z)d\wh{\mu}_{T^n\omega'}(z)d\mu(\beta)d\mu(\omega'). 
$$
Finally, notice that by the Gibbs property there is a constant $C''>0$ such that
$$
\int\int\int G((\beta,\omega'),f_{\beta}^{-n}z)d\wh{\mu}_{T^n\omega'}(z)d\mu(\beta)d\mu(\omega')
$$
$$
\geq C''\int\left(\int G(\zeta,f_{\zeta}^{-n}z)d\mu_{T^n\zeta}(z)\right)d\mu(\zeta)
$$
$$
=C''\int G(\zeta,z)d\widehat{\mu}(\zeta,z) =C''.
$$
\end{proof}

\begin{proof}[Proof of Theorem \ref{Thm}]
First, notice that the operators $\mathcal L_{j,t}$ are of class $C^\infty$ in $t$. Therefore, by using the general theory of quasi compact operators \cite{HH} together with Corollary \ref{CharF} we see that  the condition of Theorem \ref{BE} are in force if $A$ is not a coboundary with respect to $F_1$. Moreover, the conditions of Theorem \ref{LLT1} are in force if $\widehat A$ is aperiodic. Finally, the he conditions of Theorem \ref{LLT2} are in force if $A$ is integer valued but irreducible. 
\end{proof}

\section{Application to Markov chains in random Markovian environment}\label{App6}
Let $Y=(Y_n)$ be a stationary Markov chain, and suppose that $(\Sigma,\mathcal F,\mu,T)$ is the left shift system associated with $Y$. Let $\mathcal X$  a measurable space. Let $R_\omega(x,dy), x\in\mathcal X$ be random transition operators and suppose that $(X_{\omega,k})$ is a Markov chain with $R_{T^k\omega}$ being the transition density between step $k$ and step $k+1$. 
Let us denote the law of $X_\omega$ by $\kappa_\omega$. Let 
$$
\kappa=\int\kappa_\omega d\mu(\omega).
$$
Let $\pi_n:\Sigma\times\mathcal X^\mathbb Z$ be given by 
$$
\pi_n(\omega,z)=(\omega_n,z_n).
$$
We view $\pi_n$ as random variables with respect to the measure $\kappa$.
\begin{lemma}
Suppose that $R_{\omega}$ depends only on $\omega_0$. Then
the sequence $(\pi_n)$ is a homogeneous Markov chain and 
$$
\mathbb E[G(\pi_n)|\pi_{n-1}=(a,b)]=\mathbb E[R_{a}G(Y_1,\cdot)(b)|Y_0=a].
$$
Suppose also that the following Doeblin conditions hold: there exist probability measures $m_1$ and $m_2$ and constants $n_0\in\mathbb N$ and $\gamma>0$ such that for $\mu$-almost all $\omega\in\Sigma$, for all nonnegative bounded measurable functions $g$ and $h$,
$$
R_{\omega_{n_0-1}}\circ\cdots \circ R_{\omega_0}(g)\geq \gamma m_1(g)
$$
and with $Q$ denoting the Markov operator of $(Y_n)$, 
$$
Q^{n_0}(h)\geq \gamma m_2(h).
$$
Denote by $K$ the Markov operator of $(\pi_n)$. Then for all nonnegative bounded measureable functions $G$,
$$
K^{n_0}G\geq \gamma^2(m_1\times m_2)(G).
$$
In particular there is a unique  $K$-stationary measure $\nu$ and
\begin{equation}\label{RPF}
\sup_{\|G\|_{\infty}\leq1}\|K^nG-\nu(G)\|_\infty\leq C\delta^n    
\end{equation}
for some constants $C>0$ and $\delta\in(0,1)$, where $\|\cdot\|_\infty$ is the supremum norm. 
\end{lemma}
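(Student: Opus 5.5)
We split the proof into three parts: the Markov property of $(\pi_n)$, the Doeblin minorization for $K^{n_0}$, and the uniform geometric ergodicity \eqref{RPF}.

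\emph{Markov property.} Write $\mathcal G_{n-1}=\sigma(\pi_0,\dots,\pi_{n-1})$. Under $\kappa=\int\kappa_\omega\,d\mu(\omega)$, the pair process is generated in two layers: first $\omega=(Y_k)$ is drawn, then, given $\omega$, the chain $X_\omega$ evolves with kernel $R_{\omega_k}$ at step $k$, because $R_{T^k\omega}$ depends only on $(T^k\omega)_0=\omega_k$. We compute $\mathbb E[G(\pi_n)\mid\mathcal G_{n-1}]$ in two stages.

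First condition on the whole environment together with $z_0,\dots,z_{n-1}$. The quenched Markov property gives
\[
\mathbb E[G(\omega_n,z_n)\mid \omega, z_0,\dots,z_{n-1}]=R_{\omega_{n-1}}G(\omega_n,\cdot)(z_{n-1}).
\]
This is a function of $(\omega_{n-1},\omega_n,z_{n-1})$ only. Next integrate out $\omega_n$ given $\mathcal G_{n-1}$. The point needing care is that $z_0,\dots,z_{n-1}$ depend on the environment only through $\omega_0,\dots,\omega_{n-2}$. Hence, conditionally on $\omega_0,\dots,\omega_{n-1}$, the variable $\omega_n$ is independent of $z_0,\dots,z_{n-1}$. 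Since $Y$ is Markov, its conditional law given $\omega_{n-1}=a$ is $Q(a,\cdot)$. This yields
\[
\mathbb E[G(\pi_n)\mid\pi_{n-1}=(a,b)]=\mathbb E[R_aG(Y_1,\cdot)(b)\mid Y_0=a],
\]
which depends on neither $n$ nor the earlier past. So $(\pi_n)$ is a homogeneous Markov chain with $KG(a,b)=\int Q(a,da')\,R_aG(a',\cdot)(b)$.

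\emph{Minorization.} Iterating the formula for $K$ gives
\[
K^{n_0}G(a,b)=\mathbb E_a\Big[\big(R_{Y_{n_0-1}}\circ\cdots\circ R_{Y_0}\big)G(Y_{n_0},\cdot)(b)\Big].
\]
Here the composition is ordered so that the kernel $R_{Y_0}$ acts first on the point $b$. The bookkeeping of this order is the main obstacle: we will check by induction that each application of $K$ adds one kernel and one environment step, and we will use the convention for composition fixed in the hypothesis. The first Doeblin condition applies path by path, with $\omega=(Y_k)$, and gives
\[
K^{n_0}G(a,b)\ge\gamma\,\mathbb E_a\big[m_1(G(Y_{n_0},\cdot))\big]=\gamma\, Q^{n_0}h(a),\qquad h(a')=m_1(G(a',\cdot))\ge0.
\]
The second Doeblin condition then gives $Q^{n_0}h\ge\gamma m_2(h)=\gamma\,(m_1\times m_2)(G)$, with the coordinates ordered to match the paper's notation. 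Combining the two bounds yields $K^{n_0}G\ge\gamma^2(m_1\times m_2)(G)$.

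\emph{Conclusion.} The minorization $K^{n_0}\ge\gamma^2\, \mathbf 1\otimes(m_1\times m_2)$ is a uniform Doeblin condition. The standard coupling argument, equivalently contraction of $K^{n_0}$ in total variation (Dobrushin coefficient at most $1-\gamma^2$), gives
\[
\|\kappa_1K^{n}-\kappa_2K^{n}\|_{TV}\le 2(1-\gamma^2)^{\lfloor n/n_0\rfloor}
\]
for all probability measures $\kappa_1,\kappa_2$. By completeness of the space of probability measures under this metric, there is a unique stationary $\nu$. Taking $\kappa_1=\delta_{(a,b)}$ and $\kappa_2=\nu$ gives \eqref{RPF} with $\delta=(1-\gamma^2)^{1/n_0}$ and $C=2(1-\gamma^2)^{-1}$.
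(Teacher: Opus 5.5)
Your proposal is correct and follows the paper's proof. The Markov property comes from the quenched Markov property of $X_\omega$ combined with the Markov property of $Y$; the paper's ``conditioning on $(\omega_k,x_k)_{k<n}$'' hides exactly the conditional-independence point you flag. The minorization comes from writing $K^{n_0}G(a,b)$ as an average over environment paths of composed $R$-kernels, applying the first Doeblin bound pathwise to get $\gamma Q^{n_0}h(a)$ with $h(a')=m_1(G(a',\cdot))$, and then applying the second bound. The only difference is that you spell out the standard Dobrushin contraction argument for existence and uniqueness of $\nu$ and for the uniform geometric bound, which the paper simply asserts as a consequence.
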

\begin{proof}
We have 
$$
\mathbb E_\kappa[G(\pi_n)H((\pi_k)_{k<n})]=\int\kappa_\omega (G(\omega_n,X_{\omega,n})H((\omega_k,X_{\omega,k})_{k<n}))d\mu(\omega)
$$
$$
=\int\kappa_\omega (R_{\omega_{n-1}}G(\omega_n,\cdot)(X_{\omega,n-1})H((\omega_k,X_{\omega,k})_{k<n}))d\mu(\omega)
$$
$$
=\int \left(R_{\omega_{n-1}}G(\omega_{n},\cdot)(x_{n-1})\right)H((\omega_k,x_k)_{k<n})d\kappa(\omega,x).
$$
Now the result follows by conditioning on $(\omega_k,x_k)_{k<n}$.

To prove the result about the Doeblin condition, note that
$$
K^nG(a,b)=\int H(y_1,...,y_n)Q(a,dy_1)Q(y_1,dy_2)\ldots Q(y_{n-1},dy_n)   
$$
where 
$$
 H(y_1,...,y_n)=\int G(y_n,z_n)R_{a}(b,dz_1)R_{y_1}(z_1,dz_2)\ldots R_{y_{n-1}}(z_{n-1},dz_n).
$$
Now, if $G$ is non-negative then under the Doeblin condition for $R$ we have
$$
 H(y_1,...,y_{n_0})\geq\gamma m_1(G(y_{n_0},\cdot)):=h(y_{n_0})
$$
and so under the Doeblin condition for $Q$,
$$
K^{n_0}G(a,b)\geq \gamma K^{n_0} h(a,b)\geq \gamma^2m_2(h)=\gamma^2\int G d(m_1\times m_2).
$$
\end{proof}
Note that \eqref{RPF} is sufficient to verify the conditions of Theorem \ref{BE} with bounded functions $A$ (see \cite{HH}). Moreover, by applying the results in \cite{HH}, the inequality \eqref{RPF} also yields that the non-lattice local CLT holds when $\widehat{A}$ is aperiodic (in the sense of \cite{HH}) and for integer valued functions $A$ the lattice local CLT holds when $\widehat{A}$ is not not reducible to an $h\mathbb Z$-valued function with $h>\pi$.

\begin{remark}
Although we do not need that in the paper, let us note that 
if also the upper Doeblin condition holds:
$$
R_{\omega_{n_0-1}}\circ\cdots \circ R_{\omega_0}(\cdot)\leq C m_1(\cdot)
$$
and
$$
Q^{n_0}(\cdot)\leq C m_2(\cdot)
$$
for some $C>0$ then 
$$
K^{n_0}\leq C^2(m_1\times m_2).
$$
Thus, also $K$ satisfies the two sided Doeblin condition.    
\end{remark}

\bibliographystyle{alpha}
\tocless\bibliography{Elphi}

\Addresses

\end{document}